\documentclass[11pt,reqno]{amsart}
\usepackage[T1]{fontenc}
\usepackage[utf8]{inputenc}
\usepackage{newtxtext}
\usepackage{amsmath,amssymb,mathtools,newtxmath}
\usepackage[margin=1in]{geometry}
\usepackage{microtype,enumitem,algorithm,algpseudocode,needspace,booktabs,array,tabularx}
\usepackage[section]{placeins}
\usepackage{graphicx}
\usepackage{xcolor}
\usepackage{hyperref}
\IfFileExists{bookmark.sty}{\usepackage{bookmark}}{}
\hypersetup{pdftitle={Exact construction of quasi-uniform designs over polytopal domains},pdfauthor={S. Yang and T. Goda}}
\setlist[enumerate]{leftmargin=2em,itemsep=.2em,topsep=.3em}
\setlist[itemize]{leftmargin=1.6em,itemsep=.1em,topsep=.2em}
\allowdisplaybreaks[1]
\newtheorem{theorem}{Theorem}[section]
\newtheorem{lemma}[theorem]{Lemma}
\newtheorem{proposition}[theorem]{Proposition}
\newtheorem{corollary}[theorem]{Corollary}
\theoremstyle{definition}

\newtheorem{problem}{Problem}
\theoremstyle{remark}

\newtheorem{example}[theorem]{Example}
\newcommand{\R}{\mathbb R}
\newcommand{\Q}{\mathbb Q}
\newcommand{\Z}{\mathbb Z}
\newcommand{\K}{\mathcal K}
\newcommand{\Far}{\mathcal F}
\newcommand{\dist}{\operatorname{dist}}
\newcommand{\vol}{\operatorname{vol}}

\newcommand{\Vertx}{\operatorname{Vert}}
\newcommand{\lexmin}{\operatorname{lexmin}}
\newcommand{\argmax}{\operatorname*{arg\,max}}
\newcommand{\bl}{\operatorname{bl}}
\newcommand{\height}{\operatorname{ht}}

\newcommand{\LEFPS}{\mathrm{LEFPS}}
\newcommand{\AAS}{\mathrm{AAS}}

\newcommand{\conv}{\operatorname{conv}}

\newcommand{\doi}[1]{\href{https://doi.org/#1}{\nolinkurl{#1}}}

\mathtoolsset{showonlyrefs}
\allowdisplaybreaks

\numberwithin{equation}{section}

\title[Quasi-uniform designs over polytopal domains]{Exact construction of quasi-uniform designs over polytopal domains}
\author{Songlin Yang}
\address{Graduate School of Engineering, The University of Tokyo,
7-3-1 Hongo, Bunkyo-ku, Tokyo 113-8656, Japan}
\email{ysl@g.ecc.u-tokyo.ac.jp}
\author{Takashi Goda}
\address{Graduate School of Information Science and Technology, The University of Tokyo,
7-3-1 Hongo, Bunkyo-ku, Tokyo 113-8656, Japan}
\email{takashigoda@g.ecc.u-tokyo.ac.jp}
\thanks{The second author acknowledges the support provided through ChatGPT for academic researchers (OpenAI) in revising this manuscript, particularly in improving clarity and readability and reducing repetition.}

\date{\today}
\subjclass[2020]{Primary 65D18; Secondary 52C17, 65Y20, 68U05}
\keywords{quasi-uniform designs, farthest-point sampling, restricted Voronoi diagrams, exact geometric computation, bit complexity}

\begin{document}

\begin{abstract}
We develop two sampling algorithms for constructing extensible designs with mesh ratio at most two on finite unions of full-dimensional rational convex polytopes. Starting from a rational initial point, both algorithms produce rational coordinates and perform the geometric comparisons governing point selection exactly. This ensures that distance ties and geometric degeneracies are handled without rounding ambiguity, preserving the prescribed selection rules and mesh-ratio guarantee at every stage. The first algorithm, Local Exact Farthest-Point Sampling (LEFPS), implements a prescribed farthest-point rule by local updates of the restricted Voronoi diagram. The second algorithm, Arithmetic-Aware Admissible Sampling (AAS), minimizes the representation length of reduced rational coordinates over admissible fragment vertices and edge midpoints. Since the cost of exact arithmetic depends on coordinate lengths, we analyze both operation counts and bit complexity. For fixed dimension and input, each sampling rule admits an $O(N\log N)$ unit-cost implementation and a separate implementation with $\widetilde O(L_N)$ bit complexity, where $L_N$ is the total length of the output coordinates. On rational interval covers, every admissible sequence from the vertex-midpoint candidate set has logarithmic coordinate height. We also identify the ordered sequences explicitly for LEFPS on interval unions and four-dimensional coordinate-order domains, and for AAS on a rectangle and a nonconvex cross. These descriptions yield direct generation with $O(N)$ unit-cost work and optimal $\Theta(N\log N)$ bit work, including exact quality measures at every prefix. Numerical experiments compare local Voronoi maintenance, coordinate growth, and direct generation.
\end{abstract}
\maketitle

\section{Introduction}\label{sec:intro}
Space-filling designs are used in computer experiments and function approximation to distribute sampling points over a prescribed domain. Two geometric properties are particularly desirable: the points should cover the domain well and remain sufficiently separated from one another. These properties are closely related to approximation accuracy and numerical stability. For example, in kernel interpolation, the covering radius enters error estimates, while the separation radius enters stability estimates under suitable assumptions on the kernel and the domain; see, e.g., \cite{PZ,WenzelEtAl}. When the evaluation budget is not fixed in advance, the design should also be extensible: new points should be added without discarding existing points or their function values, while preserving geometric quality at every stage.

Let $\Omega\subset\R^d$ be a compact set of positive volume, equipped with the Euclidean norm $\|\cdot\|$. For an infinite sequence of distinct points $(x_n)_{n\ge1}\subset\Omega$, let $P_N=\{x_1,\ldots,x_N\}$ denote the set of the first $N$ points. For a nonempty finite set $P\subset\Omega$, we write $d_P(y)=\min_{x\in P}\|y-x\|$. The \emph{covering radius} and \emph{separation radius} of $P_N$ are defined by
\[
 h(P_N)=\max_{y\in\Omega}d_{P_N}(y),\qquad
 q(P_N)=\frac12\min_{1\le i<j\le N}\|x_i-x_j\|,
\]
respectively, where the latter is defined for $N\ge2$. We suppress the dependence of $h(P_N)$ on the fixed domain $\Omega$, and write $\delta(P_N)=2q(P_N)$ for the minimum pairwise distance. Their ratio $\rho(P_N)=h(P_N)/q(P_N)$ is the \emph{mesh ratio}. The designs are \emph{quasi-uniform} if $\sup_{N\ge2}\rho(P_N)<\infty$. For a fixed domain, this condition implies that both radii have the order $N^{-1/d}$; see \cite[Corollary~2.1]{PZ}. Thus, quasi-uniformity provides a common scale for coverage and separation, uniformly in $N$.

\subsection{Farthest-point sampling and existing constructions}

\emph{Farthest-point sampling}, also called greedy packing, selects a point farthest from the current design; related constructions appear in clustering and progressive image sampling \cite{Gonzalez,Eldar}. From $x_1\in\Omega$, we use the deterministic rule
\begin{equation}\label{eq:canonical}
 x_{n+1}=\lexmin\argmax_{y\in\Omega}d_{P_n}(y),\qquad n\ge1.
\end{equation}
Here, lexicographic order in the given coordinates resolves ties. The insertion distances are non-increasing, so $2q(P_N)=h(P_{N-1})\ge h(P_N)$ and $\rho(P_N)\le2$. This constant is optimal: every infinite nested sequence on a compact set of positive volume has $\limsup_N\rho(P_N)\ge2$, as shown in \cite[Theorems~1.1 and~3.2]{PZ}.

The difficulty of farthest-point sampling is maximizing $d_{P_n}$ over the whole domain at each insertion. On polytopal domains, restricted Voronoi diagrams provide a finite set of candidate points. Eldar et al.\ \cite[Sections~III--IV]{Eldar} obtain an $O(N\log N)$ implementation on rectangles under their initialization and geometric assumptions. Xu and Goda~\cite{XuGoda} develop a Voronoi-guided construction on arbitrary triangles and discuss the same operation count. These results motivate local Voronoi updates on more general polytopal domains.

An alternative is to apply farthest-point sampling to a finite random candidate set. Under suitable domain regularity, Goda and Xu~\cite{GodaXuRandom} prove that $N\log N$ is the sharp order of the number of independent candidates needed to select $N$ points with bounded mesh ratio with high probability, and give an extensible randomized construction. Here we use an explicit polyhedral description of the domain to obtain deterministic constructions with mesh ratio at most two.

\subsection{Exact construction and arithmetic cost}

A geometric selection rule does not by itself specify how its decisions can be carried out reliably on a computer. In \eqref{eq:canonical}, one must compare candidate distances, detect ties, and update the Voronoi structure, including in degenerate configurations. Errors in these decisions can change the selected point or invalidate the maintained geometry. The importance of exact geometric predicates for avoiding such inconsistencies is well established in computational geometry; see, for example, \cite{Shewchuk}. The aim of this paper is to implement the specified selection rules exactly and retain their mesh-ratio guarantee at every stage.

We consider domains
\[ \Omega=\bigcup_{\sigma\in\K}\sigma, \] 
where $\K$ is a finite family of full-dimensional compact rational convex polytopes, each given by rational linear inequalities. The cells may overlap, and the union may be nonconvex or disconnected. Distances are measured in the ambient Euclidean space. Starting from a rational point, this setting permits exact construction using rational arithmetic. Indeed, the bisector of two rational sites $p$ and $q$ has equation
\[ 2(q-p)\cdot y = \|q\|^2-\|p\|^2.\]
Restricted Voronoi vertices therefore have rational coordinates, as do edge midpoints, and distances can be compared through their rational squares. In this paper, \emph{exact construction} means that the output coordinates are represented as reduced rational numbers and the geometric comparisons governing point selection are performed exactly.

Exactness, however, does not make arithmetic operations uniformly inexpensive. Their cost depends on the lengths of the numerators and denominators, which may grow as points are inserted. An operation count that assigns unit cost to every rational operation can therefore conceal a substantial part of the computational work. This also suggests considering coordinate length when choosing a new point: the mesh-ratio bound may allow an insertion with shorter coordinates than a farthest point.

We accordingly analyze both unit-cost operation counts and bit complexity. Let $\lambda(x)$ denote the representation length of the reduced rational coordinates of $x$, as defined precisely in \eqref{eq:output-size}, and put
\[
 L_N=\sum_{n=1}^N\lambda(x_n).
\]
The total output length $L_N$ is a lower bound, up to a constant factor, on the bit complexity of any algorithm that explicitly writes the reduced coordinates. A bound close to this output cost measures the efficiency of executing a selection rule; a bound in terms of $N$ alone additionally requires control of coordinate growth. Throughout the analysis, the dimension, the given polytopal cover, and the rational initial point are fixed as $N$ grows.

\subsection{Main contributions}

We develop two exact sampling algorithms and analyze their geometric and arithmetic costs. The main contributions are as follows.

\begin{enumerate}[label=(\roman*)]
\item \textbf{Local Exact Farthest-Point Sampling (LEFPS).}
We implement \eqref{eq:canonical} by updating only affected restricted Voronoi fragments, the intersections of Voronoi cells with input polytopes. The mesh-ratio bound limits each update to a bounded number of neighboring sites. The construction handles degeneracies and overlapping input cells, and its ordered output is independent of the chosen cover.

\item \textbf{Arithmetic-Aware Admissible Sampling (AAS).}
For a singleton or a design with $h(P)\le2q(P)$, an insertion $x\in\Omega\setminus P$ preserves this bound exactly when
\begin{equation}\label{eq:intro-admissible}
 h(P\cup\{x\})\le d_P(x).
\end{equation}
We call such an insertion \emph{admissible}. AAS minimizes $\lambda(x)$, with lexicographic ties, over admissible fragment vertices and edge midpoints. We give an exact covering test and show that the minimum over the entire candidate set can be found by testing only boundedly many local candidates. The candidate set, and hence the output, may depend on the cover. The minimization concerns the current candidates; it does not imply a smaller total output length than LEFPS.

\item \textbf{Execution cost and coordinate growth.}
Each sampling rule admits an $O(N\log N)$ unit-cost implementation and a separate implementation with $\widetilde O(L_N)$ bit complexity, including reduced point output. Here $\widetilde O$ suppresses logarithmic factors in $N$ and $L_N$. The bit bound accounts for local arithmetic, repeated use of coordinates, and global ordering operations. On every fixed rational interval cover, every admissible sequence from the vertex--midpoint candidate set satisfies $\lambda(x_n)=O(\log(n+2))$ and $L_N=\Theta(N\log N)$. Both algorithms consequently admit $O(N\log^2N)$ bit implementations on these domains. Coordinate growth on general higher-dimensional domains remains open.

\item \textbf{Direct Stream Execution (DSE).}
For particular inputs, we identify the ordered point sequences explicitly. These descriptions reproduce the same points in the same order with $O(N)$ unit-cost work and optimal $\Theta(N\log N)$ bit work, including exact squared covering and separation radii. The bounds hold at every stopping point, including setup and preparation costs. We treat LEFPS on rational interval unions from any rational singleton, and on four-dimensional coordinate-order domains \eqref{eq:orderdomain} from the origin, including rational similarity images with the corresponding initial points. For AAS, we treat the rectangle and nonconvex cross in \eqref{eq:rectangle-input} and \eqref{eq:cross} with their specified covers and initial points. The planar and four-dimensional constructions build on the square and four-cube refinements of Pronzato and Zhigljavsky~\cite[Theorems~3.3 and~3.5]{PZ}; we establish agreement with our selection rules and the generation cost at every prefix.
\end{enumerate}

The rest of the paper is organized as follows. Section~\ref{sec:background} introduces notation and computational conventions. Sections~\ref{sec:geometry} and~\ref{sec:admissible} present LEFPS and AAS, respectively. Section~\ref{sec:arithmetic} analyzes bit complexity and coordinate growth, and Section~\ref{sec:direct} develops direct generation. Section~\ref{sec:experiments} conducts numerical experiments, comparing the costs of local Voronoi maintenance, the two selection rules, and direct generation. Section~\ref{sec:discussion} discusses the remaining questions. Appendix~\ref{app:direct-proofs} contains detailed proofs for the planar AAS constructions, including coordinate-length comparisons, verification of the initial sequences, and the effect of rational scaling on the refinement order.

\section{Preliminaries}\label{sec:background}

\subsection{Domains and geometric scales}\label{subsec:domains-scales}

We consider the rational polytopal domains introduced in Section~\ref{sec:intro}, represented as
\begin{equation}\label{eq:cover}
 \Omega=\bigcup_{\sigma\in\K}\sigma\subset\R^d,
\end{equation}
where $d\ge1$ and $\K$ is a nonempty finite family of full-dimensional compact convex polytopes given by rational affine inequalities. These \emph{input cells} may overlap without meeting face to face. All distances, including those between components, are ambient Euclidean distances.

The fixed input $I$ consists of these inequalities and the rational seed $x_1\in\Omega \cap \Q^d$. Complexity constants may depend on $I$ and $d$, but not on $N$. One-time exact preprocessing removes redundant inequalities, computes cell vertices, and finds a rational enclosing box.

We use $h(P)$, $q(P)$, and $\delta(P)=2q(P)$ as in Section~\ref{sec:intro}. A design with at least two points is called \emph{certified} if $h(P)\le\delta(P)$, equivalently $\rho(P)\le2$. For induction, set $q(\{x_1\})=\delta(\{x_1\})=+\infty$ without assigning a mesh ratio to a singleton. Write $h_n=h(P_n)$ and $r_{n+1}=d_{P_n}(x_{n+1})$ for the \emph{insertion radius}. Adding a point cannot increase $h_n$, and
\[
 \delta(P_{n+1})=\min\{\delta(P_n),r_{n+1}\}.
\]
For farthest-point sampling, $r_{n+1}=h_n$ is nonincreasing, so
\begin{equation}\label{eq:greedy-separation}
 2q(P_n)=\delta(P_n)=h_{n-1}\ge h_n,\qquad n\ge2.
\end{equation}

Throughout this paper, $B(x,r)$ denotes the closed Euclidean ball with center $x$ and radius $r$. Let $v_d=\vol(B(0,1))$. For every $n$-point design with $n\ge2$,
\begin{equation}\label{eq:volume-scales}
 h(P_n)\ge\left(\frac{\vol(\Omega)}{nv_d}\right)^{1/d},
 \qquad
 \delta(P_n)\le C_\Omega n^{-1/d}.
\end{equation}
The first bound follows by covering $\Omega$ with $n$ balls of radius $h(P_n)$. For the second, the open balls of radius $q(P_n)$ centered at the design points are disjoint and lie in the Minkowski sum of the sets $\Omega$ and $B(0,q(P_n))$. Since $q(P_n)\le \operatorname{diam}(\Omega)/2$ for $n\ge2$, this enlarged domain has uniformly bounded volume, giving the second bound by volume comparison. For certified sequences, the two bounds and $h(P_n)\le2q(P_n)$ yield $h_n\asymp q(P_n)\asymp n^{-1/d}$, with constants independent of $n$.

\subsection{Rational coordinates and computational models}
\label{subsec:input-output}\label{subsec:cost-models}

For an integer $a\ne0$, let $\bl(a)=\lfloor\log_2|a|\rfloor+1$, and set $\bl(0)=0$. For a reduced rational $u=a/b$ with $b>0$, define
\[
 \height(u)=\max\{\bl(a),\bl(b)\},\qquad
 \height(x)=\max_{1\le j\le d}\height(x_j)\quad(x\in\Q^d).
\]
If $x_{ij}=a_{ij}/b_{ij}$ is reduced with $b_{ij}>0$, set
\begin{equation}\label{eq:output-size}
 \lambda_i=\lambda(x_i)
 =1+\sum_{j=1}^d\bigl(\bl(a_{ij})+\bl(b_{ij})\bigr),
 \qquad L_N=\sum_{i=1}^N\lambda_i.
\end{equation}
Thus $\lambda(x)\asymp_d1+\height(x)$. Including signs and delimiters between integers changes the encoding length by at most a constant factor, so $L_N$ measures the size of the explicit reduced-coordinate output. AAS uses the precise value of $\lambda$ in \eqref{eq:output-size} to compare candidates, independently of the output format. Superscripts on $L_N$ distinguish the generated sequences when needed.

In the \emph{unit-cost model}, each exact rational arithmetic operation or comparison has unit cost, independently of operand lengths. Accessing or updating a record containing a bounded number of rational numbers, integers, or pointers also has unit cost. This count is also called \emph{structural work}. Exact integer division is available for spatial indexing, and each reduced rational record stores the binary lengths of its numerator and denominator. In the \emph{bit-complexity model}, costs depend on data lengths and include initialization, arithmetic, reduction, encoding, data access, and output. The arithmetic and data-access bounds are specified in Section~\ref{sec:arithmetic}. For LEFPS and AAS, the two complexity bounds may require different implementations of the same selection rule; these produce identical ordered points.

An algorithm is \emph{autonomous} if it uses only the fixed input and generated history, outputs each point after insertion, and can stop after any $P_N$ without inserting $x_{N+1}$ or using a prescribed final $N$ to define the sequence. Points retain their insertion indices as identifiers; lexicographic ties refer to the original coordinates. Unless stated otherwise, the LEFPS and AAS bounds include explicit reduced-coordinate output and one final export of the maintained state sufficient to resume the construction. They do not include repeated exports of the geometry or output of $h_n^2$ and $\delta(P_n)^2$ at every insertion. The output conventions for direct generation are specified in Section~\ref{sec:direct}.

\subsection{Restricted Voronoi diagrams and locality}
\label{subsec:voronoi-witnesses}\label{subsec:local-geometry}

Let $P\subset\Omega$ be nonempty and finite. Its points are also called \emph{sites}. For distinct $p,q\in P$, define
\begin{equation}\label{eq:halfspace}
 \begin{aligned}
  H_{p\preceq q}
   &=\{y\in\R^d:2(q-p)\cdot y\le\|q\|^2-\|p\|^2\},\\
  V_p(P)&=\bigcap_{q\in P\setminus\{p\}}H_{p\preceq q},
  \qquad F_{\sigma,p}(P)=\sigma\cap V_p(P).
 \end{aligned}
\end{equation}
For a singleton, $V_p(P)=\R^d$. A nonempty $F_{\sigma,p}(P)$ is called a \emph{Voronoi fragment}, with \emph{owner} $p$, and is \emph{essential} if it has dimension $d$. We omit $P$ when it is clear from the context. Fragments are stored separately for each pair $(\sigma,p)$, with their defining constraints and face incidences; a shared vertex may therefore have several stored occurrences. An \emph{input facet} lies in a facet hyperplane of the fragment's own input cell $\sigma$, even if this hyperplane is internal to the cover. All other facets are \emph{non-input facets}.

The following theorem extends the finite-candidate characterization for triangles in \cite[Lemma~2.8]{XuGoda} to finite polytopal covers in arbitrary dimension.

\begin{theorem}[Farthest points among fragment vertices]\label{thm:witness}
Every point of each input cell $\sigma$ lies in an essential fragment $F_{\sigma,p}$. Moreover,
\begin{equation}\label{eq:complete-far}
 \Far(P):=\argmax_{y\in\Omega}d_P(y)
 =\bigcup_{\substack{\sigma\in\K,\ p\in P\\\dim F_{\sigma,p}=d}}
   \{v\in\Vertx(F_{\sigma,p}):\|v-p\|=h(P)\}.
\end{equation}
In particular, $\Far(P)$ is finite. For $t\ge0$, the inequality $h(P)^2\le t$ holds if and only if $\|v-p\|^2\le t$ at every vertex of every essential fragment with owner $p$.
\end{theorem}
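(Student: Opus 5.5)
The plan has three parts: a covering argument for the first claim, a convexity argument for \eqref{eq:complete-far}, and a short consequence for the final equivalence.

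\emph{Step 1: essential fragments cover each cell.} Fix $\sigma\in\K$. The Voronoi cells $V_p(P)$, $p\in P$, are closed polyhedra covering $\R^d$, since every $y$ has a nearest site. Hence $\sigma=\bigcup_p F_{\sigma,p}$, a finite union of closed convex sets. Consider the union $U$ of the nonessential fragments, those of dimension $<d$. It is a finite union of sets contained in proper affine subspaces, so it is closed and nowhere dense. Therefore the union $E$ of the essential fragments contains the dense set $\sigma\setminus U$, because $\sigma$ is full-dimensional and $\sigma\setminus U$ is dense in $\sigma$. Since $E$ is closed, $E=\sigma$.

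\emph{Step 2: the maximizers are fragment vertices.} Fix an essential fragment $F=F_{\sigma,p}$. On $F$ we have $d_P(y)=\|y-p\|$ exactly, by the definition of $V_p$. The function $y\mapsto\|y-p\|^2$ is strictly convex, and $F$ is a compact polytope, the convex hull of $\Vertx(F)$. Therefore $\max_F\|y-p\|$ is attained on $\Vertx(F)$.

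Strict convexity gives more: every maximizer over $F$ is a vertex. Suppose $y\in F$ is not a vertex. Then $y$ lies in the relative interior of a segment $[a,b]\subset F$ with $a\neq b$, and $\|y-p\|^2<\max\{\|a-p\|^2,\|b-p\|^2\}$. So $y$ is not a maximizer over $F$.

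Now combine this with Step 1, $\Omega=\bigcup_{\sigma}\bigcup_{\text{essential}}F_{\sigma,p}$:
\[
h(P)=\max_{\sigma,p}\max_{v\in\Vertx(F_{\sigma,p})}\|v-p\|.
\]
For the inclusion $\subseteq$ in \eqref{eq:complete-far}, let $y\in\Far(P)$. Then $y$ lies in some essential fragment $F_{\sigma,p}$ by Step 1. It maximizes $\|\cdot-p\|$ on that fragment, since $d_P\le h(P)$ everywhere, so it is a vertex $v$ of $F_{\sigma,p}$ with $\|v-p\|=h(P)$.

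For the inclusion $\supseteq$, let $v\in\Vertx(F_{\sigma,p})$ with $\|v-p\|=h(P)$. Since $v\in V_p$, we get $d_P(v)=\|v-p\|=h(P)$, and $v\in\sigma\subseteq\Omega$. So $v\in\Far(P)$.

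Finiteness of $\Far(P)$ follows because there are finitely many pairs $(\sigma,p)$ and each polytope has finitely many vertices.

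\emph{Step 3: the final equivalence.} This is immediate from the displayed identity for $h(P)$, after squaring. It is also worth noting that $\|v-p\|=d_P(v)$ at each vertex of an essential fragment with owner $p$, so the test uses only owner distances.

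\emph{Main obstacle.} The only delicate point is Step 1, in degenerate or overlapping configurations. Lower-dimensional fragments can occur there, and one must justify discarding them. The closedness and density argument handles this. One must also check that $F_{\sigma,p}$ is bounded, hence a polytope with vertices; this holds because $\sigma$ is compact. Ties between sites need no special care, since both inclusions rely only on $d_P(v)=\|v-p\|$ for $v\in V_p$.
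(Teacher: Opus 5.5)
Your proof is correct and follows essentially the paper's argument. The convexity step and the converse inclusion match the paper: strict convexity of $\|\cdot-p\|^2$ forces every maximizer on an essential fragment to be a vertex. The only variation is in the covering claim, where you use that the union of lower-dimensional fragments is closed and nowhere dense, whereas the paper approximates $y$ by interior points avoiding all bisectors and extracts a subsequence with a common unique nearest site; both are the same density-plus-closedness idea.
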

\begin{proof}
For $y\in\sigma$, choose points in $\operatorname{int}(\sigma)$ converging to $y$ and avoiding all site bisectors. A subsequence has the same unique nearest site $p$. Each of its points has a neighborhood contained in $F_{\sigma,p}$, so this fragment is full-dimensional; closedness places $y$ in it.

On $F_{\sigma,p}$, we have $d_P(z)^2=\|z-p\|^2$. If a farthest point were not a vertex of an essential fragment containing it, it would be a nontrivial convex combination of two distinct fragment points. Strict convexity of the squared norm would then contradict maximality. Conversely, each vertex on the right-hand side of \eqref{eq:complete-far} is a farthest point. Finiteness and the threshold statement follow.
\end{proof}

The theorem requires no general-position assumption, i.e., the theorem remains valid when more than $d$ constraint hyperplanes meet at a vertex or when Voronoi boundaries coincide with input-cell facets. With rational sites, all fragments and their vertices are rational, so covering radii can be compared exactly through squared owner--vertex distances.

For an insertion $x\in\Omega\setminus P$, put $P^+=P\cup\{x\}$ and $H=h(P)$. Then
\begin{equation}\label{eq:locality}
  V_x(P^+)\cap\Omega \subset B(x,H),\qquad \{p\in P:V_p(P)\cap V_x(P^+)\cap\Omega\ne\emptyset \}
    \subset P\cap B(x,2H).
\end{equation}
Indeed, on the new cell, $\|y-x\|\le d_P(y)\le H$. If $p$ is an old nearest site at such a point, the triangle inequality gives $\|p-x\|\le2H$. The second set contains every old owner whose fragments can change, including equality contacts. An upper bound for $H$ may replace $H$ when searching for these owners.

\begin{lemma}[Bounded local geometric complexity]\label{lem:compact}
Suppose $h(P)\le s$ and $\delta(P)\ge cs$ for $s>0$ and fixed $c>0$. Every essential fragment has a bounded number of facets, vertices, and face incidences. For fixed $A>0$, a ball of radius $As$ contains a bounded number of sites and fragment-vertex occurrences, and intersects fragments belonging to a bounded number of owners. An insertion with radius at least $cs$ replaces only a bounded number of geometric records. The bounds depend only on $\K,d,c,A$, not on $P$ or $s$. The total number of essential fragments is between $|P|$ and $|\K||P|$.
\end{lemma}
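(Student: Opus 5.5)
The plan is a packing argument. The key preliminary fact: for any $y\in\Omega$ with nearest site $p$, we have $\|y-p\|\le h(P)\le s$. So every fragment with owner $p$ lies in $B(p,s)$. In particular, every fragment vertex $v$ of $F_{\sigma,p}$ satisfies $\|v-p\|\le s$, because $v\in F_{\sigma,p}\subset\sigma\subset\Omega$ and $d_P(v)=\|v-p\|$.

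\textbf{Step 1: counting sites in balls.} The balls $B(p,cs/2)$ for $p\in P$ have pairwise disjoint interiors. Those with centers in $B(z,As)$ lie in $B(z,(A+c/2)s)$. Comparing volumes bounds the number of such sites by $\bigl((2A+c)/c\bigr)^d$. This depends only on $A,c,d$.

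\textbf{Step 2: local combinatorics of a fragment.} Fix an essential fragment $F_{\sigma,p}$. Its facets come from two sources:
\begin{itemize}
\item input facets of $\sigma$, of which there are at most the number of defining inequalities of $\sigma$, fixed by $\K$;
\item bisector constraints $H_{p\preceq q}$ that are active somewhere on $F_{\sigma,p}$.
\end{itemize}
If a bisector with $q$ is active at a point $y$ of the fragment, then $\|y-q\|=\|y-p\|\le s$, hence $\|p-q\|\le 2s$. So the relevant $q$ lie in $B(p,2s)$, and by Step 1 there are boundedly many. Thus $F_{\sigma,p}$ equals $\sigma$ intersected with a bounded number $m$ of halfspaces, even though other constraints are redundant. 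A polytope in $\R^d$ with at most $m$ facet-defining inequalities has at most $\binom{m}{d}$ vertices when counted as vertex--hyperplane incidences. The total number of faces is at most $2^m$, so face incidences are bounded as well. Degeneracy is harmless here, since we only bound counts and never need general position.

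\textbf{Step 3: balls, owners, and vertex occurrences.} Suppose a fragment with owner $p$ meets $B(z,As)$. Then $p\in B(z,(A+1)s)$, since the fragment lies in $B(p,s)$. By Step 1, only boundedly many owners occur. Each owner has at most $|\K|$ fragments, each with boundedly many vertices by Step 2. Every vertex occurrence inside the ball belongs to a fragment meeting it, so vertex occurrences are bounded too.

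\textbf{Step 4: insertions.} Let $x$ be inserted with radius $\ge cs$. Then $\delta(P^+)\ge cs$ and $h(P^+)\le s$, so the hypotheses persist with the same $s$ and $c$. By \eqref{eq:locality}, the only changed records are the following:
\begin{itemize}
\item fragments of owners in $P\cap B(x,2H)$ with $H\le s$;
\item the new fragments of $x$.
\end{itemize}
Steps 2–3 applied to both $P$ and $P^+$ bound the number of old records removed and new records created.

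\textbf{Step 5: total number of essential fragments.} For the upper bound, each pair $(\sigma,p)$ yields at most one fragment, giving at most $|\K||P|$. For the lower bound, each site $p$ lies in some $\sigma$. A small ball around $p$ contains no other site's region, because $\delta>0$ makes $p$ the unique nearest site near $p$. Since $\sigma$ is a full-dimensional convex body containing $p$, the intersection of $\sigma$ with $\operatorname{int}V_p$ near $p$ has nonempty interior. Hence $F_{\sigma,p}$ is essential, giving at least $|P|$ fragments.

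\textbf{Main obstacle.} The step requiring most care is Step 2: justifying that only nearby bisectors matter, even in degenerate configurations. I would handle this by defining $F_{\sigma,p}$ equivalently using only the sites $q\in B(p,2s)$. The key claim is that the cell cut by these nearby constraints already lies within $B(p,s)$ on $\sigma$, so distant constraints are redundant. Proving this needs an argument, because the truncated cell might a priori be larger than the true one. To argue it, take a point $y$ of the truncated cell inside $\sigma$, and let $q$ be its true nearest site. Then $\|y-q\|\le s$. Suppose $\|y-p\|>s$. Following the segment from $p$ to $y$, find the first point where some other site becomes at least as near as $p$. At that point $z$ we have $\|z-p\|\le s$, so the competing site lies within $2s$ of $p$ and is one of the nearby sites. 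Its bisector then separates $y$ from $p$ by convexity, contradicting that $y$ lies in the truncated cell.
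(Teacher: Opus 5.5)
Your proof follows the paper's argument step for step. Fragments lie in $B(p,s)$, bisectors supporting facets come from sites within $2s$, and disjoint balls of radius $cs/2$ bound the sites and owners in $B(a,(A+1)s)$. Insertions are handled by \eqref{eq:locality} together with the persistence of $h\le s$ and $\delta\ge cs$. The lower bound $|P|$ comes from intersecting a full-dimensional cell with a small neighbourhood of $p$ in which $p$ is the unique nearest site.

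The only thing to fix is your ``main obstacle'' paragraph, which is unnecessary and, as written, incomplete.

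\emph{Why it is unnecessary.} An essential fragment is a full-dimensional polytope, so it equals the intersection of its facet halfspaces. You have already shown that each of these is an input inequality of $\sigma$ or some $H_{p\preceq q}$ with $\|p-q\|\le 2s$; this is exactly what the paper uses. Alternatively, every vertex is cut out by $d$ independent constraints active at it, and all such constraints are of this nearby type.

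\emph{Why it is incomplete.} The owner $p$ need not lie in $\sigma$: a fragment $F_{\sigma,p}$ may be essential with $p$ in another cell. On a nonconvex union, the segment from $p$ to $y$ can then leave $\Omega$. At your first tie point $z$ you no longer have $\|z-p\|=d_P(z)\le s$, so the competing site need not be within $2s$ of $p$.

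\emph{The repair.} Start the segment at a point $w\in F_{\sigma,p}$ instead of at $p$. By convexity $[w,y]\subset\sigma\subset\Omega$, so the exit point $z$ from $V_p$ satisfies $\|z-p\|=d_P(z)\le s$. The constraint violated just beyond $z$ is tight at $z$, so its site lies within $2s$ of $p$. By linearity along the segment, that constraint is also violated at $y$, which is the contradiction you want.
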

\begin{proof}
An owner-$p$ fragment lies in $B(p,s)$. If a site $q$ supports one of its Voronoi facets, a point on that facet is within $s$ of both sites, so $\|p-q\|\le2s$. The disjoint open balls of radius $cs/2$ centered at sites bound the number of such neighbors. Together with the fixed input facets, this bounds all face and incidence counts in fixed dimension.

If a fragment meets $B(a,As)$, its owner lies in $B(a,(A+1)s)$. The same packing argument bounds the number of these owners and hence their fragments and vertex occurrences. By \eqref{eq:locality}, only owners in $B(x,2s)$ can be affected by an insertion. Its radius assumption preserves separation at least $cs$, while the covering radius cannot increase, so the bounds apply both before and after the update. Finally, each site $p$ is uniquely nearest in a sufficiently small neighborhood of itself. Intersecting this neighborhood with any full-dimensional input cell containing $p$ shows that $p$ owns at least one essential fragment. Since it owns at most one fragment per input cell, the total lies between $|P|$ and $|\K||P|$.
\end{proof}

Whenever an essential fragment is updated or created, redundant site inequalities are discarded. The fragment is stored using the fixed input inequalities and only the nonredundant site inequalities supporting its current facets, together with its vertices and face incidences. Local polyhedral computations enumerate independent sets of $d$ constraint equations, solve the resulting rational systems, and test feasibility and incidences. This procedure also applies when the resulting polytope is lower-dimensional: at every vertex, the active constraint normals span $\R^d$, since otherwise a short feasible segment through that point would contradict extremality. Reused vertex coordinates retain a valid basis of current supporting equations. See \cite{Aurenhammer,Shewchuk} for background on Voronoi structures and geometric predicates.

Lemma~\ref{lem:compact} bounds the number of operations in each local geometric computation; finding the relevant records and maintaining their global order require the data structures in Sections~\ref{sec:geometry} and~\ref{sec:admissible}. The constants need not depend polynomially on $d$; for norm maximization in variable dimension, see \cite{BodlaenderEtAl,KnauerEtAl}.

\section{Local Exact Farthest-Point Sampling}\label{sec:geometry}

Our first algorithm, LEFPS, selects the lexicographically first farthest point by maintaining the essential Voronoi fragments and their maximizing vertices. We first show how to reconstruct the fragments affected by an insertion, then use this update to maintain the farthest point. The reconstruction applies to any insertion and is also used by our second algorithm, AAS.

\subsection{Local updates of the restricted Voronoi diagram}
\label{subsec:local-reconstruction}

For an insertion $x\in\Omega\setminus P$, put $P^+=P\cup\{x\}$ and
\[
 \ell_{x,p}(y)=\|y-p\|^2-\|y-x\|^2
 =2(x-p)\cdot y+\|p\|^2-\|x\|^2.
\]
An old essential fragment $F=F_{\sigma,p}(P)$ retains its part in $H_{p\preceq x}=\{\ell_{x,p}\le0\}$ and transfers its part in $H_{x\preceq p}=\{\ell_{x,p}\ge0\}$ to $x$. Compute $m_F=\min_F\ell_{x,p}$ and $M_F=\max_F\ell_{x,p}$ from its vertices. Since $F$ is full-dimensional and $x\ne p$, $m_F<M_F$, and
\begin{equation}\label{eq:donor-signs}
\begin{array}{ll}
M_F\le0:&F\text{ is retained},\\
m_F\ge0:&\text{the retained part is empty or lower-dimensional},\\
m_F<0<M_F:&\text{both parts are full-dimensional}.
\end{array}
\end{equation}
An old essential fragment is a \emph{donor} if it transfers positive volume to $x$, and a \emph{proper donor} if it also retains positive volume. For each input cell, define
\[
\begin{aligned}
D_\sigma(x)&=\{p:\dim F_{\sigma,p}=d,\ M_{F_{\sigma,p}}>0\},\\
J_\sigma(x)&=\{p:\dim F_{\sigma,p}=d,\ m_{F_{\sigma,p}}<0<M_{F_{\sigma,p}}\},\\
\Pi_{x,p}&=\{\ell_{x,p}=0\},\qquad
I_{\sigma,p}=F_{\sigma,p}(P)\cap\Pi_{x,p}.
\end{aligned}
\]
Here $D_\sigma(x)$ and $J_\sigma(x)$ contain the owners of donors and proper donors, respectively. The interfaces of the proper donors determine the new fragment.

\begin{theorem}[Reconstruction from the affected fragments]
\label{thm:reconstruct}
For every old essential fragment, the updated fragment is $F_{\sigma,p}(P)\cap H_{p\preceq x}$. The new fragment $C=F_{\sigma,x}(P^+)$ is essential if and only if $D_\sigma(x)\ne \emptyset$. Whenever it is essential,
\begin{align}
 C&=\sigma\cap\bigcap_{p\in D_\sigma(x)}H_{x\preceq p}
   =\sigma\cap\bigcap_{p\in J_\sigma(x)}H_{x\preceq p},
      \label{eq:donor-reconstruct}\\
 \Vertx(C)&=(\Vertx(\sigma)\cap C)
       \cup\bigcup_{p\in J_\sigma(x)}\Vertx(I_{\sigma,p}).
      \label{eq:vertex-reuse}
\end{align}
The non-input facets of $C$ are precisely the interfaces $I_{\sigma,p}$ with $p\in J_\sigma(x)$. Consequently, none of the proper-donor inequalities in the second description of \eqref{eq:donor-reconstruct} can be omitted while keeping the input inequalities. The owner of every donor fragment lies in $B(x,2h(P))$.
\end{theorem}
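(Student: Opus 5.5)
The plan is to work directly from the definitions \eqref{eq:halfspace}, using only that $V_p(P^+)=V_p(P)\cap H_{p\preceq x}$ and $V_x(P^+)=\bigcap_{p\in P}H_{x\preceq p}$. The first claim is then immediate: $F_{\sigma,p}(P^+)=\sigma\cap V_p(P)\cap H_{p\preceq x}=F_{\sigma,p}(P)\cap H_{p\preceq x}$.

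For the new fragment $C=\sigma\cap V_x(P^+)$, I will first show $C=\bigcup_{p}\bigl(F_{\sigma,p}(P)\cap H_{x\preceq p}\bigr)$, with the union over owners of essential fragments. This follows from Theorem~\ref{thm:witness}: every $y\in C$ lies in some essential $F_{\sigma,p}(P)$, and then $\|y-x\|\le\|y-p\|$. Conversely, a point of $F_{\sigma,p}(P)\cap H_{x\preceq p}$ satisfies $\|y-x\|\le\|y-p\|=d_P(y)$, so it lies in $C$.

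Hence $C$ is full-dimensional if and only if some piece $F\cap\{\ell_{x,p}\ge 0\}$ is full-dimensional. Since $F$ is full-dimensional and $\ell_{x,p}$ is a nonconstant affine function, this happens exactly when $M_F>0$, that is, when $D_\sigma(x)\neq\emptyset$.

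For \eqref{eq:donor-reconstruct}, the inclusion $C\subset\sigma\cap\bigcap_{D}H_{x\preceq p}\supset\sigma\cap\bigcap_{J}H_{x\preceq p}$ is trivial because $J\subset D\subset P$. I will therefore prove that $C':=\sigma\cap\bigcap_{p\in J_\sigma(x)}H_{x\preceq p}$ is contained in $C$, by a boundary-crossing argument. The set $C\cap\sigma$ is closed and relatively convex, and by the decomposition above it equals the union of donor pieces.

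Suppose $z\in C'\setminus C$ and pick $y_0$ in the interior of $C$ away from all bisectors. Move along the segment from $y_0$ to $z$, which stays in $\sigma$ by convexity. The segment leaves $C$ at some point $w$, and just beyond $w$ it enters the interior of an essential fragment of some owner $p$ (general position of $y_0$ lets us perturb to avoid lower-dimensional coincidences). Near $w$, the fragment $F_{\sigma,p}$ meets both $\{\ell>0\}$, on the $C$ side, and $\{\ell<0\}$, on the exit side; this uses that near $w$ the points of $C$ are covered by the donor pieces. So $m_F<0<M_F$ and $p\in J_\sigma(x)$. But then $z\in H_{x\preceq p}$ together with $w\in\Pi_{x,p}$ and the convexity of the segment contradict $\ell_{x,p}<0$ just past $w$.

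Making this crossing argument rigorous in degenerate configurations, where $w$ lies on several bisectors or input facets, is the main obstacle. The remedy I would try first is to perturb $y_0$ and then $z$ within open sets, so that the segment meets only interiors of $(d-1)$-faces, and to conclude for the original $z$ by closedness.

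The facet statement follows next. Every non-input facet of $C$ lies on some $\Pi_{x,p}$ with $p\in J$, by the second description. Near a relative interior point of that facet, the points outside $C$ have $p$ as their nearest site, while the points inside satisfy $\ell_{x,p}=0$ on the facet. Hence the facet equals $\sigma\cap V_p(P)\cap\Pi_{x,p}=I_{\sigma,p}$, using the local description around a generic facet point together with convexity of both sets. Conversely, each $I_{\sigma,p}$ with $p\in J$ is $(d-1)$-dimensional, because $F$ straddles $\Pi_{x,p}$ with interior points on both sides. It is contained in $C$, since on it $\|y-x\|=\|y-p\|=d_P(y)$. It is therefore a facet of $C$.

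Irredundance is then immediate: dropping the inequality for some $p\in J$ would remove the facet $I_{\sigma,p}$. The vertex formula \eqref{eq:vertex-reuse} follows because a vertex of $C$ either lies on no non-input facet, in which case it is a vertex of $\sigma$, or lies in some $I_{\sigma,p}$. In the latter case it is extreme in $I_{\sigma,p}\subset C$ and hence a vertex of $I_{\sigma,p}$. Conversely, vertices of $I_{\sigma,p}$ are extreme points of the face $I_{\sigma,p}$ of $C$.

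Finally, if $F_{\sigma,p}$ is a donor, pick a point $y$ of $F_{\sigma,p}$ with $\ell_{x,p}(y)\ge0$. Then $y\in V_x(P^+)\cap\Omega\cap V_p(P)$, and \eqref{eq:locality} gives $\|p-x\|\le 2h(P)$.
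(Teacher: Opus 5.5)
Your proposal is correct, but its central step is organized differently from the paper's.

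\textbf{How the two routes differ.} The paper proves the facet statement first. A non-input facet of $C$ lies in some $\Pi_{x,p}$. Take a relative-interior point of it in $\operatorname{int}(\sigma)$ that avoids all other bisectors $\Pi_{x,q}$; these are distinct hyperplanes, since reflecting $x$ across $\Pi_{x,p}$ recovers $p$. At that point, $p$ is the unique old nearest site on a neighborhood straddling the facet, so $p\in J_\sigma(x)$. The second description in \eqref{eq:donor-reconstruct} then follows from the standard fact that a full-dimensional polytope is the intersection of its facet halfspaces, and the first follows by sandwiching.

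You reverse the order. You prove $C'\subseteq C$ by an exit-point argument along a segment, and only then read the facets off the H-description.

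\textbf{The shared local fact.} Both routes rest on the same local ingredient, and it is exactly what your ``general position'' has to mean. At the exit point $w$ one has $\|w-x\|=d_P(w)$; otherwise a neighborhood of $w$ in $\sigma$ lies in $C$. So every old nearest site $q$ at $w$ puts $w$ on $\Pi_{x,q}$. Choose the segment to avoid the finitely many codimension-two sets $\Pi_{x,q}\cap\Pi_{x,q'}$. This forces a unique nearest site $p$. A neighborhood of $w$ in $\sigma$ then lies in $F_{\sigma,p}$, and $\ell_{x,p}$ changes sign there, so $p\in J_\sigma(x)$.

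Such a segment exists. If $C'\setminus C$ is nonempty, it is relatively open in the full-dimensional convex set $C'$, so it contains a ball. A generic $z$ can therefore be chosen from the start, and no closedness step for a fixed $z$ is needed.

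\textbf{What each route buys.} The paper's route is shorter because it delegates the global step to the facet representation theorem. Yours is self-contained once this genericity is stated explicitly.

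\textbf{Three small corrections.}

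First, since $J_\sigma(x)\subseteq D_\sigma(x)$, the chain is
$C\subseteq\sigma\cap\bigcap_{p\in D_\sigma(x)}H_{x\preceq p}\subseteq\sigma\cap\bigcap_{p\in J_\sigma(x)}H_{x\preceq p}$.
Your final $\supset$ points the wrong way, and only the correct direction lets $C'\subseteq C$ close the loop.

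Second, identifying a facet with $I_{\sigma,p}$ needs no local or convexity argument. For every $p\in P$, the nearest-site inequalities give $C\cap\Pi_{x,p}=F_{\sigma,p}(P)\cap\Pi_{x,p}$ directly.

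Third, for $p\in J_\sigma(x)$ you should check that $I_{\sigma,p}$ is a \emph{non-input} facet. $\Pi_{x,p}$ cannot be a facet hyperplane of $\sigma$, because $F_{\sigma,p}\subseteq\sigma$ has points strictly on both sides of it. Also, a $(d-1)$-dimensional subset of $\Pi_{x,p}$ cannot lie in a different hyperplane.
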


\begin{proof}
The nearest-site inequalities and Theorem~\ref{thm:witness} give
\[
 F_{\sigma,p}(P^+)=F_{\sigma,p}(P)\cap H_{p\preceq x},\qquad
 C=\bigcup_{\dim F_{\sigma,p}=d}(F_{\sigma,p}\cap H_{x\preceq p}).
\]
Nonessential old fragments only shrink. The finite union has positive volume exactly when a donor exists, and \eqref{eq:locality} places the owner of every donor fragment in $B(x,2h(P))$.

Suppose $C$ is essential. A non-input facet is supported by some $\Pi_{x,p}$. Choose a relative-interior point in $\operatorname{int}(\sigma)$ avoiding the other bisectors with $x$. For fixed $x$, distinct sites give distinct bisector hyperplanes: reflection of $x$ across $\Pi_{x,p}$ uniquely determines $p$. Their intersections therefore have dimension at most $d-2$. At the chosen point,
\[
 \|y-p\|=\|y-x\|<\|y-q\| \qquad (q\ne p).
\]
Thus $p$ is the unique old nearest site on a neighborhood crossing the facet, so $p\in J_\sigma(x)$. Conversely, for a proper donor the bisector crosses the fragment interior, and the nearest-site inequalities give $I_{\sigma,p}=C\cap\Pi_{x,p}$, a non-input facet. Together with the input inequalities, these facet inequalities describe $C$, and each proper-donor inequality is necessary. Every other donor inequality is valid on $C$, so adding it leaves $C$ unchanged. This proves \eqref{eq:donor-reconstruct}. A bisector coinciding with an input facet requires no additional inequality, since that boundary is already imposed by $\sigma$.

A vertex of $C$ on a non-input facet is an interface vertex. At any other vertex, the active input normals span $\R^d$, so it is an input-cell vertex. Conversely, interface vertices and input-cell vertices retained in $C$ are extreme in $C$. This proves \eqref{eq:vertex-reuse}.
\end{proof}

Strict signs exclude cases such as $m_F=0<M_F$, where the retained part is lower-dimensional. If $C$ is essential but has no proper donors, then $C=\sigma$. The retained cuts already supply the interface vertices, whose coordinates are reused in \eqref{eq:vertex-reuse}. Each input cell is updated separately, so overlaps between cells require no special treatment.

\subsection{Maintaining the farthest point}\label{subsec:lefps-maxima}

For each essential fragment $F=F_{\sigma,p}$, store the squared distances $\|v-p\|^2$ at its vertices and the pair
\begin{equation}\label{eq:record}
 \mu_F^2=\max_{v\in\Vertx(F)}\|v-p\|^2,
 \qquad
 c_F=\lexmin\argmax_{v\in\Vertx(F)}\|v-p\|^2.
\end{equation}
We call $(\mu_F^2,c_F)$ its \emph{maximum record}. Maintain these records in decreasing order of $\mu_F^2$, breaking ties first by the lexicographic order of $c_F$, and then by fixed cell and owner identifiers. The identifiers distinguish multiple occurrences of the same geometric point; they never override the prescribed coordinate tie rule.

By Theorem~\ref{thm:witness}, the first record gives both the squared covering radius and the next point in \eqref{eq:canonical}. The following lemma allows stored distances and records to be reused.

\begin{lemma}[Reuse of vertex distances and maximum records]
\label{lem:persistence}
The first maximum record determines the farthest point selected by \eqref{eq:canonical}, independently of the polytopal representation of $\Omega$. After cutting an old fragment by $H_{p\preceq x}$, the maximum record of an essential retained fragment can be obtained from the surviving old vertices and the newly created cut vertices. Squared distances to the old owner do not change at surviving vertices, and an interface vertex has the same squared distance to the old and new owners. In particular, if the old maximizing vertex $c_F$ survives, the maximum record does not require a numerical or ordering update.
\end{lemma}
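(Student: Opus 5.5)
The lemma has four parts, and I will prove them in order: the first record and representation independence, the vertex set of a cut fragment, the distance identities, and the final reuse claim. Everything rests on Theorem~\ref{thm:witness} and Theorem~\ref{thm:reconstruct}.

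\textbf{The first record.} By Theorem~\ref{thm:witness}, $h(P)^2=\max_F\mu_F^2$, where $F$ ranges over essential fragments. So the first record in the decreasing order has $\mu_F^2=h(P)^2$. The same theorem shows that $\Far(P)$ is exactly the union of the maximizing vertices of essential fragments whose $\mu_F^2=h(P)^2$. Hence $\lexmin\Far(P)$ is the lexicographically smallest $c_F$ among records with maximal $\mu_F^2$. This is exactly the point the ordering selects, because ties are broken first by $c_F$; the cell and owner identifiers only separate records whose $c_F$ is the same geometric point. The set $\Far(P)$ and the lexicographic order are defined through $\Omega$ and $d_P$ alone. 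So the selected point, and $h(P)^2$, do not depend on the cover $\K$, even though individual records do.

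\textbf{Vertices of a cut fragment.} By Theorem~\ref{thm:reconstruct}, the updated fragment is $F'=F\cap H_{p\preceq x}$, the intersection of a polytope with a halfspace. Every vertex of $F'$ is of one of two kinds:
\begin{itemize}
\item a vertex of $F$ lying in $H_{p\preceq x}$, that is, a surviving old vertex;
\item a vertex of $F\cap\Pi_{x,p}$, that is, a newly created cut vertex.
\end{itemize}
To see this, take a vertex $v$ of $F'$ with $\ell_{x,p}(v)<0$. Near $v$ the sets $F'$ and $F$ coincide, so $v$ is extreme in $F$. If instead $\ell_{x,p}(v)=0$, then $v$ is extreme in $F\cap\Pi_{x,p}$. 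Therefore $\mu_{F'}^2$ and $c_{F'}$ are obtained by taking a maximum over these two finite lists.

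\textbf{Distance identities.} The owner of $F'$ is still $p$, so the stored value $\|v-p\|^2$ at a surviving vertex is unchanged. At an interface vertex we have $\ell_{x,p}(v)=0$, which means $\|v-p\|^2=\|v-x\|^2$. So the same value is valid both for $F'$ and for the new fragment owned by $x$, whose vertices come from \eqref{eq:vertex-reuse}.

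\textbf{Reuse when $c_F$ survives.} Suppose $c_F\in H_{p\preceq x}$. Since $F'\subset F$ and the owner is the same, we have
\[
\max_{\Vertx(F')}\|v-p\|^2\le\mu_F^2 .
\]
The surviving vertex $c_F$ attains this bound, so $\mu_{F'}^2=\mu_F^2$. It remains to check that $c_F$ is still the lexicographic minimizer. Any cut vertex $w$ attaining $\mu_F^2$ is a point of $F$ with $\|w-p\|^2=\max_F\|\cdot-p\|^2$. By strict convexity of the squared norm, every maximizer of a strictly convex function over a polytope is a vertex, so $w$ is already a vertex of $F$. Its comparison with $c_F$ was therefore made when $c_F$ was chosen, and the minimizer is unchanged. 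Hence the record, and its place in the order, need no update.

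I expect this last step to be the main obstacle. It relies on the observation that new cut vertices cannot tie at the maximum unless they were already vertices of $F$. That observation must also hold for degenerate cuts, where $\Pi_{x,p}$ passes through existing vertices.
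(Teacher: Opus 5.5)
Your proof is correct and follows the paper's argument. Theorem~\ref{thm:witness} identifies the first record with $\lexmin\Far(P)$ independently of the cover, and a halfspace cut keeps the surviving old vertices and adds only vertices lying on $\Pi_{x,p}$. When $c_F$ survives, the maximum is unchanged and every maximizer of the retained fragment was already a maximizer of $F$.

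Your closing worry about degenerate cuts is already settled by your own strict-convexity step. That step shows that any point of $F$ attaining $\mu_F^2$ is a vertex of $F$, however $\Pi_{x,p}$ meets the existing vertices.
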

\begin{proof}
Theorem~\ref{thm:witness} places every farthest point at a fragment vertex, so the first record selects the lexicographically smallest one, independently of the cover. A halfspace cut retains old vertices and creates only edge--hyperplane intersections. Surviving owner distances are unchanged, and bisector vertices are equidistant from the two owners. Restriction cannot increase the maximum. If $c_F$ survives, the old maximum is still attained, and every maximizing point of the retained fragment was already a maximizing point of $F$. Thus neither the value nor its lexicographically first maximizer changes.
\end{proof}

Keep the vertex distances ordered within each fragment, evaluate each new interface distance once, and change the global index only when a maximum record changes, is created, or is removed. For each reused vertex, update its incident faces and retain $d$ independent active constraint equations, selected by exact rank tests. These equations are used in the bit-complexity analysis.

\subsection{Algorithm and computational complexity}
\label{subsec:lefps-algorithm}

Maintain a grid of side $s$ with $s/2<h(P)\le s$ in a fixed enclosing box. Inspecting a bounded number of grid cubes finds all owners in $B(x,2h(P))$, possibly together with additional nearby sites. For certified $P$, Lemma~\ref{lem:compact} bounds the report size; singleton work depends only on the input.

Rebuild the grid only when $h(P)\le s/2$, choosing the new dyadic scale so that $s/2<h(P)\le s$ again and skipping unused intermediate scales. If $n_j,s_j$ are the populations and scales at rebuilds, then through $P_N$,
\begin{equation}\label{eq:grid-sum}
 \sum_j n_j=O(N),\qquad \sum_j s_j^{-d}=O(N).
\end{equation}
Indeed, certification gives $\delta(P)>s_j/2$ at each rebuild, so packing yields $n_j\le Cs_j^{-d}$. The scales decrease geometrically, and the covering lower bound in \eqref{eq:volume-scales} gives $s_j^{-d}=O(N)$ at the final scale. Initializing grids and reassigning sites therefore cost $O(N)$ altogether. Squared dyadic thresholds avoid square roots. Algorithm~\ref{alg:lfpm} combines this grid with local reconstruction and the maximum index.

\begin{algorithm}[htbp]
\caption{Local Exact Farthest-Point Sampling (LEFPS)}\label{alg:lfpm}
\begin{algorithmic}[1]
\Require Rational polytopal cover $\K$ and initial point
         $x_1\in\Omega\cap\Q^d$.
\State Set $P\gets\{x_1\}$; initialize the essential fragments,
       vertex distances, maximum records, and spatial index.
\State Output $x_1$ in reduced rational coordinates.
\While{another point is requested}
 \State Read $x$ and $H^2=h(P)^2$ from the first maximum record.
 \State Report nearby owners, including all sites in $B(x,2H)$.
 \State Classify their fragments using \eqref{eq:donor-signs}.
 \State Compute the retained cuts and proper-donor interfaces;
        remove nonessential retained fragments.
 \State Construct the new essential fragments using
        \eqref{eq:donor-reconstruct}--\eqref{eq:vertex-reuse}.
 \State Update supporting equations and incidences; reuse existing
        distances and update changed maximum records.
 \State Set $P\gets P\cup\{x\}$ and update the spatial index and scale.
 \State Output $x$ in reduced rational coordinates.
\EndWhile
\end{algorithmic}
\end{algorithm}
\FloatBarrier

\begin{theorem}[Correctness and operation count of LEFPS]\label{thm:lfpm}
For fixed dimension, rational polytopal representation, and rational initial point, Algorithm~\ref{alg:lfpm} generates exactly the point sequence \eqref{eq:canonical}. The sequence is rational and independent of the chosen cover of $\Omega$, and satisfies
\[
 \rho(P_n)\le2\qquad(n\ge2).
\]
The algorithm is autonomous. It also handles vertices at which more than $d$ constraint hyperplanes meet and Voronoi boundaries that coincide with input-cell facets. Generating the first $N$ points uses $O(N)$ local geometric computations, each involving a bounded number of rational operations. With a balanced comparison-based index for the maximum records, the total cost is $O(N\log(N+2))$ operations in the unit-cost model, and the storage is $O(N)$ scalar records.
\end{theorem}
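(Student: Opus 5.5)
The proof will be an induction on the number of inserted points. The invariant is that, after each iteration, the stored essential fragments are exactly the essential fragments $F_{\sigma,p}(P)$. Their vertex lists, squared owner--vertex distances, and maximum records are correct, and the index orders the records by decreasing $\mu_F^2$ with lexicographic ties on $c_F$.

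\textbf{Correctness.} For the base case $P=\{x_1\}$, each essential fragment is $\sigma$ itself, and initialization computes its records exactly.

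For the inductive step, Theorem~\ref{thm:witness} and Lemma~\ref{lem:persistence} show that the first record gives $h(P)^2$ and $x_{n+1}=\lexmin\Far(P)$, independently of the cover. The reported set contains $P\cap B(x,2H)$. By Theorem~\ref{thm:reconstruct} and \eqref{eq:locality}, this set contains every donor owner, so the classification \eqref{eq:donor-signs} finds all of $D_\sigma(x)$ and $J_\sigma(x)$. Every other fragment is unchanged. The retained cuts are $F\cap H_{p\preceq x}$, and \eqref{eq:donor-reconstruct}--\eqref{eq:vertex-reuse} give the new fragments together with their vertices. Lemma~\ref{lem:persistence} then justifies the record updates, which restores the invariant.

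Rationality follows because every vertex solves a rational linear system. Independence from the cover is already part of Lemma~\ref{lem:persistence}. The bound $\rho\le2$ follows from \eqref{eq:greedy-separation}.

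\textbf{Degeneracies and autonomy.} Theorem~\ref{thm:witness} and Theorem~\ref{thm:reconstruct} assume no general position. The local polyhedral computation works with independent sets of active equations, so vertices with more than $d$ active hyperplanes cause no difficulty. Bisectors that coincide with input facets produce the case $m_F\ge0$ or a redundant inequality, and both are handled exactly. The algorithm is autonomous because each step uses only the current state and outputs its point immediately.

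\textbf{Operation count.} Since $P_n$ is certified for $n\ge2$, Lemma~\ref{lem:compact} applies with $s$ the grid scale and $c=1/2$. The radius of each insertion is $h(P)\ge s/2$, which gives the following bounds for each insertion:
\begin{itemize}
\item a bounded number of grid cubes are inspected and a bounded number of sites are reported;
\item a bounded number of fragments are classified and cut;
\item a bounded number of records are replaced, each involving a bounded number of rational operations.
\end{itemize}
The single singleton step is $O(1)$ for fixed input. This gives $O(N)$ local computations in total.

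Each record change costs $O(\log(N+2))$ in a balanced search tree. The index holds $O(N)$ records, because Lemma~\ref{lem:compact} gives at most $|\K|N$ fragments, each of bounded size. Grid maintenance costs $O(N)$ in total by \eqref{eq:grid-sum}. Together these give $O(N\log(N+2))$ operations and $O(N)$ storage.

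\textbf{Main obstacle.} The delicate point is the scale of the grid query. I must show that inspecting a bounded number of cubes of side $s$ covers $B(x,2H)$ with $H\le s$. I must also show that the packing bound uses the pre-update separation $\delta(P)\ge h(P)\ge s/2$. Finally, the post-update bounds must remain valid: the new separation satisfies $\min\{\delta,r\}\ge s/2$, and Lemma~\ref{lem:compact} states that its bounds hold both before and after the update.
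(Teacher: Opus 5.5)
Your proposal is correct and follows essentially the same route as the paper: an induction using Lemma~\ref{lem:persistence} for the selection, the locality bound \eqref{eq:locality} with Theorem~\ref{thm:reconstruct} for the exact update, and \eqref{eq:greedy-separation} for the mesh ratio, followed by Lemma~\ref{lem:compact} with $c=1/2$, \eqref{eq:grid-sum}, and a balanced index for the $O(N\log(N+2))$ count and $O(N)$ storage. You leave implicit a few points that the paper states: $h(P)>0$ (because $\Omega$ has positive volume) guarantees that the selected point is not already in $P$, as Theorem~\ref{thm:reconstruct} requires; and removing redundant inequalities, writing the points, and the terminal state export add only bounded per-insertion or $O(N)$ total work.
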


\begin{proof}
Initialize the fragments and records from the fixed rational input. Lemma~\ref{lem:persistence} selects \eqref{eq:canonical}, and $h(P)>0$ ensures that the selected point is new. The spatial report contains the owner of every donor fragment, so Theorem~\ref{thm:reconstruct} restores the exact diagram. Induction gives rationality, cover independence, and autonomy; the mesh-ratio bound follows from \eqref{eq:greedy-separation}.

At grid scale $s$, both the old minimum pairwise distance and the insertion radius exceed $s/2$. Lemma~\ref{lem:compact} bounds each local update and the number of changed maximum records. Each update starts from a bounded-size constraint system and introduces only boundedly many additional inequalities, so removing redundant inequalities also requires bounded unit-cost work. Balanced-index operations cost $O(\log(N+2))$ per insertion; grid rebuilds cost $O(N)$ in total by \eqref{eq:grid-sum}. At most $N|\K|$ bounded-size fragments are live, giving $O(N)$ storage. Point output and one terminal state export add $O(N)$ unit-cost work.
\end{proof}

Reconstructing the entire diagram at every insertion creates at least $n$ fragments at $P_n$, so its total fragment-record work through $P_N$ is at least a constant multiple of $N^2$. This lower bound concerns repeated global reconstruction, not farthest-point sampling itself. Figure~\ref{fig:local-update} contrasts the two updates.

\begin{figure}[!htbp]
\centering
\includegraphics[width=\textwidth]{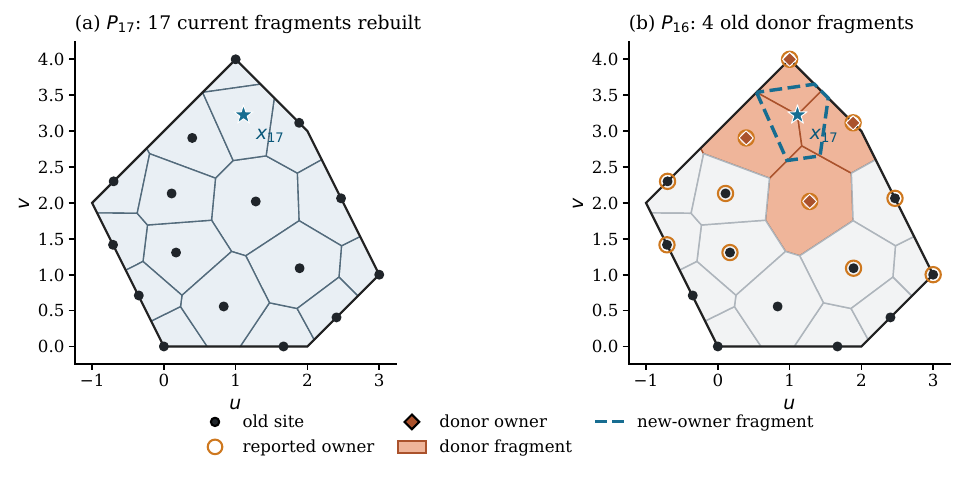}
\caption{A farthest-point insertion on the convex hexagon (H2).
Left: global reconstruction builds all 17 fragments of $P_{17}$.
Right: the local update identifies four donor fragments in $P_{16}$
and constructs the fragment owned by $x_{17}$. The spatial report
contains the owner of every donor fragment and may also contain other nearby owners.}
\label{fig:local-update}
\end{figure}

\section{Arithmetic-Aware Admissible Sampling}\label{sec:admissible}

The second algorithm, AAS, chooses short rational coordinates while preserving the mesh-ratio bound. We first characterize the allowed insertions and give an exact test using the current diagram. We then show that minimizing coordinate length over the entire candidate set requires testing only a bounded number of nearby candidates. Once we accept a point, we update the diagram as in Section~\ref{sec:geometry}.

\subsection{Insertions that preserve the mesh-ratio bound}
\label{subsec:admissible-condition}

Let $P$ be a singleton or a certified design. Recall that we call $x\in\Omega\setminus P$ \emph{admissible} if $\rho(P\cup\{x\})\le2$. The following theorem gives an equivalent condition in terms of the insertion radius and the new covering radius.

\begin{theorem}[Admissibility and insertion radii]\label{thm:admissible}
Suppose that $P$ is a singleton or $h(P)\le2q(P)$. For
$x\in\Omega\setminus P$, write
$r=d_P(x)$ and $H^+=h(P\cup\{x\})$. Then
\begin{equation}\label{eq:admissible}
 \rho(P\cup\{x\})\le2
 \quad\Longleftrightarrow\quad H^+\le r.
\end{equation}
Every sequence obtained from a singleton by admissible insertions
satisfies
\begin{equation}\label{eq:interlacing}
 2q(P_n) = \delta(P_n) = r_n \ge h_n \ge r_{n+1} \ge \frac{h_n}{2}\qquad(n\ge2).
\end{equation}
In addition, $r_2\ge h_1/2$ and
$h_n\asymp r_n\asymp n^{-1/d}$.
\end{theorem}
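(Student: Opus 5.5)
The argument rests on two elementary identities for $P^+=P\cup\{x\}$: $\delta(P^+)=\min\{\delta(P),r\}$ and $r\le h(P)$. The second holds because $x\in\Omega$, so $r=d_P(x)\le h(P)$. The equivalence \eqref{eq:admissible} will come from checking $H^+\le\delta(P^+)$ directly, and the interlacing chain \eqref{eq:interlacing} will follow by induction along the sequence.

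\emph{The equivalence.} The condition $\rho(P^+)\le2$ means $H^+\le\delta(P^+)=\min\{\delta(P),r\}$. For a singleton $P$, $\delta(P)=+\infty$, so this is exactly $H^+\le r$. If $P$ is certified, then $H^+\le h(P)\le\delta(P)$ holds automatically, because adding a point cannot increase the covering radius. Hence $H^+\le\min\{\delta(P),r\}$ again reduces to $H^+\le r$. The converse is the same computation read backwards.

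\emph{The chain \eqref{eq:interlacing}.} I would prove by induction on $n\ge2$ that $\delta(P_n)=r_n$ and $r_n\ge h_n$.
\begin{itemize}
\item Base case $n=2$: $\delta(P_2)=\|x_2-x_1\|=r_2$, and admissibility gives $h_2\le r_2$.
\item Inductive step: $P_n$ is certified, so the insertion of $x_{n+1}$ is admissible, which gives $h_{n+1}\le r_{n+1}$. Moreover $r_{n+1}\le h_n\le r_n$, so $\delta(P_{n+1})=\min\{r_n,r_{n+1}\}=r_{n+1}$.
\end{itemize}
This yields $r_n\ge h_n\ge r_{n+1}$.

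\emph{The lower bound $r_{n+1}\ge h_n/2$.} This is the one step needing geometry. Take $y$ with $d_{P_n}(y)=h_n$. Then $h_{n+1}\ge d_{P_{n+1}}(y)=\min\{h_n,\|y-x_{n+1}\|\}$. If the minimum is $h_n$, then $r_{n+1}\ge h_{n+1}\ge h_n$. Otherwise $\|y-x_{n+1}\|\le h_{n+1}\le r_{n+1}$. The triangle inequality against the nearest site $p\in P_n$ to $x_{n+1}$ then gives
\[
h_n\le\|y-p\|\le\|y-x_{n+1}\|+r_{n+1}\le2r_{n+1}.
\]
The same argument applied at $n=1$ gives $r_2\ge h_1/2$.

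\emph{Asymptotics.} The rates $h_n\asymp r_n\asymp n^{-1/d}$ follow from \eqref{eq:volume-scales}. Indeed, $r_n=\delta(P_n)\le C_\Omega n^{-1/d}$, $h_n\ge c\,n^{-1/d}$, and $h_n\le r_n\le h_{n-1}$. For the lower bound on $r_n$, use $r_n\ge h_{n-1}/2\ge c'(n-1)^{-1/d}$. For the upper bound on $h_n$, use $h_n\le r_n$.

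The step I would be most careful with is the lower bound $r_{n+1}\ge h_n/2$. It is the only place where geometry beyond bookkeeping enters, and the case split on whether the old farthest point $y$ is served by the new site must be handled cleanly.
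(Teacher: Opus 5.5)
Your proof is correct and follows essentially the paper's route. The equivalence comes from $\delta(P\cup\{x\})=\min\{\delta(P),r\}$: you observe that $H^+\le\delta(P)$ holds automatically, while the paper notes directly that $\delta(P\cup\{x\})=r$. The chain follows by induction, and the bound $r_{n+1}\ge h_n/2$ uses the same triangle-inequality argument through an old farthest point and the site nearest to $x_{n+1}$.
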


\begin{proof}
Since $r\le h(P)\le\delta(P)$, insertion makes the minimum pairwise distance equal to $r$, also for a singleton. Hence $\rho(P\cup\{x\})=2H^+/r$, proving \eqref{eq:admissible} and, by induction, $\delta(P_n)=r_n\ge h_n\ge r_{n+1}$.

Put $H=h(P)$. If $r<H$, an old farthest point $f$ must satisfy $\|f-x\|\le r$, since $\min\{H,\|f-x\|\}\le H^+\le r$. For a nearest site $p$ to $x$,
\[
 H=d_P(f)\le\|f-p\|\le\|f-x\|+\|x-p\|\le2r.
\]
The case $r=H$ is immediate. This also proves the bound for the first insertion. The asymptotic scales follow from \eqref{eq:volume-scales} and $h_n\le r_n=\delta(P_n)$.
\end{proof}

For an arbitrary finite starting design that is not assumed to be certified, the condition is instead $H^+\le\min\{2q(P),r\}$. Under the hypothesis of Theorem~\ref{thm:admissible}, every farthest point is admissible, because $H^+\le h(P)=r$. Other points may also be admissible; an example is given later in Example~\ref{ex:aai-choice}.

The equivalent condition in Theorem~\ref{thm:admissible} has a simple geometric interpretation: $x$ is admissible if and only if the closed balls of radius $r=d_P(x)$ centered at the points of $P\cup\{x\}$ cover $\Omega$:
\[
 \Omega\subseteq
 \left(\bigcup_{p\in P}B(p,r)\right)\cup B(x,r).
\]
Since the portion not covered by the balls centered at $P$ is
\[
 \Omega\setminus\bigcup_{p\in P}B(p,r)
 =\{y\in\Omega:d_P(y)>r\},
\]
admissibility is equivalent to 
\begin{equation}\label{eq:hole-set}
 \{y\in\Omega:d_P(y)>r\}\subseteq B(x,r).
\end{equation}
Thus, the ball centered at the new point must cover the portion the existing balls leave uncovered.

The lower bound $r_{n+1}\ge h_n/2$ also relates admissible insertion to relaxed greedy packing. The latter requires $r_{n+1}\ge\alpha_n h_n$ and guarantees $\rho\le2/a$ when $\alpha_n\ge a>0$; see \cite[Algorithm~2 and Theorem~3.6]{PZ}. Using only $a=1/2$ would give $\rho\le4$. The sharper bound $\rho\le2$ here follows from the additional post-insertion condition $H^+\le r$, not from the lower bound on the insertion radius alone.

\subsection{An exact covering test using vertices and edges}
\label{subsec:admissible-test}

We can test a candidate without constructing the diagram that would result from its insertion. On an old fragment $F=F_{\sigma,p}$, the new squared distance is $\min\{\|y-p\|^2,\|y-x\|^2\}$. Its maximum is attained at an old vertex or where the bisector cuts an old edge. We test these points against a squared threshold $t\ge0$; for admissibility, take $t=d_P(x)^2$.

For an edge $[u,v]$ of $F$, put
\[
 e=v-u,\qquad
 a=\|u-p\|^2-\|u-x\|^2,\qquad
 b=2e\cdot(x-p),
\]
and define
\begin{equation}\label{eq:quartic}
 Q_{F,e}(x,t)
  =(\|u-p\|^2-t)b^2-2(u-p)\cdot e\,ab+\|e\|^2a^2.
\end{equation}
Along the edge, $\ell_{x,p}(u+\theta e)=a+b\theta$. When the bisector crosses the edge interior, the sign of $Q_{F,e}$ determines whether the squared distance there exceeds $t$, without computing the intersection coordinates.

\begin{theorem}[Vertex--edge test for the new covering radius]
\label{thm:edge}
The inequality $h(P\cup\{x\})^2\le t$ holds if and only if every old essential fragment $F=F_{\sigma,p}(P)$ satisfies
\begin{align}
 &\|v-p\|^2\le t\quad\text{or}\quad\|v-x\|^2\le t
       &&\text{at every vertex }v,\label{eq:vertex}\\
 &a(a+b)\ge0\quad\text{or}\quad Q_{F,e}(x,t)\le0
       &&\text{at every edge }[u,v].\label{eq:edge}
\end{align}
For rational $x$ and $t$, these are rational sign tests and do not require the trial intersection coordinates. A fragment with $\mu_F^2\le t$ satisfies the conditions without further testing.

In particular, let $P$ be a singleton or certified design and let $t=r^2=d_P(x)^2<h(P)^2$. If $x$ is admissible, every fragment with $\mu_F^2>r^2$ has its owner in $B(x,h(P)+r)\subseteq B(x,2h(P))$.
\end{theorem}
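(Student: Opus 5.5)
The plan is to reduce the global inequality $h(P\cup\{x\})^2\le t$ to a fragment-by-fragment statement, and then to reduce each fragment to its vertices and edges. By Theorem~\ref{thm:witness}, every point of $\Omega$ lies in some essential old fragment $F=F_{\sigma,p}(P)$. On $F$ we have $d_{P\cup\{x\}}(y)^2=g(y):=\min\{\|y-p\|^2,\|y-x\|^2\}$. Hence the inequality holds if and only if $\max_F g\le t$ for every essential $F$. The function $g$ equals $\|y-p\|^2$ on $F^-=F\cap\{\ell_{x,p}\le0\}$ and $\|y-x\|^2$ on $F^+=F\cap\{\ell_{x,p}\ge0\}$. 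Both pieces are convex polytopes, and each squared norm is convex on its piece. So $\max_F g$ is attained at a vertex of $F^-$ or of $F^+$. A halfspace cut of a polytope has as vertices the old vertices on the correct side, together with the intersection points of old edges with the hyperplane $\Pi_{x,p}$. This is the same fact used in Lemma~\ref{lem:persistence}. Therefore $\max_F g\le t$ if and only if (a) $g(v)\le t$ at every vertex $v$ of $F$, and (b) $g\le t$ at every point where $\Pi_{x,p}$ meets an edge. Condition (a) is exactly \eqref{eq:vertex}. Endpoint intersections are already covered by (a), so only crossings strictly inside an edge remain. At such a crossing both distances coincide, so $g=\|y-p\|^2$ there.

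The key algebraic step is the edge case. Parametrize the edge as $y=u+\theta e$ with $\theta\in[0,1]$, so that $\ell_{x,p}(y)=a+b\theta$. The bisector crosses the open edge exactly when $a$ and $a+b$ have strictly opposite signs, that is, when $a(a+b)<0$; this forces $b\ne0$ and $\theta^*=-a/b$. Then
\[
\|u+\theta^*e-p\|^2-t=(\|u-p\|^2-t)+2\theta^*(u-p)\cdot e+\theta^{*2}\|e\|^2 .
\]
Multiplying by $b^2>0$ gives exactly $Q_{F,e}(x,t)$ in \eqref{eq:quartic}. So whenever the disjunction's first alternative fails, the sign of $Q_{F,e}$ is the sign of $g-t$ at the crossing, which yields \eqref{eq:edge}. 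I would check the sign conventions here carefully, because this identity is the only computation in the proof. All quantities involved are polynomial in rational data, so the tests are rational sign tests. If $\mu_F^2\le t$, then $\|v-p\|^2\le t$ at every vertex, and every edge point satisfies $\|y-p\|^2\le\mu_F^2\le t$ by convexity. Both conditions then hold automatically.

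For the final claim, take $t=r^2<h(P)^2$ and assume $x$ is admissible, so $H^+\le r$ by Theorem~\ref{thm:admissible}. Suppose $\mu_F^2>r^2$, and let $c=c_F$ be a vertex with $\|c-p\|>r$. Since $H^+\le r$ and $c\in\Omega$, some site of $P\cup\{x\}$ lies within distance $r$ of $c$. No old site can do this: $p$ is nearest to $c$ among $P$, and $\|c-p\|>r$. Hence $\|c-x\|\le r$. Also $\|c-p\|\le h(P)$, because $c\in F$. The triangle inequality then gives $\|p-x\|\le h(P)+r$, and $r<h(P)$ gives the inclusion in $B(x,2h(P))$. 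I expect no real obstacle here. The main care point is the claim that the vertices of $F^\pm$ are only old vertices and edge--bisector crossings, including in degenerate cases where the bisector contains a face. In that case every crossing lies at an old vertex or along an edge lying in $\Pi_{x,p}$. Such an edge's points are convex combinations of its endpoints with $g=\|y-p\|^2$, so they are controlled by (a) and convexity.
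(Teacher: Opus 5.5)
Your proposal is correct and takes essentially the same route as the paper. You cut each essential fragment by $\Pi_{x,p}$ and use convexity on the two pieces to reduce to old vertices and interior edge crossings. The crossing test comes from $b^2(\|u-(a/b)e-p\|^2-t)=Q_{F,e}(x,t)$, and the locality claim from $\|c_F-x\|\le r$ and the triangle inequality. The only difference is that you cite the halfspace-cut vertex description (as in Lemma~\ref{lem:persistence}) instead of re-deriving it from active constraints, which is harmless.
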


\begin{proof}
Cut $F$ by $\Pi_{x,p}$. On each part, the new squared distance is a convex quadratic, so it suffices to test the vertices of that part, even when the part is lower-dimensional. A vertex created by the cut has $d-1$ independent old active constraints together with the cutting equation. Since it was not already a vertex of $F$, it lies in the interior of an old edge. This argument also applies when more than $d$ constraints are active at a vertex. Old vertices give \eqref{eq:vertex}.

An edge has a strict interior crossing exactly when $a(a+b)<0$. Then $b\ne0$, the crossing parameter is $-a/b$, and
\[
 b^2\bigl(\|u-(a/b)e-p\|^2-t\bigr)=Q_{F,e}(x,t).
\]
Its sign gives the required distance test. Endpoint crossings are already tested, and an edge in the bisector needs only its endpoints by convexity. The fragments cover $\Omega$, proving necessity and sufficiency.

If $\mu_F^2\le t$, the old owner already covers $F$ to the threshold. Otherwise, for an admissible $x$ and a maximizing vertex $v$ of a fragment with $\mu_F^2>r^2$, \eqref{eq:hole-set} gives $\|v-x\|\le r$. Thus $\|p-x\|\le\|p-v\|+\|v-x\|\le h(P)+r$.
\end{proof}

Call $F$ \emph{dangerous for $t$} if $\mu_F^2>t$: the old owner alone does not cover it to the required threshold. Only these fragments need testing. Old vertices alone do not suffice: a new maximum may occur at an edge--bisector intersection. For fixed geometry, these tests and the additional conditions $x\in\Omega$, $t>0$, and $\|x-p\|^2\ge t$ for all $p\in P$ can be expressed using finitely many polynomial inequalities of degree at most four in $(x,t)$, combined by ``and'' and ``or''.

\subsection{The candidate set and localization of the selection rule}
\label{subsec:aas-candidates}

Let $\mathcal V(P)$ and $\mathcal E(P)$ be the sets of geometric vertices and edges of all current essential fragments. We take the fragment vertices and edge midpoints as candidates, excluding points already in $P$:
\begin{equation}\label{aai:eq:pool}
 \mathcal C_{\AAS}(P)
   =\left(\mathcal V(P)
      \cup\{(u+v)/2:[u,v]\in\mathcal E(P)\}\right)\setminus P.
\end{equation}
Points with the same coordinates represent a single candidate, even if they arise from different fragments. Among these candidates, the admissible ones form the set
\[
 \mathcal A(P)
 =\{x\in\mathcal C_{\AAS}(P):
       h(P\cup\{x\})\le d_P(x)\}.
\]

Arithmetic-Aware Admissible Sampling (AAS) selects an admissible
candidate with the shortest coordinate representation, breaking
ties lexicographically:
\begin{equation}\label{aai:eq:full}
 x_{n+1}
 =\lexmin\operatorname*{arg\,min}_{x\in\mathcal A(P_n)}
       \lambda(x).
\end{equation}
The minimum is taken over all admissible candidates. This set is nonempty: every farthest point is a fragment vertex by Theorem~\ref{thm:witness}, and every such point is admissible. Unlike LEFPS, the AAS sequence may depend on the chosen polytopal cover, because the cover determines the fragment vertices and edges.

For the implementation, we order candidates by the pair $\kappa(x)=(\lambda(x),x)$: first by representation length, then lexicographically by coordinates. Both comparisons use the original input coordinates. We write $\min_\kappa S$ for the first point of a finite nonempty set $S$ in this order.

As shown below, the global rule \eqref{aai:eq:full} can be implemented locally because every admissible nonfarthest point must lie near any chosen farthest point.

\begin{lemma}[Localization around a farthest point]\label{lem:focal}
Let $P$ be a singleton or a certified design, let $H=h(P)$, and choose $f\in\Far(P)$. Every admissible point $x$ with $d_P(x)<H$ satisfies
\begin{equation}\label{eq:focal-locality}
 \|x-f\|\le d_P(x)<H.
\end{equation}
Only $O_{\K,d}(1)$ vertex and edge-midpoint candidate occurrences lie in $B(f,H)$, and their generating owners all lie in $B(f,2H)$.
\end{lemma}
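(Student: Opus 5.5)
The plan has two parts: first the inequality \eqref{eq:focal-locality}, then the counting claim. For the inequality I would use the hole-set form of admissibility, \eqref{eq:hole-set}. Let $x$ be admissible and put $r=d_P(x)$. Since $f\in\Far(P)$, we have $d_P(f)=H>r$, so $f$ lies in $\{y\in\Omega:d_P(y)>r\}$. By \eqref{eq:hole-set} this set is contained in $B(x,r)$, which gives $\|x-f\|\le r<H$. Equivalently, Theorem~\ref{thm:admissible} gives $h(P\cup\{x\})\le r$. Hence $\min\{d_P(f),\|f-x\|\}\le r$, and $d_P(f)=H>r$ forces $\|f-x\|\le r$. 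This step is essentially the argument already used in the proof of Theorem~\ref{thm:admissible}.

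For the counting claim I would split into two cases. If $P$ is a singleton, the fragments are just the input cells, so the candidates are their vertices and edge midpoints. Their number depends only on $\K$.

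If $P$ is certified, set $s=H$. Then $h(P)\le s$ and $\delta(P)\ge h(P)=s$, so Lemma~\ref{lem:compact} applies with $c=1$.

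\begin{itemize}
\item A candidate occurrence in $B(f,H)$ is either a fragment vertex in that ball or the midpoint of an edge $[u,v]$ of some essential fragment $F_{\sigma,p}$ lying in that ball.
\item In the edge case the fragment meets $B(f,H)$, since it contains the midpoint. Lemma~\ref{lem:compact} with $A=1$ then bounds the number of owners whose fragments meet $B(f,H)$. It also bounds the edges of each such fragment, and each owner has at most $|\K|$ fragments. So there are $O_{\K,d}(1)$ midpoint occurrences.
\item Vertex occurrences in the ball are bounded directly by Lemma~\ref{lem:compact}.
\end{itemize}

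For the owner localization, note that a candidate $y$ generated by $F_{\sigma,p}$ lies in $F_{\sigma,p}$, because fragments are convex and contain their edge midpoints. Hence $\|y-p\|=d_P(y)\le H$. If $y\in B(f,H)$, the triangle inequality gives $\|p-f\|\le 2H$, so the generating owner lies in $B(f,2H)$.

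The step needing the most care is the bookkeeping about \emph{occurrences}. A geometric vertex may be stored by several fragments, and an edge midpoint near $f$ may come from an edge whose endpoints lie outside $B(f,H)$. I would therefore count through fragments meeting the ball, rather than through vertices inside it. Lemma~\ref{lem:compact} handles exactly that, with constants depending only on $\K$ and $d$, independent of $P$ and $H$.
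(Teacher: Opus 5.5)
Your proposal is correct and follows essentially the same route as the paper. Admissibility evaluated at $f$ gives $\min\{H,\|f-x\|\}\le d_P(x)<H$; the owner bound comes from $\|z-p\|=d_P(z)\le H$ and the triangle inequality; and the counting uses Lemma~\ref{lem:compact} for certified designs and the fixed input fragments for a singleton. Your explicit choice $s=H$, $c=1$, $A=1$, and your counting of midpoints through fragments meeting the ball, simply spell out what the paper's one-line appeal to Lemma~\ref{lem:compact} leaves implicit.
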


\begin{proof}
At $f$, admissibility gives
$\min\{H,\|f-x\|\}\le d_P(x)<H$, proving \eqref{eq:focal-locality}.
A candidate occurrence $z$ belongs to its owner's fragment, so
$\|z-p\|=d_P(z)\le H$. Thus $z\in B(f,H)$ implies $p\in B(f,2H)$.
Lemma~\ref{lem:compact} bounds these owners and their vertices and edges
when $P$ is certified. A singleton has only the fixed input fragments.
\end{proof}

Use the farthest point with the smallest key as a reference point: 
\begin{equation}\label{aai:eq:focal}
 f(P)=\min_\kappa\Far(P).
\end{equation}
To maintain this reference point, choose each fragment's maximizing vertex by $\kappa$ and use $\kappa$ to break ties between equal fragment maxima. The first maximum record then gives $H^2$ and $f(P)$.

Since $f(P)$ is admissible, only candidates preceding it can be selected instead. Such candidates are nonfarthest, and any that is admissible lies in $B(f(P),H)$ by Lemma~\ref{lem:focal}. Retrieve owners in $B(f(P),2H)$ and generate their fragment vertices and edge midpoints. Retain those in $B(f(P),H)$ that precede $f(P)$, remove duplicates and current sites, and test the resulting bounded list in $\kappa$-order. Select the first admissible candidate, or $f(P)$ if none passes. This realizes \eqref{aai:eq:full}.

For a candidate $z$ generated by owner $p$,
$r_z^2=d_P(z)^2=\|z-p\|^2$. Thus its insertion radius is local, and
$z\in P$ exactly when $r_z^2=0$.

\subsection{Algorithm and computational complexity}
\label{subsec:aas-algorithm}

To test $z$, use the dyadic scale $U/2<H\le U$ and report owners including all of $B(z,2U)$. Let $\mathcal L$ contain their fragments. The report may include additional nearby owners but remains bounded. Reject $z$ if any fragment outside $\mathcal L$ has $\mu_F^2>r_z^2$: Theorem~\ref{thm:edge} would place every such owner within $2H$ for an admissible nonfarthest candidate. Otherwise apply the vertex--edge test to the dangerous fragments in $\mathcal L$; all other fragments already satisfy the threshold.

To check fragments outside $\mathcal L$, temporarily remove the records of $\mathcal L$ from the maximum index, compare the largest remaining value with $r_z^2$, and restore the removed records. An empty index returns no violation. A balanced index costs $O(\log(|P|+2))$ unit-cost operations; local sign tests cost bounded work. Lemma~\ref{lem:priority} gives the bit implementation. Candidate tests leave the diagram unchanged; reconstruction is needed only for the accepted point.

\begin{algorithm}[htbp]
\caption{Arithmetic-Aware Admissible Sampling (AAS)}\label{alg:aai}
\begin{algorithmic}[1]
\Require Rational polytopal cover $\K$ and initial point
         $x_1\in\Omega\cap\Q^d$.
\State Set $P\gets\{x_1\}$; initialize the essential fragments,
       $\kappa$-tied maximum records, and spatial index.
\State Output $x_1$ in reduced rational coordinates.
\While{another point is requested}
 \State Read $f=\min_\kappa\Far(P)$ and $H^2=h(P)^2$ from the
        first maximum record.
 \State Generate $\mathcal C_{\AAS}(P)\cap B(f,H)$ from nearby fragments.
 \State Remove duplicate coordinates and sort the candidates preceding
        $f$ by $\kappa$; set $x\gets f$.
 \For{each candidate $z$ in that list}
  \State Compute $r_z^2$ from a generating owner and collect nearby
         fragments in $\mathcal L$.
  \If{no fragment outside $\mathcal L$ has $\mu_F^2>r_z^2$}
   \If{all dangerous fragments in $\mathcal L$ pass the vertex--edge test}
    \State Set $x\gets z$ and leave the candidate loop.
   \EndIf
  \EndIf
 \EndFor
 \State Update the fragments for insertion of $x$ as in
        Section~\ref{sec:geometry}; restore current supports and
        $\kappa$-tied maximum records.
 \State Set $P\gets P\cup\{x\}$ and update the spatial index and scale.
 \State Output $x$ in reduced rational coordinates.
\EndWhile
\end{algorithmic}
\end{algorithm}
\FloatBarrier

\begin{theorem}[Correctness and operation count of AAS]\label{thm:aai}
For fixed dimension, rational polytopal representation, and rational initial point, Algorithm~\ref{alg:aai} generates exactly the sequence specified by \eqref{aai:eq:full}. It is deterministic and autonomous, produces rational points, and terminates at every insertion with $\rho(P_n)\le2$ for $n\ge2$. In particular,
\begin{equation}\label{aai:eq:short}
 \lambda(x_{n+1})
   =\min_{x\in\mathcal A(P_n)}\lambda(x)
   \le\min_{f\in\Far(P_n)}\lambda(f).
\end{equation}
The number of candidate tests per insertion is bounded independently of $n$. Through the first $N$ points, the algorithm uses $O(N)$ candidate tests and bounded-size local geometric computations. With a balanced comparison-based index, its total cost is $O(N\log(N+2))$ operations in the unit-cost model, and its storage is $O(N)$ scalar records.
\end{theorem}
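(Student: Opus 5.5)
The proof goes by induction on $n$, with the invariant that $P_n$ is a singleton or certified, that the stored essential fragments, vertex distances and $\kappa$-tied maximum records are exactly those of $P_n$, and that the spatial index is valid at the current dyadic scale. The base case is the fixed input. For the inductive step, the first record gives $H^2=h(P_n)^2$ and $f=f(P_n)=\min_\kappa\Far(P_n)$. This follows from Theorem~\ref{thm:witness} together with the $\kappa$-version of Lemma~\ref{lem:persistence}, whose proof carries over verbatim with $\kappa$ in place of lexicographic order. The point $f$ is admissible because $H^+\le H=d_{P_n}(f)$, so $\mathcal A(P_n)\neq\emptyset$ and the right-hand side of \eqref{aai:eq:full} is well defined.

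\textbf{Step 1: the rule is realized exactly.} Let $x^\ast$ be the $\kappa$-minimum of $\mathcal A(P_n)$, so $\kappa(x^\ast)\le\kappa(f)$. If $x^\ast=f$, nothing remains to show. Otherwise, suppose first that $d_{P_n}(x^\ast)=H$. Then $x^\ast\in\Far(P_n)$, and $f=\min_\kappa\Far$ gives $\kappa(f)\le\kappa(x^\ast)$, a contradiction. Hence $d_{P_n}(x^\ast)<H$, and Lemma~\ref{lem:focal} places $x^\ast$ in $B(f,H)$. Every occurrence of $x^\ast$ is generated by an owner in $B(f,2H)$, so $x^\ast$ appears in the sorted list. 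It then suffices to show that the loop accepts a candidate $z$ exactly when $z$ is admissible. Then the first accepted candidate in $\kappa$-order is $x^\ast$, and if none is accepted, $f$ is correct.

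For acceptance, $r_z^2=\|z-p\|^2$ for a generating owner $p$, and $r_z^2>0$ because sites are removed. Assume $z$ is admissible. Since $r_z<H\le U$, Theorem~\ref{thm:edge} puts every dangerous fragment's owner in $B(z,H+r_z)\subseteq B(z,2U)$. No dangerous fragment therefore lies outside $\mathcal L$, and the local vertex--edge test passes. Conversely, suppose no fragment outside $\mathcal L$ is dangerous and every dangerous fragment in $\mathcal L$ passes. Every nondangerous fragment satisfies \eqref{eq:vertex}--\eqref{eq:edge} trivially, so Theorem~\ref{thm:edge} gives $h(P_n\cup\{z\})^2\le r_z^2$, which is admissibility.

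The accepted point is then inserted using Theorem~\ref{thm:reconstruct}, and the records are restored as in Section~\ref{sec:geometry}. This re-establishes the invariant, since admissibility and Theorem~\ref{thm:admissible} keep $P_{n+1}$ certified. The same theorem gives $\rho(P_{n+1})\le2$. Rationality holds because every candidate is a fragment vertex or an edge midpoint of rational fragments. Determinism and autonomy follow because each step depends only on the input and the history. Inequality \eqref{aai:eq:short} holds because $\Far(P_n)\subseteq\mathcal A(P_n)$: every farthest point is a fragment vertex, hence a candidate, and is admissible.

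\textbf{Step 2: cost.} Lemma~\ref{lem:focal} bounds the number of candidate occurrences in $B(f,H)$, and hence the number of candidate tests per insertion, by $O_{\K,d}(1)$. For each test, Lemma~\ref{lem:compact} with $s=U$ and $c=1/4$ bounds $|\mathcal L|$ and the work of the sign tests. Here $\delta\ge h>U/2$, and for the singleton case only the input fragments occur. The outside check removes and reinserts $O(1)$ records in the balanced index at cost $O(\log(N+2))$.

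The accepted insertion has radius $r_{n+1}\ge H/2>U/4$ by \eqref{eq:interlacing}. Lemma~\ref{lem:compact} therefore bounds the records it replaces, and the analysis in Theorem~\ref{thm:lfpm} applies verbatim. That analysis covers local reconstruction, the $O(\log(N+2))$ index updates per insertion, and grid rebuilds costing $O(N)$ in total by \eqref{eq:grid-sum}; the certified inequality $\delta>s_j/2$ still holds at rebuilds. Summing over the first $N$ insertions gives $O(N)$ tests and local computations and $O(N\log(N+2))$ unit-cost work. Storage is $O(N)$ because at most $|\K|N$ bounded fragments are live.

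\textbf{Main obstacle.} The delicate point is the equivalence between acceptance by the test and admissibility. It relies on the grid report radius $2U$ dominating $H+r_z$, and on the outside check, which must reject only inadmissible candidates even though $\mathcal L$ may contain extra owners. The choice $r_z<H\le U$ is what secures this.
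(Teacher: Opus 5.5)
Your proposal is correct and follows the paper's proof essentially step for step. The shared route is: the admissible farthest fallback $f$; Lemma~\ref{lem:focal} placing every admissible candidate that precedes $f$ in the local list; exactness of the acceptance test via Theorem~\ref{thm:edge} together with the check outside $\mathcal L$, using $r_z<H\le U$; inductive reconstruction; and the cost bound from Lemma~\ref{lem:compact} (insertion radius above $U/4$), the balanced index, and \eqref{eq:grid-sum}. You simply spell out details the paper leaves implicit, such as why candidates preceding $f$ are nonfarthest.
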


\begin{proof}
The farthest fallback $f$ is a rational, admissible fragment vertex. Lemma~\ref{lem:focal} places every admissible candidate preceding $f$ in the local list. Theorem~\ref{thm:edge} and the check of fragments outside $\mathcal L$ test that list exactly. Thus the first candidate that passes, or $f$ if none passes, is precisely the point specified by \eqref{aai:eq:full}. This proves \eqref{aai:eq:short}. The exact update restores the fragments; rationality and the mesh bound follow inductively. The finite list and fallback ensure termination, and the rule is deterministic and autonomous.

At grid scale $U$, old sites are more than $U/2$ apart, and accepted insertions have radius at least $h(P)/2>U/4$. Lemma~\ref{lem:compact} bounds candidate generation, local tests, and fragment changes per insertion. Each maximum-index update or check of fragments outside $\mathcal L$ costs $O(\log(N+2))$ operations, while grid rebuilds cost $O(N)$ altogether by \eqref{eq:grid-sum}. There are $O(N)$ live records; temporary lists and obsolete geometry are discarded. Point output and one terminal export fit the same bounds.
\end{proof}

The minimization in \eqref{aai:eq:short} concerns the current admissible candidate set. It does not compare the cumulative output lengths of AAS and LEFPS, whose later designs and candidate sets can differ. Section~\ref{sec:arithmetic} bounds the bit cost of AAS in terms of its own generated coordinates, including work on rejected candidates and on reference farthest points that are not selected.

\begin{example}[An admissible point that is not farthest]\label{ex:aai-choice}
Consider the single-cell rectangle $\Omega=[0,2]\times[0,1]$ with $x_1=(0,0)$. LEFPS selects the unique farthest point $(2,1)$, whereas AAS selects $(1,1)$. The initial candidate set consists of the other three corners and the four edge midpoints. Using \eqref{eq:output-size}, the only candidates preceding $(1,1)$ in $\kappa$-order are $(0,1)$, $(1,0)$, and $(0,1/2)$. They fail the admissibility condition at $(2,0)$, $(2,1)$, and $(2,0)$, respectively. In each case, the indicated point is farther than the candidate's insertion radius from both $x_1$ and the candidate.

For $x=(1,1)$, the squared insertion radius is $r^2=2$. The bisector between $x_1$ and $x$ is $u+v=1$. On each of the two resulting polygons, the new squared nearest-site distance is convex. Evaluating it at the polygon vertices gives $h(\{x_1,x\})^2=2$, attained at $(2,0)$. Thus $(1,1)$ is admissible, although the old squared covering radius is $h(\{x_1\})^2=5$. Its representation is also shorter:
\[
 \lambda(1,1)=5<6=\lambda(2,1).
\]
Figure~\ref{fig:aas-choice} illustrates this first choice and the first six AAS points on the rectangle and a nonconvex cross. The subsequent sequences are determined in Subsection~\ref{subsec:rectangle-capture}.
\end{example}

\begin{figure}[!htbp]
\centering
\includegraphics[width=\textwidth]{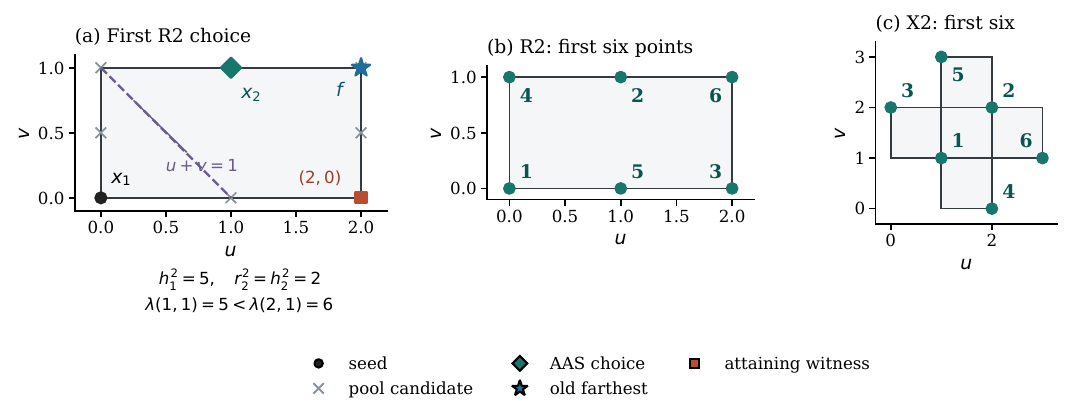}
\caption{AAS on the rectangle $[0,2]\times[0,1]$ (R2) and the
five-unit-square cross (X2). (a) From $x_1=(0,0)$, AAS selects the
nonfarthest point $x_2=(1,1)$, with $\lambda(1,1)=5$ rather than
$\lambda(2,1)=6$ for the old farthest point. The bisector $u+v=1$
and the point $(2,0)$ give $r_2^2=h(P_2)^2=2$, whereas $h(P_1)^2=5$.
(b)--(c) The first six AAS points, in insertion order, on the rectangle
and cross. The cross uses the five-square cover and initial point
$(1,1)$ in \eqref{eq:cross}.}
\label{fig:aas-choice}
\end{figure}

\section{Bit complexity and coordinate growth}\label{sec:arithmetic}

The operation counts in Sections~\ref{sec:geometry} and \ref{sec:admissible} treat rational arithmetic as having unit cost. We now account for the lengths of the numbers involved. We first prove that both algorithms can be executed in bit complexity nearly linear in their total output length $L_N$. This requires controlling both local arithmetic and comparisons with records elsewhere in the domain. We then turn to coordinate growth: we give a general denominator bound and prove that $L_N=\Theta(N\log(N+2))$ on rational interval covers.

\subsection{Arithmetic costs and explicit output}\label{subsec:bit-preliminaries}

We use the bit-complexity model of Subsection~\ref{subsec:input-output}. In particular, reading or writing $b$ bits at an $a$-bit address costs $O(a+b)$ bit operations. The cost of constructing a rational record includes reduction to lowest terms and computation of its length information. For the computations below on a bounded number of $b$-bit operands, fast multiplication~\cite{SchonhageStrassen}, continued-fraction computation~\cite{Schonhage}, greatest-common-divisor algorithms~\cite{Schonhage,Moller}, and fast integer division~\cite[Section~1.4]{BrentZimmermann} give the bound
\begin{equation}\label{eq:fastkernel}
 \mathfrak A(b)=O\bigl((b+1)\log^3(b+2)\bigr).
\end{equation}
The bit bounds use the ordered index of Subsection~\ref{subsec:priority}, rather than the balanced comparison-based indices used for the unit-cost bounds. Both implementations produce the same ordered points; a single implementation need not achieve both bounds when coordinate lengths are unrestricted.

\begin{lemma}[Lower bound for explicit output]\label{lem:encoding}
For $N$ distinct rational points in fixed dimension,
\begin{equation}\label{eq:entropy}
 N\log(N+2)=O_d(L_N),
 \qquad N\log(L_N+2)=O_d(L_N).
\end{equation}
Any algorithm that explicitly writes their reduced coordinates requires at least a constant multiple of $L_N$ bit operations.
\end{lemma}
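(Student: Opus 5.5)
The proof is a counting argument. It uses only that the $N$ points are distinct elements of $\Q^d$ and that $\lambda$ is the length of the reduced-coordinate encoding from \eqref{eq:output-size}.

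\textbf{Bounding $N$ by $L_N$.} Fix $\ell\ge1$. A point with $\lambda(x)\le\ell$ is determined by its $2d$ integers $a_{j},b_{j}$, whose bit lengths sum to at most $\ell-1$. Allowing for signs, the number of integers of bit length exactly $k$ is at most $2^{k+1}$. So the number of such points is at most the number of compositions of the lengths times $2^{O(d)}2^{\ell}$, which is $\binom{\ell+2d}{2d}2^{O(d)}2^{\ell}\le C_d^{\ell}$ for a constant $C_d$. With $M=\lceil\log_{C_d}(N/2)\rceil$, at most $\sum_{\ell<M}C_d^{\ell}\le N/2$ points (for a suitably chosen $C_d\ge2$) have $\lambda<M$. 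Hence at least $N/2$ of the points have $\lambda\ge M\gtrsim_d\log N$. This gives $L_N\ge (N/2)M$, that is, $N\log(N+2)=O_d(L_N)$. Small $N$ is covered because $\lambda\ge1$ gives $L_N\ge N$.

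\textbf{The second bound.} The bound $N\log(L_N+2)=O_d(L_N)$ follows from the first by splitting into two cases. If $L_N\le N^2$, then $\log(L_N+2)\le 2\log(N+2)+O(1)$, and the first bound applies. If $L_N>N^2$, then $\log(L_N+2)=O(\sqrt{L_N})$, so $N\log(L_N+2)\le \sqrt{L_N}\cdot O(\sqrt{L_N})=O(L_N)$.

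\textbf{The output lower bound.} In the bit model of Subsection~\ref{subsec:input-output}, writing $b$ bits costs $\Omega(b)$ bit operations. Any algorithm that explicitly writes the reduced coordinates must write each numerator and denominator. Their total length is $L_N$ minus at most $N$, and $N\le L_N$ because $\lambda\ge1$. Delimiters and signs change the encoding length only by a constant factor, as noted in Subsection~\ref{subsec:input-output}. So the cost is at least a constant multiple of $L_N$. The only step needing care is the counting bound in the first part, namely keeping $C_d$ depending only on $d$. This works because, for fixed $d$, the binomial factor $\binom{\ell+2d}{2d}$ is polynomial in $\ell$ and is absorbed into $C_d^{\ell}$.
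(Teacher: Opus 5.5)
Your proposal is correct and follows the paper's proof. It uses the same count of short rational vectors (you parametrize by $\lambda$ rather than by height, which is cosmetic), deduces the second bound from the first (your case split at $L_N=N^2$ replaces the paper's substitution $L_N=Nt$ with $\log(Nt+2)\le\log(N+2)+\log(t+2)$), and uses the same bits-written argument for the last claim.

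Two small slips need repair. First, your count of points with $\lambda\le\ell$ is already cumulative, so use $C_d^{M-1}<N/2$ directly; the sum $\sum_{\ell<M}C_d^\ell$ can exceed $N/2$ for your choice of $M$. Second, $L_N-N\ge L_N/2$ follows from $\lambda\ge d+1$, since every denominator has at least one bit; $\lambda\ge1$ alone does not give it.
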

\begin{proof}
Counting possible numerators and denominators gives at most $2^{2d(T+1)+O_d(1)}$ rational vectors of height at most $T$. For $T$ a sufficiently small positive multiple of $\log N$, this is less than $N/2$. Thus at least half of any $N$ distinct vectors have height greater than $T$, proving the first bound for large $N$; adjusting the constant covers the remaining values. For the second, write $L_N=Nt$, where $t\ge1$. The first bound gives $\log(N+2)=O_d(t)$, and hence
\[
 \log(L_N+2)\le\log(N+2)+\log(t+2)=O_d(t).
\]
The last claim follows from the number of bits written. It concerns explicit reduced coordinates, rather than symbolic formulas or shared expressions for several points.
\end{proof}

\subsection{Local operand lengths and repeated use of coordinates}
\label{subsec:reuse}

Let $S$ be the binary size of the preprocessed rational input, including the initial point and enclosing box, and put
\[
 \ell_n=1+\lceil\log_2(n+2)\rceil.
\]
A stored vertex may have been created many steps earlier, but its coordinate length can be bounded using its current defining equations. We call the set of sites occurring in these equations, including the owner when an owner distance is evaluated, a \emph{current support}. The supporting set may change; the site coordinates themselves do not. Using current equations for these bounds does not require recomputing the stored coordinates.

\begin{lemma}[Operand lengths from current supports]\label{lem:support}
Every current fragment vertex $v$ and its squared owner distance have a support $Q$ of bounded cardinality such that
\begin{equation}\label{eq:support-height}
 \height(v),\ \height(\|v-p\|^2)
 \le C_{\K,d}\left(S+1+\sum_{q\in Q}\height(q)\right).
\end{equation}
An AAS edge midpoint uses the union of the supports of its endpoints. Candidate radii, local sign tests, retained-cut and interface vertices, and feasibility tests satisfy bounds of the same form. Spatial grid coordinates require at most an additional $\ell_n$ term. For a vertex or candidate $v$ belonging to a current fragment, every supporting site $q$ satisfies $\|q-v\|\le C h(P)$ for a fixed $C$. For a trial intersection outside the fragment, the same bound holds with $v$ replaced by the fragment's owner.
\end{lemma}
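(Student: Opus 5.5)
The plan is to bound heights through Cramer's rule applied to the defining linear systems, and to get the support-proximity claim from the locality facts \eqref{eq:locality} and Lemma~\ref{lem:compact}.

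\textbf{Step 1: the vertex system.} By the discussion after Lemma~\ref{lem:compact}, every current fragment vertex $v$ keeps a basis of $d$ independent active equations. Each equation is either a fixed input inequality of $\sigma$ or a bisector equation
\[
2(q-p)\cdot y=\|q\|^2-\|p\|^2 .
\]
Here $p$ is the owner and $q$ is a neighbour supporting a facet. We let the current support $Q$ consist of the owner together with the sites appearing in these at most $d$ equations, so $|Q|\le d+1$. Then $v$ solves a $d\times d$ rational system.

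\textbf{Step 2: clear denominators and apply Cramer's rule.} Clear denominators row by row. The product of the denominators of $q$ and $p$ has height at most $\height(q)+\height(p)$ up to constants, and the same holds for the numerators of $\|q\|^2$. Multiplying a bisector row by the common denominator gives integer entries of height $O_d(\height(p)+\height(q))$. Input rows have height $O(S)$.

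By Cramer's rule, each coordinate of $v$ is a ratio of two $d\times d$ determinants. The Hadamard bound gives each determinant height at most the sum of the row heights plus $O_d(1)$. This is \eqref{eq:support-height} for $\height(v)$, since reduction only lowers heights.

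The squared distance $\|v-p\|^2$ is a bounded polynomial expression in $v$ and $p$, so its height is within a constant factor of $\height(v)+\height(p)$.

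\textbf{Step 3: the derived quantities.} Each of the following is a bounded-degree rational expression in quantities already bounded, with $x$ and the relevant sites added to the support.
\begin{itemize}
\item An edge midpoint is $(u+v)/2$, which adds at most one bit to the union of the endpoint supports.
\item The candidate radius $r_z^2$ uses the owner of $z$.
\item The sign tests \eqref{eq:vertex}--\eqref{eq:edge} use $Q_{F,e}$, built from $u$, $v$, $p$, $x$ and $t$.
\item A retained-cut or interface vertex comes from the same kind of system, with one row replaced by $\Pi_{x,p}$.
\item Feasibility tests evaluate a constraint at a vertex.
\end{itemize}
Bounded degree keeps every bound linear in the support heights.

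Grid-cell indices are floors of a coordinate divided by a dyadic scale $s$. Since $s^{-d}=O(n)$ by \eqref{eq:grid-sum} and \eqref{eq:volume-scales}, this adds an $O(\ell_n)$ term.

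\textbf{Step 4: proximity of the support.} Take $v\in F_{\sigma,p}$. Then $\|v-p\|\le h(P)$ by definition of the fragment. A supporting site $q$ of a Voronoi facet through $v$ satisfies $\|v-q\|=\|v-p\|$, because $v$ lies on that bisector. Hence $\|q-v\|\le h(P)$ and $C=1$ works for vertices.

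For midpoints, the endpoint supports are within $h(P)$ of their endpoints. The endpoints are within $2h(P)$ of each other, so $C=2$ works.

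For a trial intersection outside the current fragment, we measure from the owner $p$ instead. The supporting sites of $F$'s facets are within $2h(P)$ of $p$, as in the proof of Lemma~\ref{lem:compact}. The inserted point $x$ is within $2h(P)$ of the donor owner by \eqref{eq:locality}, or by Theorem~\ref{thm:edge} for tested candidates.

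\textbf{Main obstacle.} The subtle step is Step 1 in degenerate configurations, where more than $d$ constraints are active at $v$. We must make sure the retained basis genuinely determines $v$ and that its rows are currently valid equations. Otherwise the stored coordinates would not be bounded by current supports but by historical ones.

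We handle this through the exact rank selection described after Lemma~\ref{lem:persistence}. When a supporting site's facet disappears in a later update, the vertex either disappears as well or is still active on another current constraint that can replace that row. Because sites never move, a current equation through $v$ defines the same hyperplane as before, so $v$ is unchanged by the replacement.
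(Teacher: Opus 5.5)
Your proposal is correct and follows essentially the paper's route: Cramer's rule on a current basis of $d$ independent active equations, bounded-degree formulas for midpoints, radii, sign tests and cut vertices, $h_n\asymp n^{-1/d}$ for grid addresses, and locality from the fragment geometry. Your equidistance argument, which gives $C=1$ at vertices, slightly sharpens the paper's appeal to the neighbor bound of Lemma~\ref{lem:compact}. One justification needs adjusting: Theorem~\ref{thm:edge} localizes only admissible candidates, and \eqref{eq:locality} only donor owners. For all classified or tested fragments, including rejected AAS candidates and reported non-donor owners, take the proximity of $x$ or $z$ to their owners from the grid report itself, which inspects boundedly many cubes of side less than $2h(P)$ around that point.
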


\begin{proof}
A vertex solves $d$ independent active equations, each from an input facet or an owner bisector. Clear denominators and apply Cramer's rule. The resulting determinants have fixed degree in a bounded number of site coordinates, giving \eqref{eq:support-height}; reduction to lowest terms cannot increase these lengths. Midpoints, squared distances, and the tests of Section~\ref{sec:admissible} use boundedly many such operations. The same argument applies to trial intersections that fail the feasibility tests.

The local updates restore valid current supporting equations even when coordinates are reused. Lemma~\ref{lem:compact} places the sites in each constraint family within a fixed multiple of the covering radius of its owner. A feasible fragment point is also within that distance of its owner. Finally, $h_n\asymp n^{-1/d}$ bounds the binary lengths of grid addresses and grid endpoints by $O(S+\ell_n)$.
\end{proof}

At any fixed covering scale, packing limits how often a site can be used in local updates; see also \cite[Section~3.1]{HarPeledMendel}. An older site can participate at more scales. To account for this, define
\begin{align}
 w_{i,N}&=1+\log_2\frac{N+1}{i}, &
 W_N&=\sum_{i=1}^Nw_{i,N}\lambda_i,\label{eq:age-weight}\\
 \overline W_N&=\sum_{i=1}^Nw_{i,N}(\lambda_i+S+\ell_i).
 \label{eq:augmented-weight}
\end{align}
These quantities satisfy
\begin{equation}\label{eq:weight-bounds}
 L_N\le W_N\le(1+\log_2(N+1))L_N,
 \qquad \sum_{i=1}^Nw_{i,N}=O(N).
\end{equation}
Thus $\overline W_N=\widetilde O(L_N)$ on fixed input. Here and below, $\widetilde O$ suppresses fixed powers of logarithms of the point count and the total explicit output length, as in Section~\ref{sec:intro}.

For the implementations under consideration, a \emph{local computation} $\mathcal I$ is a numerical calculation or read of boundedly many nearby records. Its support $Q(\mathcal I)$ consists of the sites used to bound its operand lengths. Comparisons with distant records are treated separately in the next subsection.

\begin{theorem}[Repeated use of local coordinates]\label{thm:reuse}
For either LEFPS or AAS, let $m_N(x_i)$ be the number of local computations through $P_N$ whose support contains $x_i$. Then
\begin{equation}\label{eq:reuse}
 m_N(x_i)\le C_{\Omega,d,\K}\, w_{i,N}.
\end{equation}
Consequently, for arbitrary nonnegative weights $\omega_i$,
\begin{equation}\label{eq:weighted-reuse}
 \sum_{\mathcal I}\sum_{x_i\in Q(\mathcal I)}\omega_i
 \le C_{\Omega,d,\K}\sum_{i=1}^Nw_{i,N}\omega_i.
\end{equation}
The sum of all local operand lengths is $O(\overline W_N)$. This includes rejected candidates, farthest points used to guide AAS but not selected, and reads of stored numerical values.
\end{theorem}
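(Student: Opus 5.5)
The plan is a scale-by-scale packing argument. Fix $N$ and a site $x_i$ with $i\le N$. Every local computation $\mathcal I$ is performed during some insertion step $n+1$, with $n\le N$. By Lemma~\ref{lem:support}, each site $q$ in $Q(\mathcal I)$ lies within $Ch(P_n)$ of a point $v$ of the computation. Such a $v$ is a vertex, a candidate, or the owner of a fragment being tested, and it lies in a bounded neighbourhood of the insertion locus. For LEFPS the locus is the inserted point $x_{n+1}$, by \eqref{eq:locality}. For AAS it is the reference point $f(P_n)$ and the tested candidates, which lie in $B(f(P_n),H)$ by Lemma~\ref{lem:focal}. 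The owners reported around each candidate lie in $B(z,2U)$. Hence there is a fixed constant $A$ such that $x_i\in Q(\mathcal I)$ forces $\|x_i-y_{n}\|\le A h_n$. Here $y_n$ is $x_{n+1}$ for LEFPS and $f(P_n)$ for AAS. By Theorems~\ref{thm:lfpm} and~\ref{thm:aai}, each step carries out only a bounded number of local computations. So it suffices to bound the number of steps $n\in\{i,\dots,N\}$ with $\|x_i-y_n\|\le Ah_n$ by $C w_{i,N}$. Steps $n<i$ cannot involve $x_i$, except at $n=i-1$, which contributes one.

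Group the steps into dyadic scale classes $\{n: 2^{-k-1}<h_n\le 2^{-k}\}$. Within one class we have $\delta(P_n)\ge h_n>2^{-k-1}$ by \eqref{eq:interlacing}. For LEFPS, every inserted point $x_{n+1}$ of the class with $\|x_i-x_{n+1}\|\le A2^{-k}$ is a distinct site. These sites are pairwise at distance at least $2^{-k-2}$: the insertion radii are at least $h_n/2$, and later separations are at least $h_N/2$ within the class. A packing bound therefore caps their number by a constant depending only on $A$ and $d$. For AAS, the reference point $f(P_n)$ lies within $H$ of the accepted point $x_{n+1}$, again by Lemma~\ref{lem:focal}, because $f$ covers the old farthest point. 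So the same packing applies to $x_{n+1}$ with $A$ replaced by $A+1$. Either way, each scale class contributes $O(1)$ computations involving $x_i$.

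It remains to count the scale classes that occur between steps $i$ and $N$. Since $h_n\asymp n^{-1/d}$ by \eqref{eq:volume-scales} and Theorem~\ref{thm:admissible}, and $h_n$ is nonincreasing, the number of classes with $i\le n\le N$ is at most
\[
\frac1d\log_2\frac{N+1}{i}+O(1)\le C\,w_{i,N}.
\]
This yields \eqref{eq:reuse}. The weighted bound \eqref{eq:weighted-reuse} follows by exchanging the order of summation, $\sum_{\mathcal I}\sum_{x_i\in Q(\mathcal I)}\omega_i=\sum_i m_N(x_i)\omega_i$. For the statement about operand lengths, Lemma~\ref{lem:support} bounds each local operand by $C(S+1+\sum_{q\in Q(\mathcal I)}\height(q))$, plus $\ell_n$ for grid data. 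Using $\height(x_q)\le\lambda_q$, applying \eqref{eq:weighted-reuse} with $\omega_i=\lambda_i+S+\ell_i$, and noting that every computation has nonempty support, gives $O(\overline W_N)$. The additive $S$ and $\ell_n$ terms are absorbed because the supports are nonempty and $\ell_n\le\ell_i+\log_2((N+1)/i)+O(1)$ for $n\ge i$. The latter inequality is where the weight $w_{i,N}$ enters again.

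The main obstacle is making the locality uniform across all kinds of computation: rejected AAS candidates, reads of stored vertices created long ago, and trial intersections outside fragments. The risk is that a stored vertex was built from a support that is no longer near the current scale. I would handle this through the "current support" convention of Lemma~\ref{lem:support}. The bounds use the equations that are valid at step $n$, and those involve only sites within $Ch(P_n)$. A second delicate point is the packing within a scale class for AAS, which requires that the inserted points remain separated at the class scale. This follows from $r_{n+1}\ge h_n/2$ in \eqref{eq:interlacing}.
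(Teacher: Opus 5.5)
Your proposal is correct and follows the paper's proof essentially step for step. Locality comes from Lemma~\ref{lem:support}; packing uses the accepted points, which are separated by at least $h_n/2$ within each dyadic scale; for AAS the computations are transferred from $f(P_n)$ to the accepted point via Lemma~\ref{lem:focal}; $O(1+\log_2((N+1)/i))$ scales are counted; and the weighted bound follows by double counting.

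You leave out three non-local bookkeeping items that the paper handles explicitly:
\begin{itemize}
\item Each grid rebuild reads every site once, regardless of its distance to $y_n$. So your ``bounded number of local computations per step'' claim fails at rebuild steps. This adds only one read per site per scale.
\item The final state export adds a bounded number of records per site.
\item Grid initialization of empty cells has empty support. It is charged directly as $O(N(S+\ell_N))$ via \eqref{eq:grid-sum}, not through your nonempty-support remark.
\end{itemize}
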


\begin{proof}
Fix a period during which $U/2<h(P)\le U$ for one dyadic scale $U$. For LEFPS, every insertion that uses a site $p$ in a local computation lies within $AU$ of $p$, for a fixed $A$ (Lemma~\ref{lem:support}). The inserted points are more than $U/2$ apart, so only boundedly many can lie in this ball. Each insertion makes boundedly many local computations, and each grid rebuild reads a site once at that scale. In the final state export, each site supports only boundedly many records.

For AAS, let $f=f(P)$ be the reference farthest point and $x$ the accepted point. We have $\|x-f\|\le H\le U$. The report around $f$ lies in $B(f,C_0U)$, and any generated occurrence $z$ is within $H$ of its owner. Hence
\[
 \|z-x\|\le(C_0+2)U.
\]
This includes occurrences discarded before testing. Their supports, and the supports in subsequent local covering tests, are therefore within a fixed multiple of $U$ of the accepted point. Accepted points are $U/4$-separated and each step has bounded work, so the same packing bound covers rejected candidates and reference points that are not selected.

Since $h_n\asymp n^{-1/d}$, only $O(1+\log((N+1)/i))$ dyadic scales are visited after $x_i$ is inserted. This proves \eqref{eq:reuse}; double counting gives \eqref{eq:weighted-reuse}. Apply it with $\omega_i=\lambda_i$ and Lemma~\ref{lem:support}. Input data, scales, identifiers, and grid initialization add $O(N(S+\ell_N))$, absorbed by $\overline W_N$.
\end{proof}

\subsection{Exact ordering with variable-length keys}\label{subsec:priority}

A local update may compare a short rational number with a much longer one stored elsewhere. Repeatedly reading the longer number would escape the local reuse bound. We avoid this by encoding rational order as lexicographic order on binary strings using continued fractions~\cite{KornerupMatula}. A compressed prefix index~\cite{Morrison} then charges each operation to the key supplied by that update or query, even when stored keys are much longer. A code is \emph{prefix-free} if no code word is a proper prefix of another.

\begin{lemma}[Exact ordering charged to supplied-key lengths]
\label{lem:priority}
Let $z$ be a tuple containing a fixed number of rational numbers and nonnegative integer identifiers, ordered lexicographically with a prescribed direction for each entry. Stored tuples are immutable, and $b(z)$ denotes the total binary length of their entries. There is a prefix-free encoding that preserves this order, has length $O(b(z))$, and can be constructed in $O(\mathfrak A(b(z)))$ bit operations. With $a$-bit addresses, an index update costs $O((b(z)+1)a)$ after encoding; a minimum or maximum is returned by reference. If the queried scalar is the first tuple entry, testing whether any value outside a bounded collection $\mathcal L$ is strictly greater than $t$, or strictly less than $t$, costs
\begin{equation}\label{eq:priority-query}
 O\left(a\left[1+b(t)+\sum_{z\in\mathcal L}(b(z)+1)\right]\right)
\end{equation}
after encoding $t$. The tuple may include the representation length and coordinates used by the AAS key $\kappa$.
\end{lemma}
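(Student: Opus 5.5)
We build the encoding entrywise and then concatenate, so that lexicographic order on tuples becomes lexicographic order on bit strings.

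\emph{Rational entries.} For a reduced rational $u=a/b$, we first encode the sign and the integer part $\lfloor u\rfloor$. For the integer part we use an order-preserving prefix-free code of length $O(\bl(\lfloor u\rfloor)+1)$: a unary length prefix, then the bits, with the complement taken for negative values. The fractional part is written as a continued fraction $[0;a_1,\dots,a_k]$, which we normalize so the expansion is unique, for example by forcing the last partial quotient to be at least $2$ or by appending a terminator. Following \cite{KornerupMatula}, each partial quotient is encoded by an order-preserving self-delimiting code. The bits of every other quotient are complemented, since increasing $a_j$ moves the value in alternating directions. The termination symbol must sort correctly relative to any continuation, which is the standard Stern--Brocot path argument. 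Because $\sum_j\log(a_j+1)=O(\height(u))$, the total length is $O(b(u))$.

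\emph{Integer identifiers and reversed entries.} Nonnegative identifiers use the same length-prefixed code. An entry with reversed direction is handled by complementing its code word. Concatenating prefix-free, order-preserving codes is again prefix-free and realizes the lexicographic tuple order. Constructing the code costs $O(\mathfrak A(b(z)))$, because the continued-fraction expansion of a $b$-bit rational is computable in that time \cite{Schonhage}.

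\emph{Index.} We store the encoded keys in a compressed binary trie (a Patricia tree \cite{Morrison}) whose leaves carry references to the tuples. Insertion or deletion walks down the trie comparing only bits of the supplied key. Compressed edges store offsets into the immutable stored keys, so each comparison is a single-bit read at an $a$-bit address. The cost is therefore $O((b(z)+1)a)$, and no stored key is read beyond the supplied key's length. The minimum or maximum is found by following the leftmost or rightmost path, one step per branching node. To keep this within the bound, we maintain pointers at the root to the extreme leaves and return them by reference.

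\emph{Threshold query.} For the strict-greater test, we encode $t$ by itself as a one-entry tuple. This is an order-preserving prefix of the first-entry encoding, which uses prefix-freeness of the first-entry code. We then walk down the trie along the code of $t$. At each node where the path of $t$ takes the left branch, the right subtree consists entirely of values greater than $t$. Comparing the full first entry with $t$ handles the boundary case of equality with $t$, which is not strict. Such a right subtree contains a value outside $\mathcal L$ unless all of its leaves lie in $\mathcal L$. We decide this by storing subtree leaf counts and subtracting the number of $\mathcal L$ elements whose codes fall in that subtree. Those elements are located by walking their own codes, which costs $O(a(b(z)+1))$ each and gives \eqref{eq:priority-query}. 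Maintaining the counts adds only $O(a)$ per node visited along an update path. The strict-less test is symmetric.

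\emph{Main obstacle.} I expect the hard step to be verifying that the continued-fraction code is order-preserving and prefix-free, including the terminal conventions. The charging of trie operations to the supplied key is a routine consequence of the compressed-trie structure.
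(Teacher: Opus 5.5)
\textbf{Gap: maintaining the extremes.} Your encoding is essentially the paper's, and so is charging trie descents to the supplied key via prefix-freeness. The failure is in how you maintain the minimum and maximum. You keep pointers to the extreme leaves only at the root, but you do not say how to restore them after the current extreme is deleted. This happens routinely: the selected farthest vertex is cut off its fragment, so the first maximum record is replaced.

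The new minimum is the minimum of the deleted leaf's sibling subtree. A leftmost walk to find it costs the depth of that leaf. That depth is bounded only by the length of the new extreme's (possibly very long) key, not by $b(z)$ of the key you deleted. So the update bound $O((b(z)+1)a)$ fails as written.

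The paper fixes this by storing, at every branching node, references to the smallest and largest leaves of its subtree. These are repaired bottom-up from the child references along the update path, at $O(1)$ per visited node, so no remote key is walked or reread. The root's references then return the extremes. The same per-node references also supply a surviving key for your edge-label offsets when the key they point into is deleted, which your sketch also needs.

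\textbf{A smaller slip in the encoding.} Appending a terminator is not an alternative to normalization. Without a fixed expansion, equal values such as $[0;2]=[0;1,1]$ receive different codes, and the lexicographic tuple order breaks. You need the canonical expansion (last quotient at least $2$) \emph{together with} the terminator, as in the paper. Splitting a signed value into $\lfloor u\rfloor$ and a fractional part, instead of sign and magnitude, is a harmless variation.

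\textbf{A different route for the exclusion query.} The paper's query temporarily deletes the records of $\mathcal L$. It then compares only the first-entry code of the surviving extreme with the code of $t$, reading $O(1+b(t))$ bits by prefix-freeness, and reinserts $\mathcal L$. You instead leave the index untouched. You decide existence from subtree leaf counts of the off-path subtrees along the code of $t$, minus the members of $\mathcal L$ found there.

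This is correct and meets \eqref{eq:priority-query}, provided your descent compares every bit of $t$ against the compressed edge labels. A pure Patricia descent that tests only branching positions would first need the paper's representative-leaf comparison; otherwise the off-path subtrees need not lie on one side of $t$. Your version costs an extra counter per node but makes queries read-only. The paper's version needs no counters and reuses the update and per-node extreme machinery that you must add anyway.
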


\begin{proof}
\emph{Order-preserving encoding.}
For $r\ge0$, use the canonical finite continued fraction $[a_0;\ldots,a_m]$, with $a_m\ge2$ when $m>0$. For a nonnegative integer $u$, set
\[
 U(u)=1^\ell0\operatorname{bin}_\ell(u+1),
 \qquad \ell=\bl(u+1),
\]
where $\operatorname{bin}_\ell$ denotes an $\ell$-bit binary word. The code $U$ is prefix-free and increasing: different lengths are ordered by the initial runs of ones, and equal lengths by the binary suffix. Its length is $2\ell+1$.

At position $j$, encode the coefficient $a_j$ as $0U(a_j)$.
Append a termination marker $1$ at position $m+1$, and complement each coefficient code or marker at an odd position (interchanging $0$ and $1$). Continued-fraction order alternates with the position; termination is the limit in which the next tail tends to $+\infty$~\cite{KornerupMatula}. The first unequal coefficient or marker therefore gives the numerical order, including when one expansion terminates first. Canonical expansions exclude a second representation ending in one. The completed codes are prefix-free. For signed rationals, prefix a nonnegative code by $1$ and a complemented positive-magnitude code by $0$ for a negative value. Encode nonnegative identifiers with $U$. Concatenate the codes for the tuple entries, complementing each code whose order is descending. This gives the desired tuple order.

\emph{Encoding length.}
For $r=p/q\ge0$, let $q_j$ be the continued-fraction denominator continuants. Their recurrence gives $q_m\ge\prod_{j=1}^m a_j$ and a Fibonacci lower bound on $q_m$ in terms of $m$. Thus
\[
 \sum_{j=1}^m\log_2a_j\le\log_2q,
 \qquad m=O(1+\log q).
\]
Since $\bl(a_j+1)\le\log_2a_j+2$ for $j\ge1$, these coefficient codes have total length $O(1+\log q)$. The integer part uses $O(1+\log(|p|+1))$ bits. Signs and the fixed number of tuple entries change only the constant. Fast continued-fraction computation gives the $O(\mathfrak A(b(z)))$ construction cost.

\emph{Index updates.}
Store the codes at the leaves of a compressed binary prefix index~\cite{Morrison}: branching nodes record the bit positions where codes differ, skipping shared runs of bits. Each node stores that position, its two child references, and references to its smallest and largest leaves. Branching positions strictly increase along a path. 

Number bit positions from zero. For a supplied $K$-bit word $w$, follow its bits while the branching position is less than $K$, then choose a representative leaf by reference. Prefix-freeness ensures that any distinct representative differs from $w$ at a position less than $K$. Finding the first difference therefore reads only $O(K)$ bits, even from a much longer representative. A second descent locates the insertion point. Repairing extreme references uses child references, without rereading remote keys. Deletion visits only branching positions below the deleted code's length and suppresses its parent; replacement is a deletion and insertion. Thus each update visits $O(K+1)$ nodes and bits.

With $a$-bit addresses, offsets, and branching metadata, the cost is $O((K+1)a)$ after encoding, or $O(\mathfrak A(b(z))+(b(z)+1)a)$ including encoding and transfers. An extreme is returned by reference; reading its contents is charged separately.

\emph{Queries excluding local records.}
Temporarily remove $\mathcal L$, keeping its codes for restoration. Removal and restoration cost $O(a\sum_{z\in\mathcal L}(b(z)+1))$. If no record remains, the answer is false. Otherwise take the appropriate extreme by reference and compare its scalar prefix with the code of $t$. Unequal scalar codes differ before the shorter ends; agreement throughout the threshold code implies equality by prefix-freeness. Thus the comparison reads only $O(1+b(t))$ bits and never needs the coordinate or identifier suffix of a long remote record. This proves \eqref{eq:priority-query}.
\end{proof}

Apply the lemma to the ascending fragment keys
\[
\begin{aligned}
 K_L(F)&=(-\mu_F^2,c_F,\operatorname{id}(\sigma),\operatorname{id}(p)),\\
 K_A(F)&=(-\mu_F^2,\lambda(c_F),c_F,
                 \operatorname{id}(\sigma),\operatorname{id}(p)).
\end{aligned}
\]
Here $c_F$ is the lexicographically smallest maximizing vertex for LEFPS, and the $\kappa$-smallest maximizing vertex for AAS. The identifiers distinguish stored occurrences without changing geometric ties. Only local candidate lists merge equal points. Both the old and new keys are charged when a record changes. Reading a selected point or the AAS reference point, and comparing locally generated candidates, are covered by Theorem~\ref{thm:reuse}.

The AAS test for fragments outside a local collection uses
\[
 \exists F\notin\mathcal L:\ \mu_F^2>t
 \quad\Longleftrightarrow\quad
 \min_{F\notin\mathcal L}(-\mu_F^2)<-t.
\]
An empty remaining index returns false. Excluded records keep their references and cached codes and are restored after the query. In the bit-complexity implementation, grid-scale comparisons are performed as scalar-prefix queries in the encoded index, using squared dyadic thresholds. The full value of a remote maximum is not read merely to decide whether the grid scale must be changed.

\subsection{Bit complexity in terms of the output length}
\label{subsec:output-sensitive-bound}

We now combine the local reuse bound with the ordered-index bound.

\begin{theorem}[Bit complexity relative to the output length]
\label{thm:output-sensitive}
Each of LEFPS and AAS admits an exact implementation satisfying
\begin{align}\label{eq:output-sensitive}
 T^{\rm bit}(N)
 &\le T^{\rm bit}_{\rm pre}
   +O_{\Omega,d,\K}\left(\overline W_N
                   \log^4(L_N+S+N+2)\right),\\
 S^{\rm bit}(N)&=\widetilde O_{\Omega,d,\K}(L_N+NS).\notag
\end{align}
Here $T^{\rm bit}_{\rm pre}$ is the preprocessing cost, $S^{\rm bit}(N)$ is the bit storage, and $L_N$ is the output length of the algorithm under consideration. In particular, on each fixed rational input, $T^{\rm bit}(N)=\widetilde O(L_N)$. The bounds include rejected trials, explicit reduced point output, and one export of the maintained state for continuation.
\end{theorem}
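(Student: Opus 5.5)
The plan is to decompose the total bit work of either algorithm into four categories: local computations, ordered-index operations, spatial-grid maintenance, and output/export. Each category will be charged against $\overline W_N$ times a polylogarithmic factor. The implementation is the one already fixed: the same local reconstruction (Theorem~\ref{thm:reconstruct}), vertex--edge tests (Theorem~\ref{thm:edge}), and localization (Lemma~\ref{lem:focal}), but with the balanced comparison index replaced by the encoded prefix index of Lemma~\ref{lem:priority} applied to the keys $K_L$ or $K_A$. Correctness and identity of the ordered output then follow from Theorems~\ref{thm:lfpm} and~\ref{thm:aai}, since only the data structure changes, not the comparisons' outcomes.

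\emph{Local work.} Every arithmetic step occurs inside a local computation $\mathcal I$ with support $Q(\mathcal I)$ of bounded size, so its operands have length $b_{\mathcal I}=O(S+\ell_N+\sum_{q\in Q(\mathcal I)}\height(q))$ by Lemma~\ref{lem:support}. Each computation uses boundedly many kernel operations, hence costs $O(\mathfrak A(b_{\mathcal I}))=O(b_{\mathcal I}\log^3(b_{\mathcal I}+2))$ by \eqref{eq:fastkernel}. Summing $b_{\mathcal I}$ over all $\mathcal I$, and using \eqref{eq:weighted-reuse} with $\omega_i=\lambda_i+S+\ell_i$, gives $O(\overline W_N)$, as already stated in Theorem~\ref{thm:reuse}. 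Since each $b_{\mathcal I}\le O(L_N+S+N)$, the logarithmic factor is at most $\log^3(L_N+S+N+2)$. Memory addressing contributes an extra factor $a=O(\log(L_N+NS+2))$ per bit read, which produces the fourth logarithm.

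\emph{Index work.} Each record creation, deletion, or change is triggered by a local update and involves a key built from the local data of that update (the old key was built by an earlier local computation and is charged there, as the text notes). By Lemma~\ref{lem:priority}, encoding costs $\mathfrak A(b(z))$ and an update costs $O((b(z)+1)a)$, so these are again charged to local supports. The AAS outside-$\mathcal L$ queries and the grid-scale threshold queries cost \eqref{eq:priority-query}, which depends only on $b(t)$ and the local records. Reading the first record returns a reference, and the coordinates it reads are the selected or reference point, already covered by Theorem~\ref{thm:reuse}.

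The main obstacle is this charging step. It must show that no operation reads a remote long key. The key facts are the prefix-free query bound and the fact that selection reads only the winning record, which is local to the next insertion. Grid rebuilds cost $O(\sum_j n_j(S+\ell_N))$ by \eqref{eq:grid-sum} and Lemma~\ref{lem:support}, absorbed by $\overline W_N$.

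\emph{Output and storage.} Output of reduced coordinates costs $O(L_N)$, and the terminal export reads $O(N)$ bounded records whose lengths are charged to supports. That export is bounded by $O(\overline W_N)$ via the final-state clause of Theorem~\ref{thm:reuse}. Storage holds $O(N)$ live records, each of length $O(S+\ell_N+\sum_{q\in Q}\height(q))$. Each site lies in the supports of boundedly many live records by packing at the current scale, giving $O(L_N+NS+N\ell_N)$ bits plus addresses, which is $\widetilde O(L_N+NS)$. Finally, \eqref{eq:weight-bounds} and Lemma~\ref{lem:encoding} give $\overline W_N=\widetilde O(L_N)$ on fixed input, since $N\ell_N=O(L_N)$. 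This yields $T^{\rm bit}(N)=\widetilde O(L_N)$.
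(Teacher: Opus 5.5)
Your proposal is correct and follows essentially the same route as the paper. Both arguments bound all local operand lengths by $O(\overline W_N)$ via Lemma~\ref{lem:support} and Theorem~\ref{thm:reuse}, pay three logarithms for the arithmetic kernel and one for $O(\log(L_N+S+N+2))$-bit addresses, charge index updates and exclusion queries to supplied keys and local collections via Lemma~\ref{lem:priority}, and bound storage by packing.

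The differences are minor. You amortize each deleted key against its creation, whereas the paper charges both old and new keys when a record changes; both work, since each immutable key is permanently deleted at most once and temporary exclusions are charged to the local collection. You also omit the paper's brief remark that cumulative buffers, numerical workspace, and capacity expansion keep storage and bit offsets polynomial in $L_N+S+N+2$.
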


\begin{proof}
Put $Z_N=L_N+S+N+2$. Lemma~\ref{lem:support} and Theorem~\ref{thm:reuse} bound the total local operand length by $O(\overline W_N)$. Changed maxima, trial thresholds, and local comparisons use the same supports. Charging both old and new replacement keys, but no new encoding for unchanged maxima, gives the same bound for all supplied key lengths. Each operand has $O_I(Z_N)$ bits, so \eqref{eq:fastkernel} adds at most three logarithmic factors.

Lemma~\ref{lem:priority} charges index updates and exclusion queries to these supplied keys, local collections, and thresholds. Selected points and their supports are local to the insertion; Theorem~\ref{thm:reuse} also covers AAS reference points that are not selected. No uncharged complete remote key is read.

Packing bounds each site's multiplicity in the live geometry and keys. Their total numerical length is $O(L_N+NS+N\ell_N)$, with $O(N)$ index nodes; Lemma~\ref{lem:encoding} absorbs $N\ell_N$. Cumulative buffers are bounded by the operand and key sums, and numerical workspace by numerical work. Storage and bit offsets are therefore polynomial in $Z_N$, so addresses have $O(\log Z_N)$ bits. Index access and storage expansion by a fixed factor when capacity is exhausted fit within the fourth logarithmic factor. Reduced output and one pass over the live records, codes, links, and continuation data satisfy the same bounds and require no extra insertion.
\end{proof}

The theorem bounds execution cost in terms of the generated coordinates. In particular, near-linear bit complexity in $N$ is possible exactly when the total coordinate length is itself near-linear in $N$.

\begin{corollary}[When near-linear bit complexity is possible]
\label{cor:canonical-criterion}
For either specified LEFPS or AAS sequence on fixed input, an implementation using $N\log^{O(1)}(N+2)$ bit operations exists if and only if $L_N=N\log^{O(1)}(N+2)$.
\end{corollary}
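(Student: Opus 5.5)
The corollary is an equivalence, so I would prove the two directions separately. The necessity direction uses Lemma~\ref{lem:encoding}; the sufficiency direction uses Theorem~\ref{thm:output-sensitive}. Throughout, the input $I$ is fixed, so $S$, $T^{\rm bit}_{\rm pre}$ and all constants depending on $\Omega,d,\K$ are $O(1)$.

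\emph{Necessity.} Suppose some implementation of the specified LEFPS or AAS sequence uses $N\log^{O(1)}(N+2)$ bit operations. The algorithms are required to output each point in reduced rational coordinates. By the last claim of Lemma~\ref{lem:encoding}, any such implementation therefore performs at least $cL_N$ bit operations. Hence $L_N\le C\,N\log^{k}(N+2)$. The matching lower bound $L_N\ge N$ is immediate from $\lambda_i\ge1$, and $N\log(N+2)=O(L_N)$ holds by \eqref{eq:entropy}. Together these give $L_N=N\log^{O(1)}(N+2)$. One subtlety needs care. The specified sequence is fixed independently of the implementation, so $L_N$ is a property of the sequence alone. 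This is why a lower bound derived from one implementation constrains every other implementation.

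\emph{Sufficiency.} Assume $L_N\le N\log^{k}(N+2)$ and use the implementation from Theorem~\ref{thm:output-sensitive}. I need to bound $\overline W_N$ and the logarithmic factor $\log^4(L_N+S+N+2)$.

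The logarithmic factor is easy. Since $L_N\le N^{2}$ for large $N$, we have $\log(L_N+S+N+2)=O(\log(N+2))$.

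For $\overline W_N$, I split it as $W_N+\sum_i w_{i,N}(S+\ell_i)$. By \eqref{eq:weight-bounds}, $W_N\le(1+\log_2(N+1))L_N$. The second part satisfies $\sum_i w_{i,N}(S+\ell_i)\le(S+\ell_N)\sum_i w_{i,N}=O(N\log(N+2))$, again using $\sum_i w_{i,N}=O(N)$ from \eqref{eq:weight-bounds}. Combining these, $T^{\rm bit}(N)=O(N\log^{k+5}(N+2))$.

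The step most likely to need care is bookkeeping rather than mathematics. The $\widetilde O$ in Theorem~\ref{thm:output-sensitive} hides logarithms of $L_N$ as well as of $N$, so I must make explicit that $\log L_N=O(\log N)$ under the hypothesis. I must also check that the preprocessing term and the factors from \eqref{eq:fastkernel} do not reintroduce any dependence on $L_N$ beyond polylogarithmic factors. Using the explicit form \eqref{eq:output-sensitive}, with its fourth-power logarithm, rather than the $\widetilde O$ statement avoids both issues.
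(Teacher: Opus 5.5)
Your proposal is correct and follows the paper's own argument: necessity from the explicit-output lower bound in Lemma~\ref{lem:encoding}, and sufficiency from Theorem~\ref{thm:output-sensitive} combined with \eqref{eq:weight-bounds}. Your explicit bounds on $\overline W_N$ and on $\log(L_N+S+N+2)=O(\log(N+2))$ supply the bookkeeping that the paper's two-line proof leaves implicit.
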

\begin{proof}
Necessity follows from Lemma~\ref{lem:encoding}. Sufficiency follows from \eqref{eq:weight-bounds} and Theorem~\ref{thm:output-sensitive}.
\end{proof}

The local reuse bound alone does not establish this result for an implementation using ordinary rational comparisons: their cross products may repeatedly involve long keys stored elsewhere. The encoded index controls this additional cost.

\begin{corollary}[Bit complexity at logarithmic coordinate height]
\label{thm:log-heights}
If a specified LEFPS or AAS sequence satisfies $\height(x_i)=O(\log(i+2))$, its encoded-index implementation uses
\[
 O\bigl(N\ell_N^2+N\ell_N\log^3(\ell_N+2)\bigr)
       =O(N\log^2(N+2))
\]
bit operations, including rejected work and one final state export.
\end{corollary}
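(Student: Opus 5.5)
The plan is to substitute the height hypothesis into Theorem~\ref{thm:output-sensitive}, but with a sharper accounting of the logarithmic factors. The generic bound there carries $\log^4(L_N+S+N+2)$, which alone would give only $O(N\log^5 N)$. So we return to the ingredients of that proof and track each operand length separately.

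First, from $\height(x_i)=O(\log(i+2))$ and $\lambda\asymp_d 1+\height$, we get $\lambda_i=O(\ell_i)$. Hence $L_N=O(N\ell_N)$, and since $S$ is fixed, $\lambda_i+S+\ell_i=O(\ell_i)$. With the weights \eqref{eq:age-weight}, $\overline W_N=O(\sum_i w_{i,N}\ell_i)\le O(\ell_N\sum_i w_{i,N})=O(N\ell_N)$, using \eqref{eq:weight-bounds}. We also get $L_N+S+N=O(N\ell_N)$, so addresses have $a=O(\log(N+2))=O(\ell_N)$ bits.

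Next we split the cost into arithmetic and index/address work.

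\emph{Arithmetic.} By Lemma~\ref{lem:support}, each local computation involves operands of length $b_{\mathcal I}=O(1+\sum_{q\in Q(\mathcal I)}\ell_q)=O(\ell_N)$, because supports have bounded size. Its cost is therefore $\mathfrak A(b_{\mathcal I})=O(b_{\mathcal I}\log^3(\ell_N+2))$ by \eqref{eq:fastkernel}. Summing over $\mathcal I$ and applying \eqref{eq:weighted-reuse} with $\omega_i=\ell_i$ gives $O(\overline W_N\log^3(\ell_N+2))=O(N\ell_N\log^3(\ell_N+2))$. This is the key point: the logarithmic factors in the arithmetic depend on the size of each operand, not on $L_N$. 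Encoding keys (Lemma~\ref{lem:priority}) also costs $\mathfrak A(b(z))$ on supplied keys and is covered by the same sum.

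\emph{Index and addresses.} Each index update or exclusion query costs $O(a(1+b))$, where $b$ is the supplied key or threshold length plus the bounded local collection. Supplied lengths sum to $O(\overline W_N)$, so this work is $O(a\,\overline W_N)=O(N\ell_N^2)$. Data access is handled the same way: reading $b$ bits at an $a$-bit address costs $O(a+b)$, and there are $O(N)$ bounded local steps, giving $O(N\ell_N+\overline W_N)$. Grid rebuilds (with \eqref{eq:grid-sum}), point output ($O(L_N)$), and the final export (live storage $O(L_N+NS)$ times address overhead) all fit within $O(N\ell_N^2)$. Rejected candidates and unselected AAS reference points are already counted in Theorem~\ref{thm:reuse}.

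Adding the two parts gives the stated bound. Since $\log^3(\ell_N+2)=O(\log^3\log N)=O(\ell_N)$, the total is $O(N\log^2(N+2))$.

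The main obstacle is the bookkeeping in the proof of Theorem~\ref{thm:output-sensitive}. We need to check that every cost charged there was really of the form (sum of operand or key lengths) $\times$ (address length), or $\mathfrak A$ of a local length, and not the coarse $\log^4 Z_N$ factor applied to all terms. The delicate items are storage expansion and export. For these, we would rely on capacity doubling giving an amortized constant factor on written bits, each write costing $O(a)$ overhead per bounded record.
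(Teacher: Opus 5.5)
Your proposal is correct and follows essentially the paper's argument: every local operand and ordering key has $O(\ell_N)$ bits, so the $O(N)$ local computations cost $O(N\ell_N\log^3(\ell_N+2))$ by \eqref{eq:fastkernel}, and encoded-index accesses with $O(\ell_N)$-bit addresses cost $O(N\ell_N^2)$, rather than inheriting the coarse $\log^4$ factor of Theorem~\ref{thm:output-sensitive}. Your detour through \eqref{eq:weighted-reuse} and $\overline W_N$ is harmless but not needed, since the uniform $O(\ell_N)$ operand bound times the $O(N)$ count of bounded local steps already gives the same totals.
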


\begin{proof}
Every local operand and ordering key has $O(\ell_N)$ bits. There are $O(N)$ local computations, and the changed and query keys have total length $O(N\ell_N)$. Equation~\eqref{eq:fastkernel} bounds numerical work by $O(N\ell_N\log^3(\ell_N+2))$. Index accesses with $O(\ell_N)$-bit addresses cost $O(N\ell_N^2)$. Since $\log^3(\ell_N+2)=O(\ell_N)$ asymptotically, the stated order follows.
\end{proof}

For the explicitly described refinements in Section~\ref{sec:direct}, direct generation will improve this to $\Theta(N\log N)$ bit operations.

\subsection{Bounding denominator growth}\label{subsec:innovation}

The preceding results bound execution cost once $L_N$ is known. We now give a way to bound $L_N$ from formulas generating the points. Two effects matter: the depth of repeated operations and denominator factors not accounted for by the previous points.

Let $D_i$ be the least common multiple of the reduced coordinate denominators of $x_i$. Fix integers $Q\ge1$ and $q_0\ge2$, with $Q$ clearing the denominators of the fixed rational data used in the generation formulas. For each point, choose parent indices $\Pi_i\subset\{1,\ldots,i-1\}$ from its current supporting sites or an equivalent verified generation formula, and set
\begin{equation}\label{eq:innovation-data}
 U_i=\operatorname{lcm}\bigl(Q,(D_p)_{p\in\Pi_i}\bigr),\qquad
 c_i=\frac{D_i}{\gcd(D_i,q_0U_i)},\qquad
 \tau_i=1+\max_{p\in\Pi_i}\tau_p.
\end{equation}
The empty maximum is zero. The integer $\tau_i$ is the construction depth. The factor $c_i$, called the \emph{denominator innovation}, records additional prime factors or prime powers beyond those in $q_0U_i$. It depends on the chosen parents and $q_0$, not just on $x_i$. Let $\operatorname{Anc}(i)$ contain $i$ and all its distinct ancestors, and define
\[
 F_N(j)=|\{i\le N:j\in\operatorname{Anc}(i)\}|.
\]
Thus $F_N(j)$ counts the points that may inherit $c_j$, including $x_j$.

\begin{theorem}[Denominator growth through the parent relations]
\label{thm:innovation}
For the preceding data,
\begin{equation}\label{eq:innovation-divisibility}
 D_i\mid Qq_0^{\tau_i}\prod_{j\in\operatorname{Anc}(i)}c_j.
\end{equation}
If $\Omega\subset[-R,R]^d$ with $R\ge1$, then
\begin{equation}\label{eq:innovation-bound}
 L_N\le C_d\left[
 N\bigl(1+\log(R+1)+\log Q\bigr)
 +\log q_0\sum_{i\le N}\tau_i
 +\sum_{j\le N}F_N(j)\log c_j\right].
\end{equation}
\end{theorem}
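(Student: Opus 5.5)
We prove \eqref{eq:innovation-divisibility} by strong induction on $i$, and then bound $L_N$ by estimating each point's reduced numerators and denominators through its lcm-denominator $D_i$ and the box $[-R,R]^d$.

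\emph{Divisibility.} Put $M_i=Qq_0^{\tau_i}\prod_{j\in\operatorname{Anc}(i)}c_j$. By definition, $D_i=c_i\gcd(D_i,q_0U_i)$, so $D_i\mid c_iq_0U_i$. Since $U_i=\operatorname{lcm}(Q,(D_p)_{p\in\Pi_i})$, it suffices to show that $Q$ and every $D_p$ with $p\in\Pi_i$ divide $M_i/(c_iq_0)=Qq_0^{\tau_i-1}\prod_{j\in\operatorname{Anc}(i)\setminus\{i\}}c_j$.
\begin{itemize}
\item The factor $Q$ divides it trivially.
\item For a parent $p$, the induction hypothesis gives $D_p\mid Qq_0^{\tau_p}\prod_{j\in\operatorname{Anc}(p)}c_j$.
\item We have $\tau_p\le\tau_i-1$, and $\operatorname{Anc}(p)\subseteq\operatorname{Anc}(i)\setminus\{i\}$ because ancestors have smaller indices.
\end{itemize}
The product over $\operatorname{Anc}(i)$ runs over distinct indices, so no factor $c_j$ is counted twice. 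For this reason the lcm, not a product over parents, is essential, and the argument respects it. The base case $\Pi_i=\emptyset$ gives $\tau_i=1$ and $U_i=Q$, so $D_i\mid c_iq_0Q$ directly.

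\emph{Length bound.} Write each coordinate as $x_{ij}=a_{ij}/b_{ij}$ in lowest terms. Then $b_{ij}\mid D_i$, and $|a_{ij}|\le R\,b_{ij}\le RD_i$ because $|x_{ij}|\le R$. Hence $\bl(a_{ij})+\bl(b_{ij})\le 2\log_2 D_i+\log_2 R+2$, and
\[
\lambda_i\le C_d\bigl(1+\log(R+1)+\log D_i\bigr).
\]
The divisibility statement gives
\[
\log D_i\le \log Q+\tau_i\log q_0+\sum_{j\in\operatorname{Anc}(i)}\log c_j.
\]
Summing over $i\le N$, we exchange the order in the double sum:
\[
\sum_{i\le N}\sum_{j\in\operatorname{Anc}(i)}\log c_j=\sum_{j\le N}F_N(j)\log c_j,
\]
since every ancestor of $i\le N$ has index at most $N$ and $F_N(j)$ counts exactly the $i\le N$ with $j\in\operatorname{Anc}(i)$. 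This yields \eqref{eq:innovation-bound}.

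The only delicate point is the induction step. It must use $\operatorname{lcm}$ over parents together with the set (not multiset) of ancestors, and must check that distinct parents sharing an ancestor do not force repeated factors $c_j$. This holds because the exponent of each prime in an lcm is a maximum, not a sum. The $q_0$-power is handled the same way via the maximum defining $\tau_i$. Everything else is bookkeeping.
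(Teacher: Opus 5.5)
Your proposal is correct and follows the paper's proof. You start from $D_i\mid c_iq_0U_i$ and induct on the parent relations, with the lcm over parents taken against the set of ancestors, so shared ancestors contribute once and the $q_0$-exponent is one plus the largest parent depth. You then bound $\lambda(x_i)$ using $b_{ij}\mid D_i$ and $|a_{ij}|\le RD_i$, and interchange the ancestor sums to get the $F_N(j)$ term.

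Your write-up also makes explicit a step the paper only outlines: each argument of the lcm divides $Qq_0^{\tau_i-1}\prod_{j\in\operatorname{Anc}(i)\setminus\{i\}}c_j$.
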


\begin{proof}
Since $D_i=c_i\gcd(D_i,q_0U_i)$, we have $D_i\mid c_iq_0U_i$. Induct on the parent relations. A common multiple of all parent denominators uses $Q$ once, the largest parent depth as the exponent of $q_0$, and the product of the innovations over the union of the parent ancestor sets. Multiplication by $q_0c_i$ proves \eqref{eq:innovation-divisibility}; a shared ancestor need not be counted more than once.

Every reduced coordinate denominator of $x_i$ divides $D_i$, and every reduced numerator has absolute value at most $RD_i$. Consequently,
\[
 \lambda(x_i)\le C_d(1+\log(R+1)+\log D_i).
\]
Take logarithms in \eqref{eq:innovation-divisibility}, sum over $i$, and interchange the ancestor sums to obtain \eqref{eq:innovation-bound}.
\end{proof}

\begin{corollary}[A sufficient condition for short coordinates]
\label{cor:innovation}
On fixed input and for fixed $q_0$, suppose that both $\sum_{i\le N}\tau_i$ and $\sum_{j\le N}F_N(j)\log c_j$ are $O(N\log^c(N+2))$ for some fixed $c\ge1$. Then $L_N=O(N\log^c(N+2))$, and the specified LEFPS or AAS sequence admits an implementation with $N\log^{O(1)}(N+2)$ bit operations. For $c=1$, one has $L_N=\Theta(N\log(N+2))$.
\end{corollary}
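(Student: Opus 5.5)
The plan is to substitute the hypotheses directly into the denominator bound \eqref{eq:innovation-bound} of Theorem~\ref{thm:innovation}. After that, the bit-complexity and lower-bound statements follow from results already proved.

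\emph{Upper bound on $L_N$.} On fixed input, the quantities $R$, $Q$, $q_0$ and $d$ are constants. Choose $R\ge1$ so that $\Omega\subset[-R,R]^d$, which is possible because $\Omega$ is compact. The first term of \eqref{eq:innovation-bound} is then $O(N)$. The second term is $\log q_0\sum_{i\le N}\tau_i=O(N\log^c(N+2))$ by hypothesis. The third term is $\sum_j F_N(j)\log c_j=O(N\log^c(N+2))$, again by hypothesis. Since $c\ge1$, the $O(N)$ term is absorbed, and we obtain $L_N=O(N\log^c(N+2))$.

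\emph{Bit complexity.} This bound has the form $L_N=N\log^{O(1)}(N+2)$, so Corollary~\ref{cor:canonical-criterion} applies: the specified sequence admits an implementation with $N\log^{O(1)}(N+2)$ bit operations. To make this explicit, \eqref{eq:weight-bounds} gives
\[
\overline W_N\le(1+\log_2(N+1))L_N+O(N(S+\ell_N)).
\]
Here $\log(L_N+S+N+2)=O(\log(N+2))$, and Theorem~\ref{thm:output-sensitive} then gives the polylogarithmic overhead. The preprocessing cost $T^{\rm bit}_{\rm pre}$ is a constant on fixed input.

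\emph{The case $c=1$.} The upper bound $L_N=O(N\log(N+2))$ is the estimate above with $c=1$. The matching lower bound $N\log(N+2)=O_d(L_N)$ is the first part of \eqref{eq:entropy} in Lemma~\ref{lem:encoding}. That lemma applies because the generated points are distinct: each insertion lies in $\Omega\setminus P$. Together these give $L_N=\Theta(N\log(N+2))$.

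The only point needing care is that the constants in Theorem~\ref{thm:innovation} must not depend on $N$. This holds because $Q$ and $q_0$ are fixed by the input and the generation formulas. Everything else is bookkeeping.
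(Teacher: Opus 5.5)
Your proof is correct and follows the paper's route: you substitute the hypotheses into the bound of Theorem~\ref{thm:innovation}, obtain the bit cost from Theorem~\ref{thm:output-sensitive} (reached through Corollary~\ref{cor:canonical-criterion} and \eqref{eq:weight-bounds}), and take the matching lower bound for $c=1$ from Lemma~\ref{lem:encoding}. Your explicit checks, that the $O(N)$ term is absorbed because $c\ge1$ and that $\log(L_N+S+N+2)=O(\log(N+2))$, are exactly the bookkeeping the paper's one-line proof leaves implicit.
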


\begin{proof}
Apply Theorem~\ref{thm:innovation}, Theorem~\ref{thm:output-sensitive}, and Lemma~\ref{lem:encoding}.
\end{proof}

If each point is given by an affine formula in its parent coordinates, with coefficient denominators dividing $q_0$ and constant-term denominators dividing $q_0Q$, then $D_i\mid q_0U_i$ and $c_i=1$. Logarithmic construction depth then gives logarithmic coordinate height; midpoint refinement uses $q_0=2$. A finite set of nonlinear formulas need not suffice: repeated squaring of $1/2$ gives denominator length $2^k+1$ after $k$ steps. The ancestor counts here describe propagation through generation formulas; the reuse counts in Theorem~\ref{thm:reuse} describe reads during execution. These are different quantities, and cancellations may make the denominator bound conservative.

\subsection{Coordinate growth on rational interval domains}
\label{subsec:interval-arithmetic}

On intervals, every admissible choice from the vertex--midpoint pool has logarithmic height, even without minimizing coordinate length.

\begin{theorem}[Coordinate growth on rational interval covers]
\label{thm:interval-arithmetic}
Let $\K$ be a fixed finite family of nondegenerate compact rational intervals, with arbitrary overlaps and possibly disconnected union. Starting from any fixed rational singleton, every sequence obtained by choosing admissible members of \eqref{aai:eq:pool} satisfies
\begin{equation}\label{eq:interval-output}
 \height(x_n)=O_I(\log(n+2)),\qquad
 L_N=\Theta_I(N\log(N+2)).
\end{equation}
Only $O_I(\log(N+2))$ of the first $N$ insertions are nonfarthest. The constants are uniform over these admissible choices on the fixed input. In particular, both LEFPS and AAS have these properties and admit maintained exact implementations with $O_I(N\log^2(N+2))$ bit complexity.
\end{theorem}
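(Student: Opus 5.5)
The plan is to exploit that in dimension one every candidate in \eqref{aai:eq:pool} is a fixed dyadic affine combination of nearby sites and cell endpoints. This gives power-of-two denominators, and the remaining task is to bound the exponent by a "gap generation" argument.

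\textbf{Candidate structure.} Let $E$ be the finite set of rational endpoints of the cells in $\K$, together with $x_1$, and let $Q$ clear all their denominators. For a current design $P$, order $P\cup E$ on the line. Every fragment $F_{\sigma,p}$ is an interval whose endpoints are either points of $E$ or bisectors $(p+q)/2$ of two sites $p,q$ that are consecutive among the sites. Note that $p,q$ may lie in different components; the bisector then lies in whatever cell contains it. Consequently every vertex candidate is a point of $E$ or a midpoint $(p+q)/2$. Every edge-midpoint candidate has the form $(a+b)/2$ with $a,b$ such vertices, hence a combination of at most four points of $P\cup E$ with weights in $\frac14\Z$. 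By induction, every site lies in $Q^{-1}2^{-m}\Z$ for some $m$, and its numerator is at most $RQ2^{m}$ in absolute value. So $\height(x_n)=O_I(1+m_n)$, where $m_n$ is its dyadic exponent. This is the affine situation after Corollary~\ref{cor:innovation}, with $q_0=4$ and all innovations $c_i=1$. It remains to bound the construction depth $\tau_n$, or directly $m_n$, by $O_I(\log(n+2))$.

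\textbf{Gap generations.} Call a \emph{gap} an interval between two consecutive points of $P\cup E$; gaps containing no point of $\Omega$ in their interior are allowed. Every admissible insertion lies in a single current gap $[a,b]$ and is determined by its endpoints together with at most one neighbouring gap endpoint. Inspecting the candidate forms above, the new point lies at a position $a+\theta(b-a)$ with $\theta\in\{\tfrac14,\tfrac12,\tfrac34\}$, or at a fixed endpoint, or at an analogous dyadic position when the relevant fragment is cut by an input endpoint.

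I would assign generation $0$ to the initial gaps. Splitting a gap of generation $k$ then produces gaps of generation $k+1$, each of length at most $\tfrac34$ of the parent. Their endpoints have dyadic exponent at most $2(k+1)+O_I(1)$. On the other hand, every gap in $P_n$ has length at least $\min\{\delta(P_n),\,\dist(x,E)\}$ for the relevant pair. Gaps whose endpoints come from $E$ are bounded below by the fixed input, so all gaps have length $\gtrsim_I r_n\asymp n^{-1}$ by Theorem~\ref{thm:admissible}. Hence the generation satisfies $k\le\log_{4/3}(C_I n)$, giving $m_n=O_I(\log(n+2))$. Summing and using Lemma~\ref{lem:encoding} for the lower bound yields $L_N=\Theta_I(N\log(N+2))$. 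These constants depend only on the input, not on which admissible candidates are chosen.

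\textbf{Main obstacle: counting nonfarthest insertions.} The delicate step is the bookkeeping for gaps cut by cell endpoints or spanning holes of $\Omega$, together with the claim that only $O_I(\log(N+2))$ insertions are nonfarthest. For the latter I would use \eqref{eq:hole-set}. If $r<H$, then every point with $d_P>r$ lies within $r$ of $x$. In one dimension such points occur only near the centres of gaps of half-length $>r$ and near exposed cell endpoints. Hence at most two such "long" gaps or exposed endpoints exist, and they must be adjacent to $x$. Because of the interlacing \eqref{eq:interlacing}, $r_n$ passes through each dyadic scale $[2^{-j-1},2^{-j})$ only once. Within one scale, the population of long gaps must have dropped to at most two. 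After the nonfarthest insertion, the new covering radius is at most $r$, so only $O_I(1)$ such events can happen per scale. With $O(\log N)$ scales, this gives the count. The $O_I(N\log^2(N+2))$ bit bound for LEFPS and AAS then follows immediately from Corollary~\ref{thm:log-heights}.
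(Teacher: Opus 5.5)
Your reduction matches the paper's: every candidate is an affine combination, with coefficients in $\tfrac14\Z$, of nearby sites and input endpoints. So all innovations are trivial, and the theorem reduces to a logarithmic bound on construction depth. Your gap-generation argument for that bound does not work as stated, because inspecting the candidate forms does \emph{not} place insertions at $\theta\in\{\tfrac14,\tfrac12,\tfrac34\}$ of a gap. In one dimension a fragment is a single edge. For a fragment with no endpoint in $E$, namely $[(p_-+p)/2,(p+p_+)/2]$, the edge-midpoint candidate is $p+(v-u)/4$ with $u=p-p_-$ and $v=p_+-p$. This point lies at relative position $(v-u)/(4v)$ in $[p,p_+]$ and depends on $p_-$, whose dyadic exponent is not controlled by the generation of $[p,p_+]$. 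Hence neither the $3/4$ contraction nor the exponent bound $2(k+1)+O_I(1)$ follows. The missing key step is the paper's observation that these candidates are never admissible. For $x>p$ one has $r=(v-u)/4<v/4$, while the remaining part of $[p,p_+]$ forces $h(P\cup\{x\})\ge(v-r)/2>r$. This uses the preliminary fact, which you also omit, that after $O_I(1)$ insertions every component contains sites and the components decouple. Consequently an admissible insertion is one of two kinds. It is either the midpoint of two consecutive sites in one component, which depends only on them, or an \emph{exceptional} point: an input endpoint or the midpoint of a fragment ending at one. An exceptional point lies within $H=h(P)$ of $E$.

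The exceptional insertions are exactly where your bookkeeping fails, and ``an analogous dyadic position'' does not control them. Take $\K=\{[0,1],[2/5,1]\}$ with $x_1=0$ and $x_2=1$. The midpoint $9/20$ of the fragment $[2/5,1/2]$ owned by $0$ in the cell $[2/5,1]$ is an admissible candidate, since $h(\{0,9/20,1\})=11/40\le 9/20=d_{\{0,1\}}(9/20)$. Yet it splits your gap $[2/5,1]$ at relative position $1/12$, leaving a child with $11/12$ of the parent's length. Likewise, your claim that all gaps have length $\gtrsim_I r_n$ is unproved for gaps with exactly one endpoint in $E$. Bounding the distance from a site to $E$ below would already require the exponent control you are trying to prove. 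The paper replaces this with a count. Accepted points at a scale $U/2<H\le U$ are more than $U/4$ apart, so only $O(|E|)$ exceptional insertions occur per dyadic scale, and $O_I(\log(N+2))$ overall. An ordinary midpoint $x$ with an ordinary parent $p$ satisfies $r_x\le r_p/2$, so ordinary parent chains have bounded length within a scale. The exceptional insertions cut each parent path into boundedly many such chains per scale, which gives $\tau_i=O_I(\log(i+2))$. Your nonfarthest count lacks the same ingredient. Knowing that at most two long gaps remain, adjacent to $x$, does not stop such configurations from recurring many times within one dyadic scale. What is needed is this: a nonfarthest ordinary midpoint leaves an old farthest point outside its gap at distance greater than its insertion radius, so it is inadmissible. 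Hence every nonfarthest admissible insertion is exceptional and falls under the packing count near $E$.
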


\begin{proof}
Let $E$ be the set of all endpoints in the original cover, and let $D$ be the length of an enclosing interval. Since $h_n\le\delta(P_n)\le D/(n-1)$, after a number of insertions depending only on the input, each component has at least two sites and its nearest-site problem is independent of the other components. Because each step has finitely many candidates, there are only finitely many histories up to this fixed index. Their coordinate heights can therefore be absorbed into an input-dependent constant.

\emph{Exceptional insertions are close to cover endpoints.}
Call a later insertion \emph{ordinary} if it is the midpoint of two consecutive sites in one component. A fragment vertex is either such a midpoint or a point of $E$. A fragment with no endpoint in $E$ is a complete interior Voronoi interval. If its owner $p$ has neighboring gaps of lengths $u,v$, the midpoint of this fragment is $x=p+(v-u)/4$. For $x>p$, its insertion radius is $r=(v-u)/4<v/4$, whereas the remaining gap on the right gives $h(P\cup\{x\})\ge(v-r)/2>r$. It is therefore inadmissible. The case $x<p$ is symmetric, and $x=p$ is not a candidate. Call a nonordinary insertion \emph{exceptional}. Such an admissible insertion is either an input endpoint or the midpoint of a fragment ending at one. As a fragment has length at most $2H$, where $H=h(P)$, every such insertion lies in
\begin{equation}\label{eq:interval-exception}
 \bigcup_{e\in E}[e-H,e+H].
\end{equation}

Within $U/2<H\le U$, accepted points are more than $U/4$ apart. There are consequently at most $9|E|$ exceptional insertions at this scale. The covering bound $h_n\ge\vol(\Omega)/(2n)$ permits only $O_I(\log(N+2))$ scales through $P_N$. An ordinary midpoint has insertion radius equal to half its gap length. It cannot change nearest-site distances outside that gap. If it were not farthest, an old farthest point outside the gap would therefore remain at a distance greater than the insertion radius, violating admissibility. This proves the nonfarthest count.

\emph{The affine construction has logarithmic depth.}
Let $r_p$ be the insertion radius when the site $p$ was created. An ordinary point $p$ begins with adjacent gaps of length $r_p$, and those gaps only shrink. A later ordinary midpoint $x$ using $p$ as a parent therefore satisfies
\begin{equation}\label{eq:ordinary-contraction}
 r_x\le r_p/2.
\end{equation}
Use the two adjacent sites as parents of an ordinary point. Treat input endpoints as fixed data. For an exceptional fragment midpoint, express each endpoint as either fixed data or the midpoint of two old sites, and take the sites in this formula as parents.

Insertion radii at one scale lie in $(U/4,U]$. By \eqref{eq:ordinary-contraction}, a chain of ordinary parent--child relations has bounded length within that scale. There are only boundedly many exceptional insertions at the same scale, so they split any parent path into boundedly many such chains. The finite initial histories add only a constant to the depth. Thus each parent path has $O_I(1)$ nodes per scale and $\tau_i=O_I(\log(i+2))$.

All coefficients in these affine formulas have denominators dividing four. Let $Q$ be a common multiple of the reduced denominators of all endpoints in $E$ and of the initial point. Counting construction depth from these fixed rational data, induction gives
\[
 \operatorname{den}(x_i)\mid Q 4^{\tau_i},
\]
where $\operatorname{den}$ denotes the positive reduced denominator. The bounded domain gives the same logarithmic bound for numerator lengths. Summation, Lemma~\ref{lem:encoding}, and Corollary~\ref{thm:log-heights} complete the proof.
\end{proof}

The candidate-set restriction is important. The mesh-ratio bound alone cannot control coordinate precision, even on the unit interval.

\begin{proposition}[The mesh ratio does not bound coordinate height]
\label{prop:long-dyadic}
For every computable $F:\mathbb N\to\mathbb N$, there is a deterministic admissible sequence of dyadic points on $[0,1]$, starting with $x_1=0$ and $x_2=1$, such that $\height(x_n)\ge F(n)$ for every $n\ge3$. This assertion concerns unrestricted admissible selection, not LEFPS or AAS.
\end{proposition}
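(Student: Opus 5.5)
The plan is to build the sequence by hand on $[0,1]$. Each new point is placed slightly off the midpoint of the current largest gap, and the offset is a dyadic number so fine that it forces a long denominator. Admissibility is checked through the criterion $H^+\le r$ of Theorem~\ref{thm:admissible}.

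\emph{Setup.} The point $x_2=1$ is a farthest point from $\{0\}$, so it is admissible by the remark following Theorem~\ref{thm:admissible}. Both endpoints of $[0,1]$ are then sites. For any site set containing $0$ and $1$ with consecutive gaps $g_1,\dots,g_m$, we have $h(P)=\max_k g_k/2$. I will maintain the following invariant for $n\ge2$: all current gaps are dyadic, pairwise distinct, and $P_n$ is certified. At $n=2$ there is a single gap, so the invariant holds trivially.

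\emph{Inductive step.} Let $g$ be the unique largest gap, $[a,a+g]$, and let $g'<g$ be the second largest gap, or $g'=0$ if there is none. Put
\[
x_{n+1}=a+\frac g2+\varepsilon_n,\qquad \varepsilon_n=2^{-M_n}.
\]
Then the insertion radius is $r=g/2-\varepsilon_n$. The new gaps are $g/2\pm\varepsilon_n$, so $H^+=\max\{g'/2,\,g/4+\varepsilon_n/2\}$. Admissibility $H^+\le r$ holds as soon as
\[
\varepsilon_n\le \frac{g-g'}{2}\quad\text{and}\quad \varepsilon_n\le \frac g6 .
\]
I choose $M_n$ deterministically and computably as
\[
M_n=\max\Bigl\{F(n+1),\ 1+\max_k \height(g_k),\ 1+\max_{i\le n}\height(x_i),\ \bigl\lceil\log_2\bigl(6/(g-g')\bigr)\bigr\rceil+1,\ \bigl\lceil\log_2(6/g)\bigr\rceil+1\Bigr\}.
\]
All quantities on the right are computable from the exact dyadic history.

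\emph{Distinct gaps and large height.} Every existing gap, and also $g/2$, has binary denominator below $2^{M_n}$. Hence each new gap $g/2\pm\varepsilon_n$ has reduced denominator exactly $2^{M_n}$. So the new gaps differ from all old gaps, and from each other, since $\varepsilon_n>0$. This preserves the invariant. For the same reason, $x_{n+1}=(a+g/2)+2^{-M_n}$ has reduced denominator exactly $2^{M_n}$, because $a+g/2$ has a shorter denominator. Therefore
\[
\height(x_{n+1})\ge M_n+1> F(n+1).
\]
Certification of $P_{n+1}$ follows from Theorem~\ref{thm:admissible}.

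\emph{Main obstacle.} The one step needing care is the uniqueness of the largest gap. If two gaps shared the maximum, any point inserted into one of them would leave $h=g/2$ while giving $r<g/2$, so admissibility would fail. The separation of heights by $M_n$ is exactly what rules this out. Finally, this sequence is neither the LEFPS nor the AAS sequence: its points are generally not fragment vertices or edge midpoints, which is consistent with the restriction in Theorem~\ref{thm:interval-arithmetic}.
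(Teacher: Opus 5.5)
Your proof is correct and follows essentially the paper's construction: you keep all gap lengths distinct, insert just right of the midpoint of the unique longest gap, and impose the same two bounds $\varepsilon\le (g-g')/2$ and $\varepsilon\le g/6$. The only difference is in choosing the offset. You fix $\varepsilon_n=2^{-M_n}$ with $M_n$ exceeding all current heights, which makes the exact denominator $2^{M_n}$ and the distinctness of the new gaps automatic; the paper instead excludes the finitely many coinciding choices and searches over exponents $B\ge\max\{1,F(n)\}$ for the least permitted odd numerator.
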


\begin{proof}
Maintain distinct lengths for all adjacent gaps. For the unique longest gap $[a,a+L]$, let $L_2<L$ be the second-longest length, or zero if there is no other gap. Choose
\[
 x=a+L/2+\varepsilon,\qquad
 0<\varepsilon<\min\{(L-L_2)/2,L/6\}.
\]
Then $r=d_P(x)=L/2-\varepsilon$ and
\[
 h(P\cup\{x\})=\max\{L_2/2,L/4+\varepsilon/2\}<r.
\]
Remove the finitely many choices for which a new gap has the same length as an old gap. The two new gaps are distinct because $\varepsilon>0$. The remaining nonempty open set contains dyadic points with exact denominator $2^B$ for arbitrarily large $B$: odd numerators over this denominator have spacing $2^{1-B}$. At step $n$, search increasing $B\ge\max\{1,F(n)\}$ and take the least permitted odd numerator at the first feasible exponent. Exact interval and equality tests make this a deterministic terminating procedure. Admissibility preserves the mesh-ratio bound, and the reduced denominator has $B+1$ bits.
\end{proof}

\section{Direct generation of structured point sequences}\label{sec:direct}

On certain inputs, the ordered LEFPS or AAS sequence has an explicit refinement description. Generating these points directly avoids Voronoi maintenance. We call this \emph{Direct Stream Execution} (DSE). It reproduces the same points in the same order, outputs exact $h_n^2$ and $\delta(P_n)^2$ for $n\ge2$, and may stop after any $P_N$. All preparation is charged when performed, including work on points never requested. The final export contains the generator's continuation state, rather than a Voronoi diagram.

\subsection{Cost of direct refinement}
\label{subsec:direct-criterion}\label{subsec:direct-mechanisms}

The constructions below use dyadic refinements of fixed rational data. The following criterion includes preparation at incomplete levels.

\begin{proposition}[Cost of dyadic direct generation]
\label{thm:ldcc}\label{thm:direct-refinement}
Suppose an autonomous direct construction reproduces a specified sequence after a fixed initial part. Level $k$ starts with $\Theta_I(2^{kd})$ points and has a bounded number of phases. Assume each phase requires $O_I(2^{kd})$ preparation operations and bounded work per output, including exact quality measures. Preparation begins only when another point is requested. Suppose numerical data have the form $z/(D2^e)$, with fixed $D$, $\bl(z),e=O_I(k+1)$, and are formed by additions, shifts, and multiplications by fixed integers. Ordering may use a fixed number of $O_I(k+1)$-bit integer keys. If the live continuation state has $O_I(N)$ records of this width, then, at every stopping point,
\begin{equation}\label{eq:direct-cost}
 T^{\rm str}(N)=O_I(N),\qquad
 T^{\rm bit}(N)=\Theta_I(N\log(N+2)),\qquad
 L_N=\Theta_I(N\log(N+2)).
\end{equation}
These bounds include the initial part and one final state export.
\end{proposition}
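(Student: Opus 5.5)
The plan is to account for the work level by level and then sum. Fix a stopping point $N$. Let $K$ be the largest level whose preparation has begun, so at most one level is incomplete. Since level $k$ starts with $\Theta_I(2^{kd})$ points and preparation begins only when another point is requested, level $K$ has been entered with $N=\Omega_I(2^{Kd})$, giving $2^{Kd}=O_I(N)$ and $K=O_I(\log(N+2))$. The fixed initial part, together with everything before the first dyadic level, contributes a constant depending only on $I$.

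\textbf{Structural work.} Level $k$ has a bounded number of phases, each with $O_I(2^{kd})$ preparation, so levels $0,\dots,K$ together cost $\sum_{k\le K}O_I(2^{kd})=O_I(2^{Kd})=O_I(N)$. This includes the partially prepared last level, even for points never output. Each output, together with its exact $h_n^2$ and $\delta(P_n)^2$, adds bounded work, giving a further $O(N)$. The final export reads $O_I(N)$ live records at unit cost. Hence $T^{\rm str}(N)=O_I(N)$.

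\textbf{Bit work and output length.} Every numerical datum at level $k$ has the form $z/(D2^e)$ with $\bl(z),e=O_I(k+1)$. Every key is an $O_I(k+1)$-bit integer. All operations are additions, shifts, and multiplications by fixed integers. Each unit operation at level $k$ therefore costs $O_I(k+1)$ bit operations, with no fast-multiplication factors. Reduction to lowest terms also stays within this width: the denominator is $D2^e$ with $D$ fixed, so it suffices to strip common factors of two from $z$ and test divisibility by the fixed divisors of $D$, all in linear time.

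Summing gives
\[
 T^{\rm bit}(N)=O_I\Bigl(\sum_{k\le K}2^{kd}(k+1)\Bigr)+O_I(N(K+1))=O_I(N\log(N+2)).
\]
The main obstacle is addressing and data access. I would charge each access to a record at level $k$ as $O_I(k+1)$ bits, since an index into a level-$k$ array has $O(kd)$ bits. Records can be laid out per level in arrays of size $\Theta(2^{kd})$, allocated when their level begins, so that addresses never exceed $O_I(\log N)$ bits and accesses match the operand widths. The final export writes $O_I(N)$ records of width $O_I(K+1)$, which also costs $O_I(N\log(N+2))$.

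Each output point has height $O_I(\log(n+2))$, which gives $L_N=O_I(N\log(N+2))$. Lemma~\ref{lem:encoding} gives the matching lower bound $N\log(N+2)=O_d(L_N)$, so $L_N=\Theta_I(N\log(N+2))$. Explicitly writing the reduced coordinates costs at least a constant multiple of $L_N$, which supplies the lower bound $T^{\rm bit}(N)=\Omega(N\log(N+2))$.
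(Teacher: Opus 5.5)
Your proof is correct and follows the paper's argument. The preparation of levels $0,\dots,K$ sums geometrically to $O_I(2^{Kd})$ and is charged to the $\Theta_I(2^{Kd})$ points already generated when level $K$ begins. Every operation, reduction to lowest terms (stripping powers of two and dividing by the fixed odd part of $D$), access and export works at width $O_I(k+1)=O_I(\log(N+2))$, and Lemma~\ref{lem:encoding} supplies the matching lower bounds.

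The only step the paper spells out that you absorb into the hypothesis is the ordering. The paper sorts the $O_I(k+1)$-bit keys by a stable radix sort with a power-of-two radix comparable to $2^{kd}$. Its bounded number of counting passes costs $O_I(2^{kd})$ operations and $O_I(2^{kd}(k+1))$ bits, including counters and addresses. A comparison sort would instead add a factor of order $k$ and destroy $T^{\rm str}(N)=O_I(N)$.
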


\begin{proof}
Preparation through the last level started sums geometrically to $O_I(2^{kd})$. That many points have already been generated at the start of the level, so this is $O_I(N)$ even if generation stops immediately after preparation. Output work is also $O_I(N)$; this is aggregate amortization in the sense of~\cite{Tarjan}.

Reduction and ordering have linear bit cost at the stated width. Indeed, write $D=2^uD_o$ with $D_o$ odd. For $z\ne0$, put $g=\gcd(|z|,D_o)$ and $t=\min\{v_2(|z|),e+u\}$; the reduced pair is
\[
 \left(z/(g2^t),\ (D_o/g)2^{e+u-t}\right).
\]
Computing the remainder modulo the fixed integer $D_o$, counting trailing zeros, dividing by a fixed integer, and performing bit shifts require $O_I(k+1)$ bit operations in total. The coordinate zero is represented as $0/1$. This also computes the exact reduced lengths used by $\kappa$. For $M=O_I(2^{kd})$ records, stable radix sorting with a power-of-two radix comparable to $2^{kd}$ takes a bounded number of counting passes. It costs $O_I(2^{kd})$ operations and $O_I(2^{kd}(k+1))$ bit operations, including counters, addresses, and transfers. With a common positive denominator, coordinate order is numerator order.

Since $k=O_I(\log(N+2))$, arithmetic, data access, and final export cost $O_I(N\log(N+2))$ in total. The coordinate forms give the same upper bound for $L_N$; Lemma~\ref{lem:encoding} gives optimality.
\end{proof}

In the planar and four-dimensional refinements, the covering radius
is constant during a partial phase and changes at its last point. A prepared phase count identifies that point without looking ahead to the next phase. The insertion radius $r$ updates the minimum distance by $\delta(P\cup\{x\})^2=\min\{\delta(P)^2,r^2\}$. Thus exact quality requires only fixed formulas and a counter. The preparation bound is amortized: the first request in a phase may cost more than subsequent requests.

\subsection{LEFPS on rational interval unions}
\label{subsec:certificate-instances}

\begin{corollary}[Direct generation on interval unions]
\label{thm:interval-writer}
On a fixed rational interval union, LEFPS from a rational singleton or a fixed certified rational initial design eventually selects every component endpoint. Its ordered continuation admits the bounds \eqref{eq:direct-cost}, including the initial part and exact quality.
\end{corollary}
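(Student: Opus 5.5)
The plan is to describe the LEFPS continuation on an interval union explicitly as a gap-halving process. We then check that this description meets the hypotheses of Proposition~\ref{thm:ldcc} with $d=1$. Throughout, the components are $[\alpha_c,\beta_c]$, $c=1,\dots,m$, with rational endpoints.

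\emph{Step 1: every endpoint is eventually selected.} On one component with sites $s_1<\dots<s_k$, $d_P$ has local maxima only at midpoints of consecutive sites, at $\alpha_c$ with value $s_1-\alpha_c$, and at $\beta_c$ with value $\beta_c-s_k$. Components containing no site have $d_P$ maximized at an endpoint. The covering radius tends to zero, since $h_n\le\delta(P_n)\le D/(n-1)$ by \eqref{eq:greedy-separation} and \eqref{eq:volume-scales}. Hence every component eventually receives a site. Moreover, an endpoint $\alpha_c$ stays unselected only while $s_1-\alpha_c\le h_n$, and $s_1-\alpha_c$ is a fixed positive number once $s_1$ is fixed. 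So after finitely many steps, depending only on the input, every endpoint is a site. Call this index $n_0$; the initial part $P_{n_0}$ has bounded size and height.

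\emph{Step 2: the order after $n_0$.} From then on, the design consists of finitely many gaps $[a,b]$ between consecutive sites. $d_P$ is maximized at midpoints of longest gaps with value half the length, and inserting a midpoint only halves that gap. Each of the $M=O_I(1)$ initial gaps $g_j$, with rational length $\ell_j$, is therefore refined dyadically. The subgaps of $g_j$ at depth $t$ have length $\ell_j2^{-t}$, and the selection order is governed by the multiset of lengths. Candidate radii with equal value are inserted in lexicographic order, that is, left to right by coordinate.

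We organize levels by a dyadic scale. Level $k$ contains all pairs $(j,t)$ with $\ell_j2^{-t}\in(2^{-k-1}\Lambda,2^{-k}\Lambda]$ for a fixed $\Lambda$. This consists of boundedly many phases, one per distinct value $\ell_j2^{-t}$ in the window; there are at most $M$ of these. In each phase, all gaps of that length are split. Their midpoints, taken across all $j$ sharing that length, are output in increasing coordinate order.

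We must check that splitting inside a phase does not create a gap of the same length that would need to interleave. It does not, because the children have half length and so belong to a later phase. Thus the greedy order is exactly phase-by-phase, with ties sorted by coordinate.

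\emph{Step 3: cost accounting.} Preparation of a phase merges the boundedly many sorted arithmetic progressions of midpoints $a_j+(2i+1)\ell_j2^{-t-1}$. This costs $O(2^k)$ operations, via a radix sort or a merge of $O(1)$ progressions. All numbers have the form $z/(D2^e)$ with $D$ the common denominator of the endpoints and $e=O(k)$. They are formed by additions and shifts.

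Exact quality is then easy. Within a phase, $h_n^2$ is constant until the phase's last point, after which it equals the square of the next phase's half length. The value $\delta^2$ updates via $\min\{\delta^2,r^2\}$. Level $k$ starts with $\Theta(2^k)$ points. Proposition~\ref{thm:ldcc} then yields \eqref{eq:direct-cost}, and the initial part adds only $O_I(1)$.

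The main obstacle is Step 2's tie handling across different components and gaps whose lengths $\ell_j$ differ by powers of two. Their subgaps coincide in length and must be merged into a single phase. This is why phases are indexed by distinct length values rather than by $(j,t)$. The certified-initial-design case follows the same way, using Step 1 starting from that design.
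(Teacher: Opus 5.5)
Steps 2 and 3 are sound. Step 1, however, has a gap, and it carries the first assertion of the corollary. Your description of $d_P$ on a component uses only the sites in that component. But distances are ambient, so sites in other components compete until the components decouple. Before that, your description is false: on $[0,1]\cup[2,3]\cup[4,5]$ from $x_1=0$, LEFPS selects $x_2=5$ and then $x_3=5/2$, an interior point of a component that contained no site. More importantly, you conclude from ``$s_1-\alpha_c$ is a fixed positive number once $s_1$ is fixed'' without showing that $s_1$ ever stops moving. If insertions could keep landing in $(\alpha_c,s_1)$, then $s_1\to\alpha_c$ would be compatible with $h_n\to0$, and no contradiction would arise.

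The missing idea is the paper's decoupling argument:
\begin{itemize}
\item Since $h_n\to0$, eventually $h_n<g$, where $g>0$ is the minimum distance between distinct components.
\item From then on, every component contains a site, and $d_P(y)=d_{P\cap c}(y)$ for $y$ in component $c$, because a site elsewhere is at distance at least $g>h_n\ge d_P(y)$.
\item Then $d_P(y)=s_1-y$ on $[\alpha_c,s_1]$ is uniquely maximized at $\alpha_c$. So no farthest-point insertion other than $\alpha_c$ itself can enter $(\alpha_c,s_1)$.
\item Hence $s_1$ is frozen while $\alpha_c$ is missing, $d_P(\alpha_c)=s_1-\alpha_c>0$ stays fixed, and $h_n\to0$ forces $\alpha_c$ to be selected. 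The same holds for $\beta_c$.
\end{itemize}
After this repair, your local-maxima description is valid for the rest of the proof. Also, your common denominator $D$ must clear all sites of $P_{n_0}$, including the seed and early insertions such as $5/2$, not only the component endpoints. This is still fixed data, so nothing else changes.

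With Step 1 repaired, your continuation argument is correct but organized differently from the paper's. The paper keeps one stream per root gap. Its head is the leftmost unused midpoint at the current depth $t$, with radius $\ell_j2^{-t-1}$. At each request, the paper compares the boundedly many heads by radius and breaks ties by the smaller coordinate. Because root gaps are disjoint intervals, this head comparison already merges equal-length levels of different roots correctly. So the tie issue you single out as the main obstacle needs no separate phase construction. Each request then costs bounded structural work and $O_I(\log(N+2))$ bit work in the worst case, with no preparation at all.

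Your approach groups gaps into phases by distinct length value and prepares each phase list in advance; that list is really just a concatenation in root order. You then charge the preparation through Proposition~\ref{thm:ldcc}. This also yields \eqref{eq:direct-cost}, but only in the amortized sense, with the extra cost concentrated at the first request of each phase.
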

\begin{proof}
Since $h_n\to0$, the nearest-site problems of distinct connected components eventually become independent. From that time, a missing component endpoint is the unique maximizer on the interval between it and the nearest site in its component. No other farthest-point insertion can enter and shorten that interval. If the endpoint were never selected, its distance from the design would therefore remain positive, contradicting $h_n\to0$.

After all component endpoints have been selected, fix the finitely many gaps $[a_\alpha,b_\alpha]$ between consecutive sites within each connected component, of lengths $l_\alpha$. These root gaps cover their respective components and have no sites in their interiors. Within root gap $\alpha$, successive midpoint levels are
\[
 a_\alpha+\frac{(2t+1)l_\alpha}{2^{j+1}},\quad
 0\le t<2^j,\qquad R_{\alpha,j}=\frac{l_\alpha}{2^{j+1}}.
\]
Enumerate each level in increasing $t$. Keep the next point and insertion radius of each root, select the largest radius, break radius ties by the smaller coordinate, and advance that root alone. This is LEFPS: unbisected gaps are longer than their children, and the selected point is the leftmost remaining midpoint at the largest radius. The largest updated root radius gives $h_n$, and the selected radius updates $\delta(P_n)$.

Reaching level $j$ requires $2^j-1$ earlier points in that root.
Thus all data have $O_I(\log(N+2))$ bits. There are only fixedly many roots, and their comparisons and output use shifts and fixed rational coefficients. Each request therefore costs bounded unit-cost work and $O_I(\log(N+2))$ bit work. Lemma~\ref{lem:encoding} gives the matching lower bound.
\end{proof}

\subsection{AAS on a rectangle and a nonconvex cross}
\label{subsec:rectangle-capture}

We consider the single-cell rectangle
\begin{equation}\label{eq:rectangle-input}
 \Omega_R=[0,2]\times[0,1],\qquad x_1=(0,0),
\end{equation}
and the cross with its five-square input cover
\begin{equation}\label{eq:cross}
 \begin{gathered}
 \Omega_+=\bigcup_{(i,j)\in C}[i,i+1]\times[j,j+1],\qquad x_1=(1,1),\\
 C=\{(0,1),(1,0),(1,1),(1,2),(2,1)\}.
 \end{gathered}
\end{equation}
For either domain, let $G_k$ be the grid vertices at spacing $s=2^{-k}$, and let $A_k,E_k$ be the centers and side midpoints of the fine squares, with shared points counted once. Then $G_{k+1}=G_k\mathbin{\dot\cup}A_k\mathbin{\dot\cup}E_k$. These squares describe the refinement; they do not replace the original input cells in the AAS candidate set.

The geometric pattern is the square refinement of Pronzato and Zhigljavsky~\cite[Theorem~3.3]{PZ}. The additional issue is agreement with AAS's coordinate-length ordering.

\begin{lemma}[Square refinement and AAS ordering]
\label{lem:rect-geometry}\label{lem:planar-phases}\label{lem:rect-barrier}
For proper subsets $A'\subsetneq A_k$ and $E'\subsetneq E_k$,
\begin{align}
 h(G_k\cup A')^2&=s^2/2,&
 \Far(G_k\cup A')&=A_k\setminus A',\label{eq:rect-center-phase}\\
 h(G_k\cup A_k\cup E')^2&=s^2/4,&
 \Far(G_k\cup A_k\cup E')&=E_k\setminus E'.\label{eq:rect-edge-phase}
\end{align}
The completed phases have squared covering radii $s^2/4$ and $s^2/8$. On either input, every admissible nonfarthest candidate has greater length than a remaining farthest point. For the rectangle, the same length comparison holds if $P\subsetneq G_k$ meets every actual grid edge; then $h(P)=s$ and $\Far(P)=G_k\setminus P$.
\end{lemma}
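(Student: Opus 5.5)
We will split the argument into a geometric part, which establishes \eqref{eq:rect-center-phase}--\eqref{eq:rect-edge-phase}, and an arithmetic part, which compares coordinate lengths. For the geometric part, we work one fine square $Q$ of side $s$ at a time. We use that $\Omega_R$ and $\Omega_+$ are unions of such squares, and that every point of $\Omega$ lies in some $Q$ whose four corners belong to $G_k$.

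\emph{Center phase.} Consider $G_k\cup A'$. A point $y\in Q$ has squared distance at most $s^2/2$ to the nearest corner, with equality only at the center. If the center is present, then $d_P(y)^2\le s^2/4$ on $Q$. Hence $h^2=s^2/2$, attained exactly at the missing centers, whenever $A'\ne A_k$. Once $A'=A_k$, the worst points become the side midpoints, each at squared distance $s^2/4$ from both its endpoints and the adjacent centers. This also holds on boundary sides, where only one center is adjacent.

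\emph{Edge phase.} Here the corners, center, and present side midpoints of $Q$ give a subdivision into half-size configurations. A direct check on the triangles formed by a corner, a side midpoint and the center shows the following. If a side midpoint is missing, the maximum $s^2/4$ is attained only there. If all four are present, the covering radius drops to $s^2/8$, attained at the centers of the quarter-squares. We will invoke Theorem~\ref{thm:witness} so that only fragment vertices need checking. The finiteness of the fragment structure in $Q$ then reduces each claim to a bounded computation, which is translation- and scale-invariant.

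\emph{Arithmetic part.} We must show that every admissible nonfarthest candidate $z$ satisfies $\kappa(z)>\kappa(f)$ for a remaining farthest point $f$; in fact it suffices to show $\lambda(z)>\lambda(f)$. By Lemma~\ref{lem:focal}, such a $z$ lies in $B(f,H)$, is a fragment vertex or edge midpoint, and has $d_P(z)<H$.

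In the center phase, $f$ has coordinates $(2a+1)/2^{k+1}$ with odd numerators over $2^{k+1}$. The nearby candidates are grid points of $G_k$ (already sites, so excluded), points of $G_{k+1}$, or fragment vertices and midpoints with denominators $2^{k+2}$ or finer. We expect the main obstacle to be the points of $G_{k+1}$ that are shorter than $f$, namely side midpoints, which have one coordinate over $2^k$. For these we plan to prove inadmissibility with \eqref{eq:hole-set}. Such a point $z$ has $d_P(z)=s/2$, while another missing center at distance greater than $s/2$ from both $z$ and $P$ remains uncovered. This works because $A'$ is a proper subset, but it fails if $A_k\setminus A'$ is a single center adjacent to $z$. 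In that case we will show directly that $h(P\cup\{z\})=s/\sqrt2>s/2$ at that center.

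The remaining candidates all have a coordinate with denominator at least $2^{k+2}$. We will count bits via \eqref{eq:output-size} and show that their length exceeds $\lambda(f)$, with the finitely many small-$k$ cases checked individually (cf.\ the appendix). The edge phase is handled in the same way, except that the competitors now have denominators $2^{k+2}$. For the rectangle statement with $P\subsetneq G_k$ meeting every grid edge, the farthest points are the missing grid vertices at distance $s$. A candidate with $\lambda(z)\le\lambda(f)$ would have to lie in $G_k$ or in a coarser grid. We will then exclude such candidates by the same uncovered-point argument, using a second missing vertex or the full-gap bound.
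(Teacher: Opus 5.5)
\textbf{There is a genuine gap in your length comparison; the geometric part is fine.} The geometric part essentially follows the paper: the circumradius/convexity bound on each fine square and on its four center--side triangles. Your rectangle claim $h(P)=s$ is only asserted. It does follow, because corners meeting all four sides of a square include two opposite corners.

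The problem is in the arithmetic. You localize an admissible nonfarthest candidate only to $z\in B(f,H)$. You then assert that every remaining candidate, since it has a coordinate with denominator at least $2^{k+2}$, is longer than $f$. This is false: a finer denominator in one coordinate can be outweighed by the other coordinate being an integer.

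\emph{Counterexample on the rectangle.} Take $k\ge1$, let $f=(2-s/2,1-s/2)$, which is the $\kappa$-last center, and let $A'=A_k\setminus\{f\}$. The corner site $(2,1)$ owns the fragment $[2-s/2,2]\times[1-s/2,1]$. Its edge on $y=1$ has midpoint $z=(2-s/4,1)$, which is a genuine candidate. Then $\|z-f\|=\sqrt5\,s/4<s/\sqrt2=H$, yet $\lambda(z)=2k+9<4k+8=\lambda(f)$.

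\emph{The cross.} The same happens at every cell corner of the cross. For example, $z=(1+s/4,1)$ against $f=(1+s/2,1+s/2)$ gives $2k+9<4k+9$. Your ``finitely many small $k$'' escape therefore does not help.

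Such points can only be excluded by inadmissibility: here $d_P(z)=s/4<\|z-f\|$. That needs the sharp form $\|z-f\|\le d_P(z)$ of Lemma~\ref{lem:focal}, not merely $z\in B(f,H)$.

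\textbf{The missing idea is the paper's strict enclosure.}

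\emph{Deriving the enclosure.} Write $u=z-f$. Combine $\|z-f\|\le d_P(z)\le\|z-e\|$ over the available axial sites $e$ of $f$, with strictness obtained from a two-sided segment through $f$. This gives:
\begin{itemize}
\item $|u_1|+|u_2|<s/2$ for a center;
\item $\|u\|_\infty<s/4$ for a side midpoint;
\item $\|u\|_\infty<s/2$ for a rectangle grid vertex, replaced by a closed inward quadrant at convex corners.
\end{itemize}

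\emph{Finishing the comparison.} Trace the diagonal bisector rays from $f$ and the original facets through $f$. This shows that the only candidates inside the enclosure are $f$ shifted by a single finer dyadic amount in each changed coordinate, e.g.\ $f\pm(s/4)e_i$ for a center, with any other coordinate unchanged. For reduced $a/2^j\ge0$ and $t>j$, every feasible $a/2^j\pm2^{-t}$ is strictly longer than $a/2^j$. So the length comparison follows coordinatewise.

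\emph{Side midpoints.} The enclosure also excludes the side midpoints in $E_k$, since they lie on its boundary. It therefore replaces your separate treatment of them, which is flawed in two ways. First, if the only missing center $c$ has $z$ as a side midpoint, then $\|c-z\|=s/2$. So $d_{P\cup\{z\}}(c)=s/2$, not $s/\sqrt2$; the actual violation, $5s/8$, occurs at $c$ moved $s/8$ away from $z$. Second, the case of two missing centers both adjacent to $z$ falls under neither of your branches.
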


\begin{proof}
For vertices $v_i$ on a circle of center $c$ and radius $R$, and $y=\sum_i\alpha_i v_i$ with convex coefficients,
\begin{equation}\label{eq:planar-barycentric}
 \sum_i\alpha_i\|y-v_i\|^2=R^2-\|y-c\|^2.
\end{equation}
Apply this first to each square, then to the four triangles joining its center to its sides. Their circumradii are $s/\sqrt2$ and $s/2$, and equality can occur only at the square centers and side midpoints, respectively. Distinct points in each phase are separated by at least its covering radius, so uninserted ones remain exactly the farthest points. The argument covers the actual domain, including the cross boundary. A set meeting every square edge contains two opposite corners, which cover that square to distance $s$, with equality only at missing corners.

For the length comparison, Lemma~\ref{lem:focal} confines an admissible nonfarthest candidate to a neighborhood of a remaining farthest point. The original fragment edges then permit only finer dyadic perturbations, which strictly increase reduced coordinate length. The boundary cases and complete comparison are proved in Appendix~\ref{app:planar-comparison}.
\end{proof}

\begin{theorem}[The AAS sequences on the rectangle and cross]
\label{thm:rectangle-aai}\label{thm:cross}
For the rectangle \eqref{eq:rectangle-input}, the initial points are
\begin{equation}\label{eq:rect-start}
 (0,0),\ (1,1),\ (2,0),\ (0,1),\ (1,0),\ (2,1),
\end{equation}
giving $P_6=G_0$. For the cross \eqref{eq:cross}, they are
\begin{equation}\label{eq:crossstart}
 \begin{gathered}
 (1,1),\ (2,2),\ (0,2),\ (2,0),\ (1,3),\ (3,1),\\
 (0,1),\ (1,0),\ (1,2),\ (2,1),\ (2,3),\ (3,2),
 \end{gathered}
\end{equation}
giving $P_{12}=G_0$. Thereafter AAS inserts $A_k$ and then $E_k$, each in increasing $\kappa$-order. Only $x_2$ is nonfarthest, and $\rho(P_n)\in\{\sqrt2,2\}$ for all $n\ge2$. Both sequences have logarithmic coordinate heights and admit \eqref{eq:direct-cost}, including exact quality. On the rectangle, more precisely,
\begin{equation}\label{eq:rect-bits}
 P_{(2^{k+1}+1)(2^k+1)}=G_k,\qquad
 \height(x_n)\le2+\tfrac12\log_2n\quad(n\ge2).
\end{equation}
\end{theorem}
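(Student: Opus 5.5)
The proof splits into three parts: the finite initial segments, then an induction over refinement levels, then the cost and height bounds.

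\emph{Initial segments.} These are checked by exact computation from the single cell or the five-square cover. For the rectangle, Example~\ref{ex:aai-choice} already gives $x_2=(1,1)$ and $h(P_2)^2=2$. At each of the next steps I would list the fragment vertices and edge midpoints inside $B(f(P),H)$, as in Lemma~\ref{lem:focal}, and find the current farthest set through Theorem~\ref{thm:witness}. For every candidate that precedes the $\kappa$-first farthest point, I would exhibit a violating point, using the vertex--edge test of Theorem~\ref{thm:edge}. This reproduces \eqref{eq:rect-start}.

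For steps $3$ through $6$ I would use the last assertion of Lemma~\ref{lem:rect-geometry} with $k=0$. Once $P$ meets every edge of the $2\times1$ grid, $\Far(P)=G_0\setminus P$, and every admissible nonfarthest candidate is longer than a remaining corner. AAS therefore takes the remaining corners in $\kappa$-order. The cross is handled the same way, with the finite verification deferred to the appendix. The case analysis for steps $2$ through $5$ of the cross is the main obstacle, because the grid-edge condition fails there. For those steps I would compare candidates directly by checking the holes \eqref{eq:hole-set}.

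\emph{Inductive phases.} Starting from $G_k$, I would apply \eqref{eq:rect-center-phase} and \eqref{eq:rect-edge-phase} inductively. During the center phase, the farthest set is exactly the set of uninserted centers. By the length statement of Lemma~\ref{lem:rect-geometry}, no admissible nonfarthest candidate precedes any of them. Every farthest point is admissible, so AAS takes the $\kappa$-minimal remaining center. The edge phase is identical, and completing it gives $G_{k+1}$. This also shows that only $x_2$ is nonfarthest.

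\emph{Mesh ratio and cost.} For the mesh ratio, the insertion radii are $s/\sqrt2$ in the center phase and $s/2$ in the edge phase. Combining these with the covering radii of the lemma, via Theorem~\ref{thm:admissible}, gives $\rho=2h/r\in\{\sqrt2,2\}$, including at phase boundaries. Counting gives $|G_k|=(2^{k+1}+1)(2^k+1)$ for the rectangle. Since the coordinates at level $k$ lie in $2^{-k-1}\Z$ with bounded numerators, and $n\ge 2^{2k+1}$, we get $\height(x_n)\le k+3\le 2+\tfrac12\log_2 n$ after a routine check near the level boundaries.

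For the direct-generation bounds, I would verify the hypotheses of Proposition~\ref{thm:ldcc}. There are two phases per level. The preparation is sorting the $\Theta(4^k)$ centers or side midpoints by $\kappa$, which can be done by radix sort on the integer keys, namely the reduced lengths and the numerators over $2^{k+1}$. The squared covering radius is constant within a phase and changes at a counted final point. These facts give \eqref{eq:direct-cost}.
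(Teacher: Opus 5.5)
Your overall route is the paper's. The finite initial segments are checked explicitly, using Example~\ref{ex:aai-choice} and the appendix. Lemma~\ref{lem:rect-geometry} then forces $\min_\kappa\Far(P)$ throughout each center and edge phase, $\rho=2h_n/r_n$ gives the mesh ratios, and Proposition~\ref{thm:ldcc} gives the cost bounds. The lemma only gives, and you only need, that every admissible nonfarthest candidate comes after $\min_\kappa\Far(P)$, not after every remaining farthest point. Two steps need repair, and the first is a real error.

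\emph{The height bound.} Your chain $\height(x_n)\le k+3\le2+\tfrac12\log_2n$ is false as written. From $n>|G_k|>2\cdot4^k$ you only get $2+\tfrac12\log_2n>k+\tfrac52$, while $k+3\le2+\tfrac12\log_2n$ needs $n\ge4^{k+1}$. For $k\ge2$ it therefore fails on the whole range $|G_k|<n<4^{k+1}$, about a third of each level; for example $k=2$, $n=46$. No check near level boundaries can fix this. The extra bit comes from using unreduced numerators up to $2^{k+2}$; use reduction instead. A level-$k$ coordinate is either an integer in $\{0,1,2\}$, or $a/2^j$ with $a$ odd and $1\le j\le k+1$. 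In the second case $a<2^{j+1}$, because the coordinate is at most $2$. So numerator and denominator both have at most $k+2$ bits, and
$\height(x_n)\le k+2<k+\tfrac52<2+\tfrac12\log_2n$. The initial points have height at most $2$.

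\emph{Initial bookkeeping.} The grid-edge clause of the lemma does not give $x_3$ on the rectangle. $P_2=\{(0,0),(1,1)\}$ misses the edges $[(1,0),(2,0)]$ and $[(2,0),(2,1)]$, and indeed $\Far(P_2)=\{(2,0)\}\ne G_0\setminus P_2$. So $x_3$ needs an explicit rejection of the four preceding candidates $(0,1)$, $(1,0)$, $(0,1/2)$, $(1/2,0)$; the clause applies only from $P_3$ on.

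On the cross the clause is not available at all. It is stated only for the rectangle, and farthest points such as the reflex corners $(1,2)$ and $(2,1)$ are not among the local types treated in the appendix. Hence all of steps $2$ through $12$ come from direct checks, not just steps $2$ through $5$:
\begin{itemize}
\item step $6$ contains a genuine rejection, namely $(2,1)$ with witness $(3,3/2)$;
\item steps $7$--$12$ still require confirming that no earlier candidate lies in $B(f,H)$.
\end{itemize}
Likewise, the mesh ratios at these initial steps come from the tabulated $r_n^2$ and $h_n^2$, not from the phase radii $s/\sqrt2$ and $s/2$.
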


\begin{proof}
The finite initial choices and their quality measures are verified in Appendix~\ref{app:planar-entry}; Example~\ref{ex:aai-choice} already gives the rectangle's second point. The preceding lemma then forces the $\kappa$-smallest remaining farthest point at every step, proving the phase descriptions. The covering radius equals the latest insertion radius except when phase completion reduces it by $\sqrt2$; the initial data have the same alternatives. This gives the mesh ratios.

On the rectangle, $|G_k|=(2^{k+1}+1)(2^k+1)$; on the cross,
\begin{equation}\label{eq:crosscounts}
 |G_k|=5\cdot4^k+6\cdot2^k+1,\quad
 |A_k|=5\cdot4^k,\quad |E_k|=10\cdot4^k+6\cdot2^k.
\end{equation}
Each phase is prepared by scanning $O(4^k)$ grid addresses, removing shared occurrences and sorting by $\kappa$. Coordinates have common denominator $2^{k+1}$ and $O(k+1)$-bit numerators; phase counts give exact quality. Proposition~\ref{thm:direct-refinement} applies. For the sharper rectangle height bound, phase-$k$ numerators have at most $k+2$ bits and the insertion index exceeds $|G_k|\ge2\cdot4^k$; the initial points satisfy the bound directly.
\end{proof}

\subsection{LEFPS on four-dimensional coordinate-order domains}
\label{subsec:four}

For a nonempty set $\Pi$ of permutations of $\{1,2,3,4\}$, put
\begin{equation}\label{eq:orderdomain}
 \Delta_\pi=\{x\in[0,1]^4:x_{\pi(1)}\le\cdots\le x_{\pi(4)}\},
 \qquad \Omega_\Pi=\bigcup_{\pi\in\Pi}\Delta_\pi.
\end{equation}
These unions include the four-dimensional order polytopes~\cite{Stanley} and may be nonconvex. The four-cube refinement of~\cite[Theorem~3.5]{PZ} extends to them because a nearest lattice point can be moved into an order chamber without increasing distance.

Write
\[
 D_4=\{z\in\Z^4:\textstyle\sum_i z_i\text{ is even}\},\qquad
 D_4^*=\Z^4\cup\bigl(\Z^4+(1/2,1/2,1/2,1/2)\bigr).
\]
For a lattice $\Lambda$, denote its covering radius by $R(\Lambda)$ and its set of farthest points, or holes, by $\mathcal H(\Lambda)$.

\begin{lemma}[Lattice refinement at the domain boundary]\label{lem:fold}
\begin{align}\label{eq:latticeholes}
 R(D_4)&=1,&\mathcal H(D_4)&=D_4^*\setminus D_4,\\
 R(D_4^*)&=1/\sqrt2,&
 \mathcal H(D_4^*)&=(\tfrac12D_4)\setminus D_4^*.\notag
\end{align}
Distinct holes are separated by at least the corresponding covering radius. For either lattice, every integer $Q\ge1$ and $y\in Q\Omega_\Pi$,
\begin{equation}\label{eq:fold}
 \dist(y,\Lambda\cap Q\Omega_\Pi)=\dist(y,\Lambda).
\end{equation}
\end{lemma}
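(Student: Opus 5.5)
The proof has three parts: the lattice facts \eqref{eq:latticeholes}, the hole separation, and the folding identity \eqref{eq:fold}.

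\emph{Holes of $D_4$.} For $D_4$, a point $y\in\R^4$ is handled by rounding each coordinate to the nearest integer. This gives $z\in\Z^4$ with $|y_i-z_i|\le1/2$. If $\sum z_i$ is odd, we change the coordinate with the largest rounding error to its other neighbouring integer. The squared distance then becomes at most $3/4+1/4\cdot\ldots$, and a short case check bounds it by $1$. Equality forces all $|y_i-z_i|=1/2$ before the correction. The case $y\in\Z^4$ with odd coordinate sum also gives equality. Hence the holes are exactly $D_4^*\setminus D_4$.

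\emph{Holes of $D_4^*$.} For $D_4^*$ we use the two cosets $\Z^4$ and $\Z^4+\tfrac12\mathbf 1$. The minimum of the two rounding distances is at most $1/2$ in squared distance, so $R(D_4^*)=1/\sqrt2$. Alternatively, this follows because $D_4^*$ is a rotated and scaled copy of $D_4$, with scale $1/\sqrt2$; the map is $x\mapsto Mx$ with $M$ the standard Hadamard-type matrix. The holes are then the image of $D_4^*\setminus D_4$. We will verify directly that this image equals $(\tfrac12D_4)\setminus D_4^*$.

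\emph{Separation.} For $D_4$, distinct holes lie in the cosets $\mathbf e_1+D_4$ or $\pm\tfrac12\mathbf 1+D_4$. Differences within a coset are nonzero vectors of $D_4$, of norm at least $\sqrt2\ge1$. Differences across cosets lie in $D_4^*\setminus D_4$ shifted appropriately, of norm at least $1$. The $D_4^*$ case follows by the similarity above.

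\emph{Folding identity \eqref{eq:fold}.} The inequality $\ge$ is trivial. For $\le$, take $y\in Q\Delta_\pi$ for some $\pi\in\Pi$ and a nearest lattice point $z$. We modify $z$ in two steps without increasing $\|y-z\|$, so that it lands in $\Lambda\cap Q\Delta_\pi$.

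\emph{Clipping to the cube.} Replace $z$ by its coordinatewise clip to $[0,Q]$. Since each $y_i\in[0,Q]$, every $|y_i-z_i|$ weakly decreases. For $\Lambda$-membership we clip by an even amount, or clip to $0$ and $Q$ for half-integers. A half-integer below $0$ is moved to $1/2$, which stays within the same coset. Parity in $D_4$ needs care: moving a coordinate to $0$ or $Q$ may change the parity. We instead shift the coordinate by steps of $2$ toward the interval. This is valid because $z_i\notin[0,Q]$ implies it is outside by at least one unit in the appropriate direction. A residual case with distance $1$ from the boundary is handled by pairing with another coordinate, or by noting that $|y_i-z_i|$ is then still nonincreasing.

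\emph{Sorting into the chamber.} Sort the coordinates of $z$ according to $\pi$, i.e.\ apply the permutation that orders $z$ like $y$. The rearrangement inequality gives $\|y-\sigma z\|\le\|y-z\|$ when $y$ is sorted in the same order. Both lattices are invariant under coordinate permutations, and the cube is too.

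The main obstacle is making the clipping step lattice-preserving for $D_4$ when $Q$ has either parity. I will try first to avoid it: apply the rearrangement step first, then bound clipping cases. If that fails, I will use that the nearest point $z$ already satisfies $z_i\in[\lfloor y_i\rfloor-1,\lceil y_i\rceil+1]$, so the only issues are coordinates at $-1$ or $Q+1$. Reflecting such a coordinate across $0$ or $Q$, i.e.\ $z_i\mapsto -z_i$ or $2Q-z_i$, preserves parity and does not increase the distance.
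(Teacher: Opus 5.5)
Your proposal is correct in substance, and most of it is the paper's proof. For $D_4$ you round each coordinate and repair the parity at the coordinate with the largest rounding error. For \eqref{eq:fold}, your fallback (reflect a coordinate by $z_i\mapsto-z_i$ or $z_i\mapsto 2Q-z_i$, then permute into the chamber of $y$) is exactly the paper's argument; your rearrangement inequality plays the role of the paper's swap identity.

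The genuinely different piece is $D_4^*$. The paper treats it directly. It bounds $\min\{A,B\}\le(A+B)/2\le 1/2$ for the squared coset distances $A=\sum_i a_i^2$ and $B=\sum_i(1/2-a_i)^2$, and it reads off holes and separations from explicit coordinate descriptions. This is elementary and self-contained. Your similarity route transfers radius, holes and separation from $D_4$ in one step, but it works only once you exhibit the map. A scaled $4\times4$ Hadamard matrix will not do: at ratio $1/\sqrt2$ its entries are $\pm1/(2\sqrt2)$, so it sends $e_1+e_2$ to an irrational vector. Take instead $M=\tfrac12\bigl(\begin{smallmatrix}1&1\\1&-1\end{smallmatrix}\bigr)\oplus\tfrac12\bigl(\begin{smallmatrix}1&1\\1&-1\end{smallmatrix}\bigr)$. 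It maps the basis $e_1-e_2,\ e_2-e_3,\ e_3-e_4,\ e_3+e_4$ of $D_4$ to $e_2,\ \tfrac12(1,-1,-1,-1),\ e_4,\ e_3$, which generate $D_4^*$. Since $M^{-\mathsf T}=2M$, duality gives $MD_4^*=\tfrac12 D_4$, and hence $M(D_4^*\setminus D_4)=\tfrac12D_4\setminus D_4^*$, as you claim.

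Three local repairs are needed.

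\emph{The $D_4$ bound.} The expression ``$3/4+1/4\cdot\ldots$'' is not a valid bound. You must bound the other three errors by the largest error $t$, not by $1/2$. This gives $3t^2+(1-t)^2=1-2t(1-2t)\le 1$, with equality only when $t\in\{0,1/2\}$ and all errors equal $t$.

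\emph{The coset list.} Your list is wrong: $\mathbf 1\in D_4$, so $-\tfrac12\mathbf 1+D_4=\tfrac12\mathbf1+D_4$. The missing nonzero coset is $\tfrac12(1,1,1,-1)+D_4$. With the three correct cosets your separation argument works, because a difference between two distinct nonzero cosets lies in the third, and nonzero vectors of $D_4^*$ have norm at least $1$.

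\emph{The clipping step.} Discard the first clipping paragraph (``clip by an even amount'', ``pairing with another coordinate''). Also discard the idea of sorting before clipping, since reflecting after sorting can break the chamber order, e.g.\ $(-1,0,0,1)\mapsto(1,0,0,1)$. The clean justification is that a nearest point satisfies $|y_i-z_i|\le R(\Lambda)\le 1$. So the only offending values are $-1$ and $Q+1$, together with $-\tfrac12$ and $Q+\tfrac12$ for $D_4^*$. Because $Q\ge 1$, a single reflection moves each of these into $[0,Q]$; only then sort.
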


\begin{proof}
Round each coordinate to the nearest integer. If the sum of the rounded coordinates is even, the resulting point belongs to $D_4$ and has squared distance at most one. Otherwise, let $t\in[0,1/2]$ be the largest absolute rounding error. Changing the corresponding rounded coordinate by one in the direction of the original coordinate gives a point in $D_4$ at squared distance at most $4t^2+1-2t\le1$. If $t=0$, either direction may be used. Equality occurs precisely at integer vectors whose coordinate sum is odd, or at vectors all of whose coordinates belong to $\mathbb Z+1/2$. These are exactly the points of $D_4^*\setminus D_4$. For $D_4^*$, let $a_i\in[0,1/2]$ be the absolute distance from the $i$th coordinate to a nearest integer. The squared distances to the two cosets are
\[
 A=\sum_{i=1}^4 a_i^2,
 \qquad
 B=\sum_{i=1}^4(1/2-a_i)^2.
\]
Since $\min\{A,B\}\le (A+B)/2\le 1/2$, the covering radius is at most $1/\sqrt{2}$. Equality holds precisely when $A=B$ and $a_i\in\{0,1/2\}$ for every $i$. Thus, exactly two coordinates are integers, and the other two are half-integers. The asserted hole separations follow from these coordinate descriptions.

For \eqref{eq:fold}, take a nearest lattice point to $y$. Repeatedly reflect any coordinate outside $[0,Q]$ across the corresponding boundary plane at zero or $Q$ until the point lies in $[0,Q]^4$. Then sort its coordinates into the order of a chamber containing $y$. Both lattices are preserved. Reflections do not increase distance; swapping an inverted pair decreases squared distance by $2(y_j-y_i)(z_i-z_j)\ge0$ when $y_i\le y_j$ and $z_i>z_j$. The resulting point lies in the required chamber.
\end{proof}

\begin{theorem}[The LEFPS sequence on coordinate-order domains]
\label{thm:four}
From $x_1=0$, LEFPS on \eqref{eq:orderdomain} inserts $\mathbf1$, then all binary vectors in the domain with exactly two ones, in lexicographic order. This reaches $A_0$ in at most eight points, where
\begin{equation}\label{eq:fourphases}
 A_k=(2^{-k}D_4)\cap\Omega_\Pi,\qquad
 B_k=(2^{-k}D_4^*)\cap\Omega_\Pi.
\end{equation}
It then inserts $B_k\setminus A_k$ followed by $A_{k+1}\setminus B_k$, each in lexicographic order. At complete phases, $h(A_k)^2=4^{-k}$ and $h(B_k)^2=4^{-k}/2$; during a proper partial phase the covering radius remains at its starting value. Every nonsingleton design has mesh ratio in $\{\sqrt2,2\}$, and coordinate heights are logarithmic. Direct generation satisfies \eqref{eq:direct-cost}, including exact quality.
\end{theorem}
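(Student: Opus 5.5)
The plan is to verify the phase structure by induction on phases, using Lemma~\ref{lem:fold} to move the lattice covering statements onto the domain, and Theorem~\ref{thm:witness} together with the lexicographic rule \eqref{eq:canonical} to fix the order.

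\emph{Initial part.} From $x_1=0$, every chamber $\Delta_\pi$ contains $\mathbf1$, which is the unique point of $\Omega_\Pi$ at distance $2$ from $0$. So $x_2=\mathbf1$. By \eqref{eq:fold} with $Q=1$, for $P\subseteq D_4\cap\Omega_\Pi$ we have $d_P(y)=\dist(y,D_4)$ whenever $P$ contains all of $D_4\cap\Omega_\Pi$. For the partial designs $\{0,\mathbf1\}\cup S$, with $S$ a set of binary vectors having two ones, I will compute the farthest points directly. In a chamber $x_{\pi(1)}\le\dots\le x_{\pi(4)}$, the vertices are the chain vectors $0,e_{\pi(4)},e_{\pi(3)}+e_{\pi(4)},\dots$. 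The points at squared distance $1$ from $\{0,\mathbf1\}$ are exactly the weight-two vertices. The weight-one and weight-three vertices are at squared distance $1$ from $0$ or $\mathbf1$, and the chamber interiors are covered at radius $1$ by strict convexity. Hence $h^2=1$ is constant, $\Far$ consists of the remaining weight-two binary vectors in $\Omega_\Pi$, and they are inserted in lexicographic order. There are at most six of them, giving at most eight points, and the end state is $A_0$.

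\emph{Phases.} Assume $P=A_k\cup S$ with $S\subsetneq B_k\setminus A_k$. Scaling by $2^k$ and applying \eqref{eq:fold} with $Q=2^k$ gives $d_{A_k}(y)=2^{-k}\dist(2^ky,D_4)$. By \eqref{eq:latticeholes}, $h(A_k)^2=4^{-k}$, and $\Far(A_k)=(2^{-k}\mathcal H(D_4))\cap\Omega_\Pi=B_k\setminus A_k$.

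I also need the following: adding holes $S$ neither changes the maximal value nor removes uninserted holes from $\Far$. The value is unchanged because holes are pairwise at distance at least $R$. The farthest set is exactly the uninserted holes because $d_P\le d_{A_k}$ and equality at $R$ forces $y$ to be a hole not in $S$.

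The same argument with $D_4^*$ and holes $(\tfrac12D_4)\setminus D_4^*$, scaled by $2^{-k}$, gives the second phase, with $h^2=4^{-k}/2$. Its end state is $2^{-k-1}D_4\cap\Omega_\Pi=A_{k+1}$, using $\tfrac12 D_4\cup\ldots$, that is, $2^{-k}\tfrac12D_4=2^{-(k+1)}D_4$. Since \eqref{eq:canonical} takes the lexmin of $\Far$, each phase is traversed in lexicographic order. The covering radius stays at its starting value during a proper partial phase and drops by a factor of $\sqrt2$ at completion.

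\emph{Mesh ratio, heights and cost.} By \eqref{eq:greedy-separation}, $\delta(P_n)=h_{n-1}$, so $\rho(P_n)=2h_n/h_{n-1}$. This equals $2$ within a phase and $\sqrt2$ at a phase completion, including during the initial part. Coordinates in level $k$ have denominator $2^{k+1}$ and numerators of $O(k)$ bits. Since $|A_k|\asymp16^k$, this gives logarithmic heights.

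For direct generation, I will apply Proposition~\ref{thm:direct-refinement}. Phase preparation enumerates the $O(16^k)$ scaled lattice points in $[0,2^{k+1}]^4$. For each one it tests membership by checking whether its coordinate order is one permitted by $\Pi$ (a fixed finite check), then radix-sorts lexicographically by integer numerators. Exact quality comes from the phase counter.

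\emph{Main obstacle.} The step I expect to be hardest is the claim that inserting a subset of holes leaves the remaining holes exactly farthest and creates no new farthest points on the nonconvex boundary. The reduction \eqref{eq:fold} handles the boundary, but it must be applied to $P=A_k\cup S$, which is not a lattice. I will handle this by using $d_P=\min\{d_{A_k},d_S\}$. Applying \eqref{eq:fold} only to the lattice part and using hole separation for $S$ avoids needing folding for $S$.
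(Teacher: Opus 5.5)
Your treatment of the later phases matches the paper. You fold the lattice part $A_k$ or $B_k$ with Lemma~\ref{lem:fold} and use hole separation for the inserted holes. Your direct-generation plan also matches; the paper scans $[0,2Q]^4$ lexicographically with parity filters, so no radix sort is needed, though sorting is harmless.

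\textbf{The gap is in the initial part, where both the numbers and the justification are wrong.}

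\emph{The computation.} A binary vector with two ones is at squared distance $2$ from both $0$ and $\mathbf1$, not $1$. Hence $h(P_n)^2=2$ for $2\le n<|A_0|$, and the covering radius drops to $h(A_0)^2=1$ only when the last such vector is inserted. Your value $h^2=1$ would give $\rho(P_2)=2h_2/h_1=1\notin\{\sqrt2,2\}$. It would also remove the drop at the end of the initial part. So the mesh-ratio and exact-quality claims for that part do not follow from what you wrote.

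\emph{The justification.} On a chamber, $d_{\{0,\mathbf1\}}(y)^2=\min\{\|y\|^2,\|y-\mathbf1\|^2\}$ is not convex. Its maximum therefore need not occur at the chain vertices $0,e_{\pi(4)},\dots,\mathbf1$, so ``strict convexity'' over the chamber proves nothing. By Theorem~\ref{thm:witness}, you must also examine the fragment vertices cut out by the bisector $\sum_i y_i=2$. Examples are $(\tfrac12,\tfrac12,\tfrac12,\tfrac12)$, or $(0,\tfrac23,\tfrac23,\tfrac23)$ in the chamber $x_1\le x_2\le x_3\le x_4$. Your check omits these.

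\textbf{The repair is the paper's inequality.} For $y\in[0,1]^4$ with $s=\sum_i y_i$, one has $\|y\|^2\le s$ and $\|\mathbf1-y\|^2\le 4-s$. Hence
$d_{\{0,\mathbf1\}}(y)^2\le\min\{s,4-s\}\le 2$.
Equality forces $s=2$ and $y_i^2=y_i$ for every $i$, so it holds exactly at binary vectors with two ones. Each chamber $\Delta_\pi$ contains one of them, namely $e_{\pi(3)}+e_{\pi(4)}$. They are mutually at least $\sqrt2$ apart. Your hole-separation argument then handles the partial designs $\{0,\mathbf1\}\cup S$ exactly as in the later phases. This gives the constant value $h^2=2$, the lexicographic insertion order, and the correct mesh ratios $\sqrt2$ at $P_2$ and at $A_0$ and $2$ in between.
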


\begin{proof}
The unique farthest point from zero is $\mathbf1$. For $P=\{0,\mathbf1\}$, putting $s=\sum_i x_i$ gives $d_P(x)^2\le s\le2$ when $s\le2$; use $\mathbf1-x$ otherwise. Equality holds exactly at binary vectors with two ones, which occur in every chamber and are mutually at least $\sqrt2$ apart. This proves the initial ordering.

Lemma~\ref{lem:fold} identifies the holes at each subsequent phase. Each phase is nonempty in every chamber, and its holes are mutually at least the old covering radius apart. Thus, inserting one leaves all remaining holes farthest, while no other point can become a maximizer at that radius. This proves the phase order and radii. Equation~\eqref{eq:greedy-separation} gives the mesh ratios.

At scale $Q=2^k$, scan addresses $u\in[0,2Q]^4\cap\Z^4$ and output $u/(2Q)$ when it lies in the domain and has the phase's parity: four odd coordinates, or four even coordinates with $\sum_i u_i\equiv2\pmod4$, for the first phase; exactly two odd coordinates for the second. Lexicographic scans avoid duplicates between chambers. There are $O(Q^4)$ visits, while coverage and grid containment give $|A_k|=\Theta_\Pi(Q^4)$. Prepare each phase list and its count on its first request. Its count then identifies the final point and the change in quality without another scan. Proposition~\ref{thm:direct-refinement} applies.
\end{proof}

\begin{corollary}[Rational similarity images]\label{cor:fourframe}
The preceding result extends to $a+B\Omega_\Pi$ from $a$, for rational $a,B$ with $B^{\mathsf T}B=\sigma^2I$, $\sigma^2>0$, using lexicographic ties in the image coordinates.
\end{corollary}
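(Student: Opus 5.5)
The plan is to transport the whole construction through the similarity map $\phi(y)=a+By$ and check that every ingredient of Theorem~\ref{thm:four} is preserved. Since $B^{\mathsf T}B=\sigma^2I$, we have
\[
 \|\phi(y)-\phi(z)\|=\sigma\|y-z\|,
\]
so $d_{\phi(P)}(\phi(y))=\sigma\,d_P(y)$ for every finite $P$. Consequently $h$, $q$, $\delta$ and insertion radii scale by $\sigma$, the mesh ratio is unchanged, and
\[
 \Far(\phi(P))=\phi(\Far(P))
\]
on the image domain $a+B\Omega_\Pi$. Because $\phi(0)=a$, the LEFPS iterates from $a$ are, up to tie-breaking, the images of the iterates from $0$. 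By Theorem~\ref{thm:lfpm}, LEFPS on the image does not depend on its cover, so we may use the image cells $\phi(\Delta_\pi)$, which are rational polytopes because $a$ and $B$ are rational.

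The main obstacle is the tie rule. Lexicographic order in the image coordinates generally differs from the order of the preimages, so the insertion order inside each phase changes. The phase structure itself is unaffected. Theorem~\ref{thm:four} and Lemma~\ref{lem:fold} show that at each stage the farthest set is a full phase remainder: first $\{\mathbf1\}$, then the remaining two-one binary vectors, then $B_k\setminus A_k$, then $A_{k+1}\setminus B_k$. The covering radius stays constant during a proper partial phase, and inserting one hole leaves all the other holes farthest. These facts depend only on distances. So I will prove by induction that the image sequence inserts $\phi$ of each phase set in turn, in lexicographic order of the image points. At every step the image farthest set is $\phi$ of the remaining holes of the current phase, and lexmin picks the image-lexicographically smallest one. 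This gives the statement's ``lexicographic ties in the image coordinates.'' The mesh ratios stay in $\{\sqrt2,2\}$, and the squared radii are $\sigma^2$ times the original values, which is exact and rational.

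For direct generation, I will verify the hypotheses of Proposition~\ref{thm:direct-refinement}. Image coordinates are $a+Bu/(2Q)$ with $u$ integer, hence of the form $z/(D2^e)$, where $D$ is a fixed common denominator of $a$ and $B$. The numerators $z$ are obtained from $u$ by fixed integer combinations and shifts and have $O(k+1)$ bits. The phase lists are prepared exactly as in the proof of Theorem~\ref{thm:four}. The only change is the sorting step: the mapped points are sorted lexicographically by stable radix sort on their common-denominator integer numerators, coordinate by coordinate. This is a fixed number of $O(k+1)$-bit keys, as the proposition permits. Phase counts again identify phase completion. Proposition~\ref{thm:direct-refinement} then gives \eqref{eq:direct-cost}, and the logarithmic heights follow from the same coordinate form.
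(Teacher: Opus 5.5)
Your proposal is correct and follows essentially the paper's proof. Distances scale by $\sigma$, so the order-independent phase structure transports to the image domain, with ties broken lexicographically in image coordinates. Direct generation enumerates the same phase sets, writes image coordinates over a common denominator, and radix-sorts their integer numerators; the fixed rational coefficients of $a$ and $B$ keep the hypotheses of Proposition~\ref{thm:direct-refinement}. Your version just makes explicit the tie-breaking induction that the paper's three-line proof leaves implicit.
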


\begin{proof}
Squared distances scale by $\sigma^2$. Enumerate the same phase sets, form image coordinates over a common denominator, and radix-sort their numerators. Fixed rational coefficients preserve the arithmetic bounds of Proposition~\ref{thm:direct-refinement}.
\end{proof}

The numerical example Q4 in the next section uses the single chamber
\begin{equation}\label{eq:simplex}
 \Delta=\{0\le x_4\le x_3\le x_2\le x_1\le1\}.
\end{equation}
General affine shears need not preserve \eqref{eq:fold}.

\subsection{Independent components and the limitation of AAS scaling}
\label{subsec:canonical-merge}\label{subsec:scaling-obstruction}

\begin{proposition}[Merging independent LEFPS constructions]
\label{prop:direct-merge}
Let $\Omega=\bigcup_{i=1}^r\Omega_i$ be a fixed finite union, and let $P_0$ be a fixed already generated certified design or singleton. Suppose each $P_0\cap\Omega_i$ is nonempty, $\Omega_i\cap\Omega_j\subseteq P_0$ for $i\ne j$, and throughout continuation
\begin{equation}\label{eq:merge-independence}
 d_P(y)=d_{P\cap\Omega_i}(y)\qquad(y\in\Omega_i).
\end{equation}
Assume that each local generator produces the LEFPS continuation on $\Omega_i$ from $P_0\cap\Omega_i$, with all distance ties broken lexicographically in the common ambient coordinates. Each local generator must satisfy \eqref{eq:direct-cost}, including all work needed to make its current squared covering radius and its next point available at every local stopping prefix, without inserting that point, and to refresh this pair after an insertion. Assume also that comparing these radii and next points costs $O_I(1)$ unit-cost and $O_I(\log(N+2))$ bit operations. Selecting the largest squared local radius, with lexicographic next-point ties, and advancing only that generator then gives the global LEFPS continuation from $P_0$ with the same bounds and exact quality. For $P_0=\{p_0\}$, a sufficient geometric condition is
\begin{equation}\label{eq:obtuse-domains}
 (y-p_0)\cdot(z-p_0)\le0
 \quad(y\in\Omega_i,\ z\in\Omega_j,\ i\ne j).
\end{equation}
\end{proposition}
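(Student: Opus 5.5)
The argument has two parts: the merge rule reproduces the global LEFPS rule, and the cost bounds survive merging. Finally, \eqref{eq:obtuse-domains} will be shown to imply \eqref{eq:merge-independence}.

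\emph{Correctness.} Under \eqref{eq:merge-independence}, for every $y\in\Omega_i$ we have $d_P(y)=d_{P\cap\Omega_i}(y)$. Hence
\[
 h(P)=\max_i h_{\Omega_i}(P\cap\Omega_i),\qquad
 \Far(P)=\bigcup_{i:\,h_i=h(P)}\Far_{\Omega_i}(P\cap\Omega_i).
\]
The lexicographically smallest global farthest point is therefore the lexicographically smallest among the local next points whose local radius is maximal. Because all generators use the common ambient coordinates for ties, this is exactly the selection rule. A point lying in several $\Omega_i$ lies in $P_0$ by hypothesis, so it is never a candidate. Each new point thus belongs to exactly one $\Omega_i$.

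Inserting $x\in\Omega_i$ leaves $P\cap\Omega_j$ unchanged for $j\ne i$. By \eqref{eq:merge-independence}, the local distance functions on the other components are unchanged, so only generator $i$ needs to advance. The other generators' current (radius, next point) pairs remain valid. Induction on $n$ then shows that the merged stream equals the global LEFPS continuation. Exact quality follows because $h_n^2$ is the maximum of the $r$ local squared radii, and $\delta(P_n)^2$ is updated by the selected radius as in Subsection~\ref{subsec:direct-criterion}.

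\emph{Cost.} Keep the $r$ current pairs in a fixed-size array. Each step performs $O(r)=O_I(1)$ comparisons, costing $O_I(1)$ unit work and $O_I(\log(N+2))$ bit work by hypothesis. If generator $i$ has produced $N_i$ points with $\sum_i N_i\le N$, its cost is $O_I(N_i)$ structural work and $O_I(N_i\log(N_i+2))$ bit work, including the readiness of its next pair. Summing over the fixed number $r$ of generators gives \eqref{eq:direct-cost}. The lower bound comes from Lemma~\ref{lem:encoding}. The one point needing care is that a generator may have prepared its next pair but never been advanced. Its preparation is charged within its own prefix bound at the current stopping prefix, which the hypothesis explicitly covers.

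\emph{The sufficient condition.} I expect this to be the main obstacle. Take $y\in\Omega_i$ and a site $z\in P\cap\Omega_j$ with $j\ne i$. By \eqref{eq:obtuse-domains},
\[
 \|y-z\|^2=\|y-p_0\|^2+\|z-p_0\|^2-2(y-p_0)\cdot(z-p_0)\ge\|y-p_0\|^2 .
\]
Since $p_0\in P_0\cap\Omega_i$ (it lies in every $\Omega_i$ as the shared point, or at least by the nonemptiness hypothesis), $d_{P\cap\Omega_i}(y)\le\|y-p_0\|\le\|y-z\|$. Sites outside $\Omega_i$ therefore never beat $p_0$, which gives \eqref{eq:merge-independence} for all continuations. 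I would also check two further points. First, taking $y=z\in\Omega_i\cap\Omega_j$ in \eqref{eq:obtuse-domains} forces $y=p_0$, so the intersection condition holds. Second, nonemptiness of $P_0\cap\Omega_i$ is automatic because $p_0$ must belong to each $\Omega_i$.
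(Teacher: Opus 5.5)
Your proposal is correct and follows essentially the same route as the paper. Independence gives $h(P)=\max_i h_{\Omega_i}(P\cap\Omega_i)$ and the merged lexicographic selection, the overlap condition confines each new point to one component, the costs sum over fixedly many generators with Lemma~\ref{lem:encoding} for the lower bound, and \eqref{eq:obtuse-domains} yields $\|y-z\|^2\ge\|y-p_0\|^2$ and, with $y=z$, $\Omega_i\cap\Omega_j\subseteq\{p_0\}$.

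One correction: your closing claim that $P_0\cap\Omega_i\neq\emptyset$ is automatic is false. For example, $\Omega_1=[0,1]$, $\Omega_2=[-2,-1]$, $p_0=0$ satisfies \eqref{eq:obtuse-domains} with $p_0\notin\Omega_2$. So $p_0\in\Omega_i$ must come from the stated nonemptiness hypothesis, as the paper's argument implicitly uses.
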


\begin{proof}
Independence gives $h_\Omega(P)=\max_i h_{\Omega_i}(P\cap\Omega_i)$. Each local next point is the lexicographically first maximizer of its local distance function. Among the components attaining the largest radius, the lexicographically smallest next point is therefore the lexicographically first global maximizer. The overlap assumption ensures that each new point advances only one component, and independence leaves the other local distance functions unchanged.

Write $n_i=|P\cap\Omega_i|$. Since $\sum_i n_i=N+O_I(1)$, the local generation costs, including all next-point preparation and one final export of each local state, sum to $O_I(N)$ unit-cost work and $O_I(N\log(N+2))$ bit work. There are fixedly many components, so merging and maintaining the quality measures have the same bounds. The coordinate output has the corresponding upper bound, and Lemma~\ref{lem:encoding} gives the matching lower bound in \eqref{eq:direct-cost}. The largest updated local radius gives $h(P)$, and the selected radius is the global insertion radius, which updates $\delta(P)$.

Under \eqref{eq:obtuse-domains}, an outside site $z$ satisfies $\|y-z\|^2\ge\|y-p_0\|^2$, proving independence; setting $y=z$ shows that components meet only at $p_0$.
\end{proof}

For example, $\Omega_\Pi$ and $-(3/5)\Omega_\Sigma$ meet these conditions at zero, even with different refinement levels. In contrast, AAS need not preserve its phase order under rational scaling: reduction can change the ranking by coordinate length.

\begin{proposition}[Rational scaling can change the AAS phases]
\label{prop:scaled-square}
On the single-cell square $[0,1/q]^2$, with odd $q\ge5$, let $G_k$ be the full grid of spacing $s=1/(q2^k)$ and $A_k$ its square centers. If
\begin{equation}\label{eq:badlevels}
 q\mid2^{k+2}-1,\qquad k+1\ge\lfloor\log_2q\rfloor,
\end{equation}
AAS starting from $G_k$ cannot choose exactly $A_k$ as its next phase. There are infinitely many such $k$, so the standard center--edge phase order cannot hold eventually on this fixed square.
\end{proposition}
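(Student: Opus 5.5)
The plan is to argue by contradiction. Assume AAS, started from $G_k$, inserts exactly the points of $A_k$ in its next $|A_k|$ steps. I will then exhibit an intermediate design $P=G_k\cup A'$, with $A'\subsetneq A_k$, at which some admissible candidate outside $A_k$ strictly precedes, in $\kappa$-order, every remaining center in $A_k\setminus A'$. By \eqref{aai:eq:full}, AAS would then insert a non-center before the phase is complete, which is the contradiction.

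First I would compute the geometry of such a $P$ by rescaling Lemma~\ref{lem:rect-geometry} to spacing $s=1/(q2^k)$. This gives $h(P)^2=s^2/2$ and $\Far(P)=A_k\setminus A'$, and it describes the fragments explicitly. Consider a grid vertex adjacent to an already inserted center. Its fragment has been cut by the bisector with that center, which is a line at distance $s\sqrt2/4$ from the vertex. The resulting vertices and edge midpoints lie on the lattice $\frac{s}{4}\Z^2$, and in general no coarser lattice contains them. Next, Theorem~\ref{thm:edge} together with Lemma~\ref{lem:focal} gives a local admissibility test: a candidate $z$ near a remaining center $c$ is admissible exactly when $B(z,d_P(z))$ covers the hole set $\{y:d_P(y)>d_P(z)\}$. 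For a point $z$ on a quarter lattice close to $c$, I would verify this test by a finite computation in the scaled configuration. Because the configuration is invariant under translations by $s$, that computation does not depend on $k$ or $q$.

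The arithmetic input comes from \eqref{eq:badlevels}. A generic center has coordinates $(2i+1)/(q2^{k+1})$ with $q\nmid 2i+1$, so its reduced denominator is $q2^{k+1}$. Its length under \eqref{eq:output-size} is therefore about $2\bigl(\bl(q)+k+1\bigr)$, plus the numerator lengths. A quarter-lattice coordinate has the form $m/(q2^{k+2})$. The condition $q\mid 2^{k+2}-1$ means the residues of $2^{k+2}$ modulo $q$ repeat at a convenient place. I would use this to choose the inserted set $A'$ and a remaining center $c$ so that the nearby admissible candidate $z$ has numerators divisible by $q$. Then both coordinates of $z$ reduce to dyadic rationals with denominator at most $2^{k+2}$. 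The second condition, $k+1\ge\lfloor\log_2 q\rfloor$, is exactly what makes the loss of about $\log_2 q$ bits in each denominator outweigh the single extra power of two. It also lets me absorb the numerator lengths, which are bounded by $\bl(2^{k+2})$ for $z$ and comparable for the centers. The result is $\lambda(z)<\lambda(c')$ for every remaining center $c'$ whose coordinates do not reduce.

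The main obstacle is controlling the remaining centers that do reduce, namely those with $q\mid 2i+1$ in some coordinate, since these can also be short. I would choose $A'$ to contain all such reducible centers, so that every remaining center has full denominator $q2^{k+1}$ in both coordinates. Such an $A'$ is still a proper subset, because reducible centers make up only about a $2/q$ fraction of $A_k$. I must also check that $z$ stays admissible after those centers are inserted. This holds if I pick $z$ adjacent to a block of inserted centers but far from the reducible ones, which Lemma~\ref{lem:focal} and Lemma~\ref{lem:compact} make a local check. Finally, the standard Dirichlet argument shows that $q\mid 2^{k+2}-1$ holds for all $k+2$ that are multiples of the order of $2$ modulo $q$. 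Hence \eqref{eq:badlevels} holds for infinitely many $k$, and the eventual center--edge phase order fails.
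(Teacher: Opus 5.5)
There is a genuine gap. Your contradiction framework is right, but the step where you \emph{choose} the intermediate design $P=G_k\cup A'$ fails, for two reasons.

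First, $A'$ is not yours to pick. Under the hypothesis, every remaining center is farthest, hence an admissible candidate. So at each step AAS must insert the $\kappa$-smallest remaining center, and the designs it actually passes through are $G_k\cup A'$ with $A'$ a $\kappa$-initial segment of $A_k$.

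Second, and decisively, admissibility of a non-center $z$ is not a local check near one center. Every remaining center $c'$ has $d_P(c')=h(P)=s/\sqrt2>d_P(z)$, so it lies in the hole set \eqref{eq:hole-set}. Lemma~\ref{lem:focal}, applied to each $c'\in\Far(P)$, forces $\|z-c'\|\le d_P(z)<s/\sqrt2$ for all remaining centers simultaneously. As soon as two remaining centers are more than $2d_P(z)$ apart, no non-center is admissible. Your $A'$ (the reducible centers plus a block) leaves most centers uninserted, so this is exactly your situation, and no contradiction arises. In effect, the contradiction can only come from the design with a single remaining center. By the first point, that center must be the $\kappa$-last one; this is the design the paper's proof uses.

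Everything therefore hinges on identifying the $\kappa$-last center $f$ and a shorter admissible candidate at $P=G_k\cup(A_k\setminus\{f\})$. This is where \eqref{eq:badlevels} is really used.
\begin{itemize}
\item Since $q\mid 2^{k+2}-1$, one has $\gcd(2^{k+1}-1,q)=1$. So the upper-right center $f=(u,u)$, with $u=(2^{k+1}-1)/(q2^{k+1})$, is already reduced and attains the maximal numerator and denominator lengths among center coordinates. It is also lexicographically largest, hence $\kappa$-last.
\item The candidate $x=f+(s/4,0)$ is the midpoint of the Voronoi edge from $f$ to the right side midpoint. Its first coordinate is $(2^{k+2}-1)/(q2^{k+2})=b/2^{k+2}$, where $b=(2^{k+2}-1)/q$ is odd and $\bl(b)=k+2-\lfloor\log_2 q\rfloor$. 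The last identity uses $k+1\ge\lfloor\log_2 q\rfloor$.
\item Candidates near a center have the form $f\pm(s/4)e_i$ and change only one coordinate. So your plan of making both coordinates of $z$ collapse is unavailable when the remaining center has full denominators. One coordinate suffices: $\lambda(f)-\lambda(x)=2\lfloor\log_2 q\rfloor-2\ge2$.
\end{itemize}
You would still need the exact one-hole verification $d_P(x)^2=5s^2/16>169s^2/576=h(P\cup\{x\})^2$. Your argument for infinitely many $k$, via the multiplicative order of $2$ modulo $q$, is fine.
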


\begin{proof}
Appendix~\ref{app:scaling-proof} constructs an admissible edge midpoint that is shorter than the last remaining center. For $q=7$, $k=1$, these are $x=(1/8,3/28)$ and $f=(3/28,3/28)$, with lengths 13 and 15.
\end{proof}

This obstruction concerns the specified AAS order; it neither determines coordinate growth on the scaled square nor excludes other optimal constructions there.

\section{Numerical experiments}\label{sec:experiments}
\subsection{Implementation and measurement protocol}\label{subsec:measurement}
The experiments examine three questions: how much local updates save
over rebuilding the Voronoi diagram, how the selection rule affects
coordinate lengths and runtime, and how much direct generation saves
when it reproduces the same ordered points.
We use PZ-Rebuild as a global-reconstruction baseline: it follows the
farthest-point rule of Pronzato and Zhigljavsky~\cite{PZ}, with
lexicographic ties, and rebuilds the restricted Voronoi diagram on the
input cover at every prefix. LEFPS uses the same geometric routines and
produces the same ordered points, but updates only affected fragments.
AAS uses its own selection rule and generally produces different points.

The timing experiments used an AMD~9800X3D processor with 32\,GB of RAM,
running Python~3.12.14, gmpy2~2.3.1, and GMP~6.3.0 under Windows~11.
Runs used one worker and one computational thread. The generic implementations maintain Voronoi fragments and use comparison
heaps. The encoded index used in the bit-complexity analysis
(Lemma~\ref{lem:priority}) is tested separately for correctness, but is
not used in these timings. On the Q4 example, generic
spatial queries scan occupied buckets rather than using the bounded-neighborhood
grid query of Section~\ref{subsec:lefps-algorithm}; the Q4 timings therefore measure this scanning implementation.
These timings do not directly test the output-sensitive bit bound.

We distinguish point output (S0), point-and-quality output (S1), and
state export (S2). S0 writes the ordered points as reduced rational
coordinates. S1 additionally writes the exact quality measures
$(h(P_n)^2,\delta(P_n)^2)$ at every prefix with $n\ge2$.
S2 saves the state needed to resume generation: fragment geometry for
the generic methods, and continuation and quality data for direct
stream execution (DSE). Every method maintains
its state through $P_N$, including PZ-Rebuild's final reconstruction.
Service times start after common imports and include parsing,
initialization, numerator--denominator text serialization, buffered
output, and flushing. The theoretical bounds use binary output.
Baseline runs include file closure; extended runs stop after the final
flush. Full-cold times additionally include process startup, request
dispatch, and a terminal state export (S2).

The nine baseline inputs use three serial repetitions. PZ-Rebuild stops
at the common prefix $N$ in Table~\ref{tab:performance}; LEFPS and AAS
continue to $2N$. The extended B2 comparison (S0, $N=1024$) and the four
generic/DSE comparisons in Table~\ref{tab:dse} use five paired repetitions
with rotating execution order. The extended stopping points were selected separately under preset
time, coordinate-height, and memory limits.
Timings are reported as medians and observed ranges; paired ratios are
$\operatorname{median}_b(T_{A,b}/T_{B,b})$. Profiling and validation are
performed separately from timing.

\subsection{Test domains and validation}\label{subsec:experimental-inputs}
B$d$ denotes the unit cube $[0,1]^d$, and S$d$ the standard simplex
$\{x\ge0:\sum_jx_j\le1\}$.
R2 is the single-cell rectangle $[0,2]\times[0,1]$; X2 is the five-square
cross~\eqref{eq:cross}, seeded at $(1,1)$. All other seeds are the origin.
H2 is the convex hexagon with vertices $(0,0),(2,0),(3,1),(2,3),(1,4),(-1,2)$.
F2 is the square frame $[0,3]^2\setminus(1,2)^2$, covered by
$[0,3]\times[0,1]$, $[0,3]\times[2,3]$, $[0,1]\times[1,2]$, and
$[2,3]\times[1,2]$. D2 comprises the triangles
$\conv\{(0,0),(1,0),(0,1)\}$ and $\conv\{(3,0),(4,0),(3,1)\}$.
I1 is the interval union $[0,1]\cup[5/3,8/3]$, and Q4 is the ordered simplex
$0\le x_4\le x_3\le x_2\le x_1\le1$. Other than the stated unions,
inputs use one convex cell, without additional triangulation.
Figure~\ref{fig:exp3} illustrates three planar geometries.
An equivalent-cover test combines $[0,2]\times[0,1]$ with either
$[0,1]\times[1,2]$ or $[0,1]\times[1/2,2]$; both covers give the same
ordered LEFPS output.

Independent exact-rational validation rebuilds the original-cover fragments.
For AAS, it tests the full candidate pool in $\kappa$-order by explicitly
intersecting edges with bisector hyperplanes, without using the local
candidate filter or the quartic predicate of Section~\ref{sec:admissible}.
Geometry and decisions are checked at every prefix through 16 points
on planar inputs, eight on B3/S3, and four on Q4, with later checks
including Table~\ref{tab:quality}. I1 is checked through 1024.
Encoded-index operations, including exclusions and restoration, are
compared separately with an exact sorted-list reference.

Timed implementations reproduce the next three points and quality pairs
after restarts at selected phase boundaries. Separate exact-rational
reference implementations restore every prefix $n=1,\ldots,128$ on R2, X2, Q4, and I1;
all reproduce the next 16 ordered points and quality pairs, covering
both phase interiors and transitions.

\begin{figure}[!htbp]
\centering
\includegraphics[width=\textwidth]{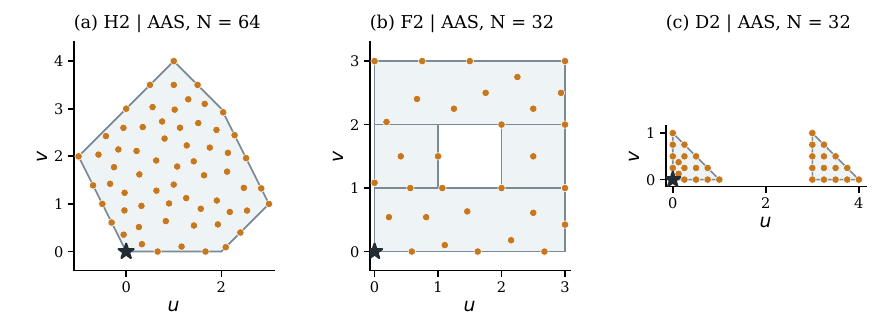}
\caption{AAS designs on (a) the convex hexagon (H2), (b) the square frame
$[0,3]^2\setminus(1,2)^2$ (F2), and (c) the disconnected triangle pair
(D2). Stars mark seeds and circles mark sites; exact inputs are defined
in Section~\ref{subsec:experimental-inputs}.}
\label{fig:exp3}
\end{figure}

\subsection{Local maintenance and policy effects}
Table~\ref{tab:performance} compares all baseline inputs; selected curves
and fragment counts appear in Figure~\ref{fig:exp4}. PZ-Rebuild and LEFPS
have identical ordered output, with paired median PZ-Rebuild/LEFPS time
ratios ranging from 2.43 to 40.76 across the inputs. On R2 at $N=128$,
PZ-Rebuild constructs a total of $8256=\sum_{n=1}^{128}n$ owner--cell fragments,
whereas LEFPS and AAS perform 707 and 582 fragment changes, respectively.

AAS is faster than LEFPS on R2 and X2 but slower on the other seven
inputs. In the extended B2 comparison at $N=1024$, LEFPS/AAS take
0.371/0.682 seconds, with paired AAS/LEFPS ratio 1.84 [1.83, 1.85],
despite output lengths of 17510/17281 bits. The slightly shorter output therefore does not translate into a runtime
advantage: candidate selection and admissibility testing also contribute
to the cost.

\begin{table}[!htbp]\centering\small\setlength{\tabcolsep}{4pt}
\caption{Baseline point-output (S0) timings at common prefixes, with three
repetitions. Times are medians [minimum, maximum] in seconds; time ratios
are medians of paired run ratios. PZ-Rebuild/LEFPS compares the same
stream, whereas AAS/LEFPS compares different streams. $L_A/L_L$ is the
output-length ratio. PZ-Rebuild reconstructs globally at each prefix;
Q4 uses the occupied-bucket scan of Section~\ref{subsec:measurement}.}
\label{tab:performance}
\begin{tabular}{@{}lrccrrr@{}}\toprule
Input & $N$ & PZ-Rebuild & LEFPS & PZ/LEFPS & AAS/LEFPS & $L_A/L_L$\\\midrule
B2 & 128 & \shortstack{1.86\\[-1pt]{\scriptsize [1.84, 1.87]}} & \shortstack{0.0454\\[-1pt]{\scriptsize [0.0453, 0.0459]}} & 40.76 & 1.79 & 0.992 \\[4pt]
B3 & 32 & \shortstack{0.591\\[-1pt]{\scriptsize [0.588, 0.601]}} & \shortstack{0.0673\\[-1pt]{\scriptsize [0.0655, 0.0697]}} & 8.78 & 1.83 & 0.738 \\[4pt]
D2 & 32 & \shortstack{0.0792\\[-1pt]{\scriptsize [0.0788, 0.0802]}} & \shortstack{0.00997\\[-1pt]{\scriptsize [0.00991, 0.011]}} & 7.90 & 1.45 & 1.000 \\[4pt]
F2 & 32 & \shortstack{0.181\\[-1pt]{\scriptsize [0.181, 0.184]}} & \shortstack{0.0183\\[-1pt]{\scriptsize [0.0182, 0.0183]}} & 9.90 & 1.73 & 0.455 \\[4pt]
H2 & 64 & \shortstack{1.53\\[-1pt]{\scriptsize [1.53, 1.53]}} & \shortstack{0.0808\\[-1pt]{\scriptsize [0.0806, 0.0817]}} & 18.95 & 1.26 & 0.810 \\[4pt]
Q4 & 8 & \shortstack{0.0229\\[-1pt]{\scriptsize [0.0229, 0.0237]}} & \shortstack{0.00964\\[-1pt]{\scriptsize [0.00945, 0.00966]}} & 2.43 & 1.62 & 1.045 \\[4pt]
R2 & 128 & \shortstack{5.61\\[-1pt]{\scriptsize [5.61, 5.66]}} & \shortstack{0.146\\[-1pt]{\scriptsize [0.145, 0.147]}} & 38.55 & 0.49 & 0.032 \\[4pt]
S3 & 16 & \shortstack{0.0518\\[-1pt]{\scriptsize [0.0517, 0.0561]}} & \shortstack{0.0125\\[-1pt]{\scriptsize [0.0125, 0.0127]}} & 4.13 & 1.64 & 0.953 \\[4pt]
X2 & 64 & \shortstack{1.43\\[-1pt]{\scriptsize [1.42, 1.43]}} & \shortstack{0.0628\\[-1pt]{\scriptsize [0.0627, 0.0636]}} & 22.72 & 0.62 & 0.035 \\[4pt]
\bottomrule\end{tabular}\end{table}

\begin{figure}[!htbp]
\centering
\includegraphics[width=\textwidth]{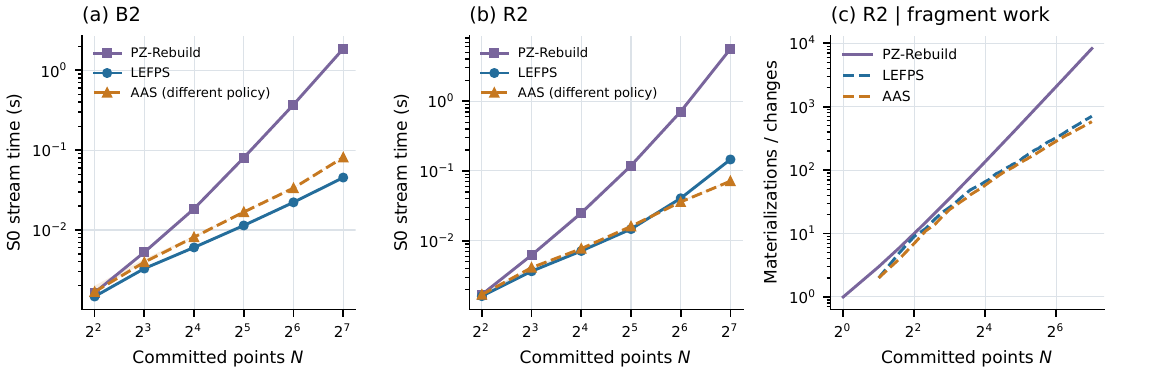}
\caption{Point-output (S0) timings on (a) the unit square (B2) and
(b) the rectangle $[0,2]\times[0,1]$ (R2), with medians and observed
ranges from three runs; (c) rectangle fragment work. PZ-Rebuild
reconstructs globally but shares LEFPS's ordered output; dashed AAS
curves follow a different policy.}
\label{fig:exp4}
\end{figure}

\subsection{Coordinate growth and arithmetic cost}
We measure total output length $L_N$ as in~\eqref{eq:output-size}
and maximum coordinate height $B_N=\max_{i\le N}\height(x_i)$.
On R2 at $N=256$, LEFPS gives $L_N=306785$ and $B_N=3937$.
For AAS, the corresponding values are $L_N=3480$ and $B_N=5$.
Figure~\ref{fig:exp5}(a)--(b) contrasts this observed growth with the
B2 sequences, whose coordinates remain short over the tested prefixes.

Separate profiles count candidate trials, edge tests, and quartic tests,
and record operand lengths in local computations and comparisons
involving nonlocal data. The counters overlap, and operand lengths are
recorded only for the instrumented operations. To isolate the effect of
operand length, we repeat a fixed batch of 32 LEFPS bisector computations
from R2 under seven rational scalings. These timings exclude input
preparation and output validation. Across seven repeated blocks, the
median time per call increases from 1.108 microseconds at an input
length of 15 bits to 4.313 microseconds at 2062 bits. This measures the
cost of the bisector routine, not its share of total sampling time.

\begin{figure}[!htbp]
\centering
\includegraphics[width=\textwidth]{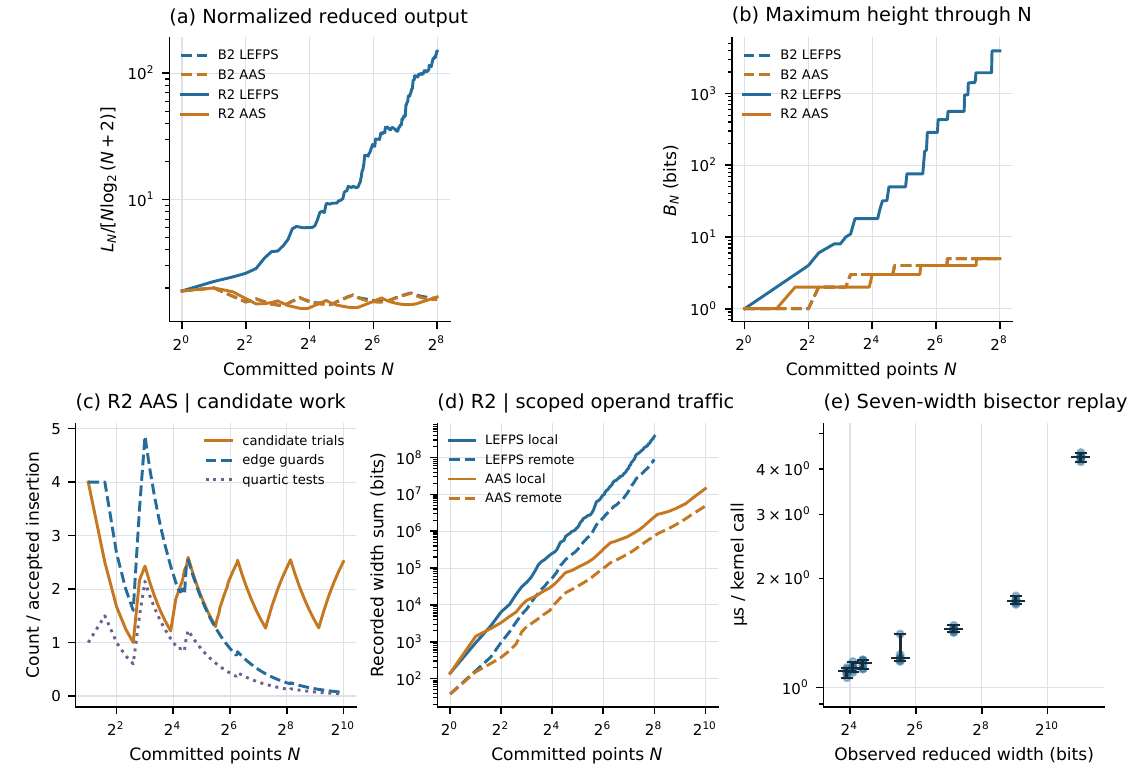}
\caption{Arithmetic diagnostics on the unit square (B2) and rectangle
$[0,2]\times[0,1]$ (R2): (a) normalized output length; (b) maximum
coordinate height; (c) candidate counts; (d) recorded operand widths;
(e) bisector-width replay. Panels (c)--(e) use R2. Profiles are separate
from timing runs; candidate counters overlap. Replay marks show seven
block observations, medians, and ranges.}
\label{fig:exp5}
\end{figure}

Table~\ref{tab:quality} reports mesh ratios and coordinate statistics at
independently checked prefixes. AAS has a smaller mesh ratio on F2 but a larger one on R2, X2, and H2;
its output is shorter on most listed inputs but longer on Q4. This is
consistent with its definition: AAS minimizes the length of the next
point within its current admissible pool, not the cumulative output
length over all possible sequences. The observed coordinate growth also
does not by itself establish an asymptotic growth rate.

\begin{table}[!htbp]\centering\small\setlength{\tabcolsep}{4pt}
\caption{Geometry and arithmetic at independently checked states. Entries are in
LEFPS/AAS order. Mesh ratios $\rho(P_n)=h_n/q(P_n)$ are rounded; $L_N$ and $B_N$ are exact.
These validation prefixes can precede the timing endpoints.}
\label{tab:quality}
\begin{tabular}{@{}lrcrr@{}}\toprule
Input & $N$ & $\rho$ & $L_N$ & $B_N$\\\midrule
R2 & 32 & 1.941 / 2.000 & 1623 / 234 & 50 / 3 \\
X2 & 32 & 1.943 / 2.000 & 2897 / 222 & 112 / 3 \\
B2 & 32 & 2.000 / 2.000 & 263 / 262 & 4 / 4 \\
H2 & 32 & 1.903 / 1.999 & 4944 / 2669 & 170 / 127 \\
F2 & 32 & 2.000 / 1.985 & 1502 / 683 & 56 / 33 \\
D2 & 32 & 2.000 / 2.000 & 255 / 255 & 4 / 4 \\
B3 & 16 & 2.000 / 2.000 & 183 / 131 & 9 / 5 \\
S3 & 16 & 2.000 / 2.000 & 129 / 123 & 3 / 3 \\
Q4 & 8 & 2.000 / 2.000 & 67 / 70 & 2 / 3 \\
\bottomrule\end{tabular}\end{table}

\subsection{Direct execution and preparation cost}
Each generic/DSE comparison uses the same selection rule and starts from
the same singleton: AAS on R2 and X2, and LEFPS on I1 and Q4. The two
implementations agree in ordered coordinates and all recorded quality
pairs. On R2 and X2, the refinement phases start after $P_6$ and $P_{12}$,
respectively; Q4 and I1 use the initial constructions described in
Section~\ref{sec:direct}. All initialization and preparation costs are
included.

\begin{figure}[!htbp]
\centering
\includegraphics[width=\textwidth]{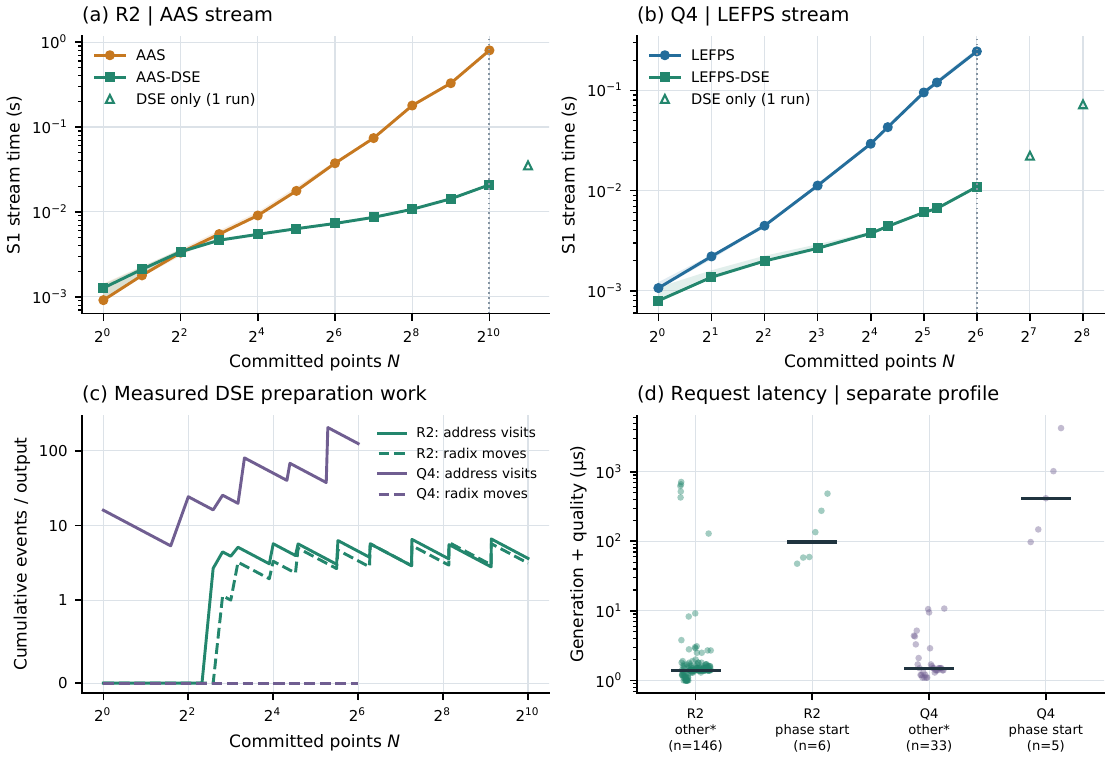}
\caption{Direct execution on the rectangle $[0,2]\times[0,1]$ (R2) and
four-dimensional ordered simplex (Q4): (a)--(b) point-and-quality (S1)
timings with five paired runs and single-run extensions (open triangles);
(c) preparation counts per output; (d) generation-and-quality latency.
Q4 generic queries scan occupied buckets.}
\label{fig:exp6}
\end{figure}

Across the four tested endpoints, the paired median generic/DSE time
ratios in Table~\ref{tab:dse} range from 7.07 to 38.23 for point-and-quality
output (S1). Including startup and native terminal-state export gives
ratios of 2.00--7.06. The single-run extensions in
Figure~\ref{fig:exp6}(a)--(b) are shown separately from the five-run comparisons.

Figure~\ref{fig:exp6}(c)--(d) shows cumulative preparation work per
output point and the time taken by individual requests. Panel~(c) uses
a linear scale near zero and a logarithmic scale above one. In
panel~(d), the group of requests that do not start a refinement phase
also includes initialization. Preparation is charged when performed,
even if some prepared points lie beyond the requested prefix. Q4
requires no radix sorting because the accepted grid indices are already
enumerated in lexicographic order. Requests that start a phase take
longer: the linear total-work bound does not give a constant worst-case
cost for each request. These latency measurements exclude serialization,
which is included in the S1 timings.

\begin{table}[!htbp]\centering\small\setlength{\tabcolsep}{4pt}
\caption{Same-policy DSE comparisons from the singleton, with five paired
repetitions. S1 times (seconds) and paired generic/DSE ratios are medians
[minimum, maximum]; full-cold ratios report medians only. Full cold
includes startup and native state export, whose contents differ between
methods. Q4 generic spatial queries scan occupied buckets.}
\label{tab:dse}
\begin{tabular}{@{}lrcccc@{}}\toprule
Input & $N$ & Generic S1 & DSE S1 & S1 ratio & Full-cold ratio\\\midrule
R2 & 1024 & \shortstack{0.799\\[-1pt]{\scriptsize [0.793, 0.834]}} & \shortstack{0.0209\\[-1pt]{\scriptsize [0.0201, 0.0211]}} & \shortstack{38.23\\[-1pt]{\scriptsize [37.69, 41.40]}} & 7.06 \\[4pt]
X2 & 512 & \shortstack{0.367\\[-1pt]{\scriptsize [0.366, 0.381]}} & \shortstack{0.0238\\[-1pt]{\scriptsize [0.0236, 0.0239]}} & \shortstack{15.54\\[-1pt]{\scriptsize [15.30, 16.04]}} & 3.68 \\[4pt]
I1 & 1024 & \shortstack{0.144\\[-1pt]{\scriptsize [0.142, 0.144]}} & \shortstack{0.0204\\[-1pt]{\scriptsize [0.0201, 0.0209]}} & \shortstack{7.07\\[-1pt]{\scriptsize [6.89, 7.13]}} & 2.00 \\[4pt]
Q4 & 64 & \shortstack{0.245\\[-1pt]{\scriptsize [0.242, 0.26]}} & \shortstack{0.0109\\[-1pt]{\scriptsize [0.0107, 0.011]}} & \shortstack{22.69\\[-1pt]{\scriptsize [21.99, 23.82]}} & 3.03 \\[4pt]
\bottomrule\end{tabular}\end{table}

\section{Discussion}\label{sec:discussion}
The two algorithms construct extensible rational designs with mesh ratio
at most two while making point-selection decisions exactly. LEFPS
preserves the prescribed farthest-point rule; AAS instead chooses a
shortest-coordinate candidate among those that preserve the mesh-ratio
bound. Their bit complexity is nearly linear in the total output length
$L_N$. Thus, the remaining arithmetic question is how large that output
can become. The interval result gives $L_N=\Theta_I(N\log(N+2))$,
but comparable bounds remain open for general higher-dimensional inputs.

\begin{problem}[Coordinate growth under LEFPS and AAS]
\label{prob:trajectory-arithmetic}
For $\pi\in\{\LEFPS,\AAS\}$, which verifiable conditions on a fixed
rational input $I$ ensure
\[
 L_N^\pi\le C_I N\log^{c_I}(N+2)
 \quad\text{for all }N,
\]
or the sharper bound $L_N^\pi=\Theta_I(N\log(N+2))$?
Are there fixed inputs in dimension at least two for which either
bound fails?
\end{problem}

Here $I$ includes the polytopal cover and rational initial point, and
coordinate lengths are measured after reduction. Theorem~\ref{thm:innovation}
gives sufficient conditions in terms of the construction depth and the
introduction and repeated use of denominator factors. Geometric quality
and dyadic coordinates alone do not control these quantities, as
Proposition~\ref{prop:long-dyadic} shows. The numerical differences between
LEFPS and AAS motivate this question but do not settle their asymptotic
growth.

A separate question concerns the best attainable complexity when the
selection rule may be chosen freely. The explicit sequences in
Section~\ref{sec:direct} admit direct generation with linear unit-cost
work and optimal bit complexity, including exact quality measures at
every prefix. How far can this be extended?

\begin{problem}[Optimal complexity on rational polytopal inputs]
\label{prob:optimal-complexity}
Which fixed rational polytopal inputs $I$ admit an exact extensible
construction that starts from the prescribed rational initial point,
satisfies $\rho(P_n)\le2$ for every $n\ge2$, and uses
\[
 O_I(N)\ \text{unit-cost work}
 \qquad\text{and}\qquad
 \Theta_I(N\log(N+2))\ \text{bit operations}
\]
at every stopping point $N$? The generator must operate without knowing
$N$ in advance. Costs include initialization, all preparation, explicit
reduced point output, and a terminal export of the state needed to
resume generation, under the conventions of Section~\ref{sec:background}.
The construction need not reproduce LEFPS or AAS.
\end{problem}

The direct constructions settle this question for the inputs treated in
Section~\ref{sec:direct}. Rational scaling can disrupt the AAS refinement
pattern (Proposition~\ref{prop:scaled-square}), but this does not exclude
an optimal construction on the scaled domain. More broadly, it remains
to identify the geometric and arithmetic properties that permit these
bounds, and whether their attainability can be decided from the rational
input description.

\section*{Declaration of competing interests}
The authors declare that they have no competing interests relevant to this work.

\section*{Funding}
The work of the second author was supported by JSPS KAKENHI Grant Number JP26K00620.

\section*{Declaration of generative AI use}
The authors used ChatGPT through ChatGPT for academic researchers (OpenAI) to assist with language editing and condensation of the manuscript. The authors reviewed and revised all AI-assisted edits, verified that the mathematical content and intended meaning were preserved, and take full responsibility for the final manuscript.

\appendix
\section{Details of the planar AAS constructions}\label{app:direct-proofs}

\subsection{Comparison with the remaining farthest points}
\label{app:planar-comparison}

\begin{proof}[Proof of the length comparison in Lemma~\ref{lem:rect-barrier}]
Let $f=\min_\kappa\Far(P)$ and let $x$ be an admissible nonfarthest
candidate. We prove $\lambda(x)>\lambda(f)$ using the original cover.

\emph{Localization.}
Put $s=2^{-k}$. Lemma~\ref{lem:rect-geometry} identifies $\Far(P)$
and $H=h(P)$ in each stated phase; grid distances certify the designs.

By Lemma~\ref{lem:focal},
\[
 \|x-f\|\le r=d_P(x)<H.
\]
The first inequality is strict whenever the domain contains a two-sided
segment through $f$. Otherwise, for one sign of a sufficiently small
$\varepsilon$, a displacement along that segment would satisfy
\[
 \|f+\varepsilon v-x\|^2
 =r^2+2\varepsilon(f-x)\cdot v+\varepsilon^2\|v\|^2>r^2,
\]
while continuity gives $d_P(f+\varepsilon v)>r$, contradicting
admissibility. Such a segment exists at centers, edge midpoints, and
missing grid vertices except convex rectangle corners; on a straight
side it is tangential.

For a common local description, put $u=x-f$ and use the following
similarity coordinates:
\[
\begin{array}{c|c}
 \text{type of }f&\xi=T(u)\\\hline
 \text{missing rectangle grid vertex}&(u_1/s,u_2/s)\\
 \text{missing square center}&((u_1+u_2)/s,(u_1-u_2)/s)\\
 \text{missing grid-edge midpoint}&(2u_1/s,2u_2/s).
\end{array}
\]
All selected sites have integer transformed coordinates, and $f$ is
the missing origin. The available adjacent axial sites are selected:
this follows from the edge-covering condition for a missing grid
vertex, and from the fixed baseline $G_k$ or $G_k\cup A_k$ in the
other cases. Comparing distances with these axial sites gives
\begin{equation}\label{eq:barrier-box}
 \xi\in(-1/2,1/2)^2
\end{equation}
intersected with the local domain. At a convex rectangle corner this
is replaced by the appropriate inward closed quadrant of
$[-1/2,1/2]^2$.
In input coordinates, the strict enclosures are respectively
\[
 \|u\|_\infty<s/2,\qquad
 |u_1|+|u_2|<s/2,\qquad
 \|u\|_\infty<s/4.
\]
At a cross boundary midpoint, $\|x-f\|<s/2$, and every original corner
is at least $s/2$ away. Thus the local domain is a half-plane.
The missing axial direction points outside it; both tangential directions
are available. Centers have all four axial neighbors, and convex rectangle
corners have the two inward neighbors.

\emph{Possible owners.}
Let $M=\|\xi\|_\infty<1/2$, and take an available axial site $e$ in
a signed coordinate direction attaining $M$. Then
\[
 \|\xi-e\|^2=\|\xi\|^2+1-2M.
\]
For a nonzero integer site $p$ that is not axial, $\|p\|_1\ge2$, and
\[
\begin{aligned}
 \|\xi-p\|^2-\|\xi-e\|^2
 &\ge\|p\|_2^2-1-2M(\|p\|_1-1)\\
 &=\bigl(\|p\|_2^2-\|p\|_1\bigr)
       +(\|p\|_1-1)(1-2M)>0.
\end{aligned}
\]
Thus only selected axial neighbors can own a point in the strict
enclosure, and their Voronoi boundaries there are diagonal rays from
the origin.
For the closed corner quadrant, $M\le1/2$ gives a nonnegative bound.
In inward coordinates all sites have nonnegative integer coordinates.
Equality for a nonaxial site is possible only for $p=(1,1)$ at
$\xi=(1/2,1/2)$. Away from that point an inward axial site is
strictly closer.

\emph{Original fragment edges.}
Any original input cell containing a candidate occurrence also contains
$f$. For centers, original facets not through $f$ are at least $s/2$
away in coordinate distance; for edge midpoints they are at least $s/2$
away, beyond the enclosure $\|u\|_\infty<s/4$. The rectangle has one cell.
Facets through $f$, including internal cross interfaces, restrict directions.
Together with the axial sites, they make $f$ the only fragment vertex in
the strict enclosure. Edges there are diagonal rays or rays on original
facets through $f$. A nonfarthest candidate must therefore be an edge
midpoint. We must locate its other endpoint in the original cell.

Consider a permitted diagonal ray $(t,t)$, $t>0$, on which $(1,0)$
and $(0,1)$ tie. A nonaxial integer site $p=(a,b)$ can meet this tie
only at
\begin{equation}\label{eq:barrier-ray}
 t=\frac{a^2+b^2-1}{2(a+b-1)}.
\end{equation}
A nonpositive denominator gives no positive finite intersection.
For a positive denominator, $0<t<1$ implies
$(a-1)^2+(b-1)^2<1$, so the only such event is $(a,b)=(1,1)$ at
$t=1/2$. At $t=1$ the possible new competitors are $(2,1)$ and
$(1,2)$.
If $(1,1)$ is unselected and no original facet has ended the edge
by $t=1$, both competitors are present and selected, as follows.

For a missing rectangle vertex, continuation beyond $(1,1)$ puts both
forward side levels at least at two. The edge-covering condition selects
$(2,1)$ and $(1,2)$, adjacent to the missing vertex $(1,1)$.
For a missing center, original sides are $\xi_1\pm\xi_2=c$ with odd $c$.
Continuation beyond $t=1$ puts the forward sum level at least at three,
and the transverse strip contains $[-1,1]$. Both competitors belong to
the cell and, by their opposite parities, to the selected $G_k$.
For a missing edge midpoint, integer side levels again include both
competitors, whose opposite parities put them in $G_k\cup A_k$.
These arguments apply separately to the original squares of the cross.

It remains to check earlier intersections with original facets.
For grid vertices and edge midpoints, a positive diagonal intersection
with an original side has integer $t$, so none occurs before one.
For square centers, a difference facet does not meet this ray, and
the forward sum facet meets it at $t=1/2$ or at least $3/2$.
In the latter case a selected-site event ends the edge by $t=1$.
Hence its first endpoint is always at $t=1/2$ or $t=1$.
The edge midpoint is $(1/4,1/4)$ or $(1/2,1/2)$, and only the former
lies in the strict enclosure. Reflections give the other permitted
signs.

On an axial original-boundary ray $(t,0)$, an integer site $(a,b)$
other than the axial owner could tie before $t=1$ only if
\[
 0<\frac{a^2+b^2-1}{2(a-1)}<1.
\]
For $a>1$ this would imply $(a-1)^2+b^2<0$, which is impossible;
for $a\le1$ there is no positive intersection before one.
Original facets not through $f$ also occur at integer axial distances.
Thus a boundary-edge midpoint lies outside the strict enclosure.
In the closed corner enclosure, the additional possibilities are an
inward axial midpoint at transformed distance $1/2$ and the diagonal
point $(1/2,1/2)$.

Returning to input coordinates gives the following exhaustive list
of necessary displacements. Signs must point into the domain; some
listed points may be absent or inadmissible.
\begin{center}
\small
\renewcommand{\arraystretch}{1.12}
\begin{tabularx}{\textwidth}{@{}>{\raggedright\arraybackslash}p{.28\textwidth}>{\raggedright\arraybackslash}p{.29\textwidth}>{\raggedright\arraybackslash}X@{}}
\toprule
Type of $f$ & Displacement $x-f$ & Boundary case\\
\midrule
Noncorner rectangle grid vertex & $(\pm s/4,\pm s/4)$ &
Strict enclosure, including straight sides.\\
Convex rectangle corner & Inward $(s/4,s/4)$, $(s/2)e_i$, or $(s/2,s/2)$ &
Closed inward enclosure.\\
Square center & $\pm(s/4)e_i$ &
Original facets have odd diagonal offsets after transformation.\\
Grid-edge midpoint & $(\pm s/8,\pm s/8)$ &
A boundary midpoint sees a half-plane; internal cover facets remain present.\\
\bottomrule
\end{tabularx}
\end{center}
Inward signs are positive at lower boundaries and negative at upper
boundaries. The list includes all possible original-edge midpoints
and the additional closed-corner vertices; it uses no fine-grid cuts.

\emph{Reduced coordinate lengths.}
Evaluate lengths in the original nonnegative coordinates.
For a reduced dyadic $v=a/2^j\ge0$ and $t>j$, every feasible change
$v\pm2^{-t}$ strictly increases its numerator-plus-denominator length.
For $v>0$, its new numerator $2^{t-j}a\pm1$ is positive, odd, and at
least $a$, while the denominator gains bits. For $v=0$, only the positive
change is feasible, with the same conclusion.

Grid-vertex coordinates have denominator exponents at most $k$,
whereas their listed changes use levels $k+1$ or $k+2$.
Center coordinates have exponents at most $k+1$ and changes at level
$k+2$; edge-midpoint coordinates have exponents at most $k+1$ and
changes at level $k+3$.
At least one coordinate changes, while unchanged coordinates retain
their lengths. Hence $\lambda(x)>\lambda(f)$ in every listed case.
The remaining farthest points are actual candidate vertices, proving
the assertion.
\end{proof}

\FloatBarrier
\subsection{The finite initial sequences}\label{app:planar-entry}

For the rectangle, Example~\ref{ex:aai-choice} gives $x_2=(1,1)$.
Before the asserted third point $(2,0)$, the earlier nonsite candidates
are $(0,1)$, $(1,0)$, $(0,1/2)$, and $(1/2,0)$.
They fail admissibility at $(2,0)$, $(2,1/2)$, $(2,0)$, and $(0,1)$,
respectively: both old-design and trial-point distances exceed the
insertion radius. The resulting $P_3$ meets every coarse grid edge,
so Lemma~\ref{lem:rect-barrier} forces the remaining points of
\eqref{eq:rect-start}. Their quality data are
\[
\begin{array}{c|ccccc}
 n&2&3&4&5&6\\\hline
 r_n^2&2&2&1&1&1\\
 h_n^2&2&1&1&1&1/2
\end{array}
\qquad h_1^2=5.
\]

For the cross, $h_1^2=5$ and $\min_\kappa\Far(P_1)=(2,3)$.
Inserting $(2,2)$ gives $r_2^2=h_2^2=2$.
The remaining farthest points are $(0,2),(2,0),(1,3),(3,1)$,
mutually separated by at least $\sqrt2$. After their insertion,
each coarse square has two opposite selected corners, and precisely
the six remaining vertices are farthest, at distance one.

To verify AAS's initial choices, set $f=\min_\kappa\Far(P)$.
Table~\ref{tab:crossfail} lists all candidates in $\|z-f\|<h(P)$
preceding the asserted choice, obtained from the original square sides
and the bisectors of the already selected points in \eqref{eq:crossstart}.
Each is rejected by the displayed witness. There are no earlier
candidates in this ball at steps 7--12. Lemma~\ref{lem:focal} excludes
all other admissible alternatives. Thus the squared covering radii are
\[
 (5,2,2,2,2,1,1,1,1,1,1,1/2),
\]
and the squared insertion radii are two at steps 2--6 and one at
steps 7--12.

\begin{table}[htbp]
\centering
\caption{Candidates preceding the asserted AAS choice during the initial
construction on the cross. In each row,
$d_{P_{n-1}}(w)^2>r_z^2$ and $\|w-z\|^2>r_z^2$, proving rejection.
All earlier candidates in the ball $\|z-f\|<h(P)$ are listed.}
\label{tab:crossfail}
\small
\begin{tabular}{@{}cccccc@{}}
\toprule
$n$ & Trial $z$ & $r_z^2$ & Witness $w$ & $d_{P_{n-1}}(w)^2$ & $\|w-z\|^2$\\
\midrule
2 & $(1,2)$ & $1$ & $(2,0)$ & $2$ & $5$\\
2 & $(1,3)$ & $4$ & $(3,2)$ & $5$ & $5$\\
2 & $(2,1)$ & $1$ & $(0,2)$ & $2$ & $5$\\
2 & $(1/2,2)$ & $5/4$ & $(2,0)$ & $2$ & $25/4$\\
2 & $(1,3/2)$ & $1/4$ & $(0,1)$ & $1$ & $5/4$\\
2 & $(3/2,1)$ & $1/4$ & $(0,1)$ & $1$ & $9/4$\\
3 & $(0,1)$ & $1$ & $(1,3)$ & $2$ & $5$\\
4 & $(1,0)$ & $1$ & $(1,3)$ & $2$ & $9$\\
5 & $(1,2)$ & $1$ & $(3,1)$ & $2$ & $5$\\
6 & $(2,1)$ & $1$ & $(3,3/2)$ & $5/4$ & $5/4$\\
\bottomrule
\end{tabular}
\end{table}

\subsection{Failure of the phase order under rational scaling}
\label{app:scaling-proof}

\begin{proof}[Proof of Proposition~\ref{prop:scaled-square}]
Put $j=k+1$, $a=\lfloor\log_2q\rfloor$,
$u=(2^j-1)/(q2^j)$, and $f=(u,u)$.
The congruence gives $\gcd(2^j-1,q)=1$.
Every center coordinate has numerator length at most $j$ and denominator
length at most $j+a+1$, and $u$ attains both bounds.
Thus $f$ is last in $\kappa$-order among the centers.

Consider $P=G_k\cup(A_k\setminus\{f\})$, where only the upper-right
fine square lacks its center.
The point $x=f+(s/4,0)$ is the midpoint of the restricted Voronoi
edge from $f$ to the right-side midpoint, so it belongs to the actual
candidate set. Its insertion and covering radii satisfy
\begin{equation}\label{eq:scaled-admissibility}
 d_P(x)^2=5s^2/16,\qquad
 h(P\cup\{x\})^2=169s^2/576<5s^2/16.
\end{equation}
To verify this exactly, translate $f$ to zero and divide by $s$.
The uncentered square is $[-1/2,1/2]^2$, with its four corners and
the new point $(1/4,0)$.
The new internal Voronoi vertices are $(-7/24,0)$ and $(0,\pm7/16)$;
the additional boundary intersections are $(1/2,\pm3/16)$.
Together with corners and side midpoints, their squared nearest-site
distances lie among
\[
 0,\quad 1/16, \quad 25/256,\quad 1/4,\quad 65/256,\quad 169/576.
\]
Hence the local squared covering radius is $169/576$.
Every other fine square is covered to distance $1/2$, and its sites
do not reduce the value at $(-7/24,0)$: the nearest neighboring
center is $17/24$ away and the neighboring grid vertices are farther.
This proves \eqref{eq:scaled-admissibility} on the whole domain.

Write $x^{(1)}$ for the first coordinate of $x$.
The divisibility assumption cancels the factor $q$:
\[
 x^{(1)}=\frac{2^{j+1}-1}{q2^{j+1}}
        =\frac{b}{2^{j+1}},\qquad
 b=\frac{2^{j+1}-1}{q}\text{ is odd},\qquad
 \bl(b)=j+1-a.
\]
The last identity follows from $2^a<q<2^{a+1}$ and $j\ge a$.
The numerator-plus-denominator lengths of $u$ and $x^{(1)}$ are
$2j+a+1$ and $2j+3-a$, respectively.
The second coordinate is unchanged, so
\begin{equation}\label{eq:scaled-saving}
 \lambda(f)-\lambda(x)=2a-2\ge2.
\end{equation}
If AAS inserted all centers before any other point, the remaining
centers would always be farthest and would be selected in
$\kappa$-order. It would therefore reach $P$ before choosing $f$.
At $P$, however, the strictly shorter admissible candidate $x$
prevents that choice. Any earlier departure also prevents completion
of the asserted center phase.
Finally, $2$ has finite multiplicative order modulo odd $q$, so
arbitrarily large multiples of this order supply $k+2$ satisfying
\eqref{eq:badlevels}.
\end{proof}


\end{document}